\newtheorem{lemma}{Lemma}[section]
\newtheorem{theorem}[lemma]{Theorem}
\providecommand{\N}{{\ensuremath{\mathbbm{N}}}}
\providecommand{\R}{{\ensuremath{\mathbbm{R}}}}
\providecommand{\E}{{\ensuremath{\mathbb{E}}}}
\renewcommand{\P}{{\ensuremath{\mathbb{P}}}}
\providecommand{\1}{{\ensuremath{\mathbbm{1}}}}
\newcommand{\diffns}[1]{d#1}
\providecommand{\Cb}[1]{{\mathcal{C}_{\mathrm{b}}^{#1}}}
\newcommand{\lpn}[3]{\mathcal{L}^{#1}(#2;#3)}
\newcommand{\lpnb}[3]{L^{#1}(#2;#3)}
\newcommand{\smallsum}{\textstyle\sum}
\newcommand{\smallprod}{\textstyle\prod}
\newcommand{\deltaset}[1]{\mathbb{D}_{#1}}
\newcommand{\stochval}[1]{\big[[#1]\big]}
\newcommand{\randval}[1]{|\![#1]\!|}
\newcommand{\randvalauto}[1]{\left|\!\!\left[#1\right]\!\!\right|}
\newcommand{\nzspace}[1]{{#1}\setminus\{0\}}
\title{On the differentiability of solutions of stochastic evolution equations with respect to their initial values}
\author{Adam Andersson, Arnulf Jentzen, Ryan Kurniawan, and Timo Welti}
\begin{document}

\maketitle

\begin{abstract}

In this article we study the differentiability of solutions of parabolic semilinear stochastic evolution equations (SEEs) with respect to their initial values.
We prove that if the nonlinear drift coefficients and the nonlinear diffusion coefficients of the considered SEEs are $n$-times continuously Fr\'{e}chet differentiable, 
then the solutions of the considered SEEs are also $n$-times continuously Fr\'{e}chet differentiable with respect to their initial values.
Moreover, a key contribution of this work is to establish suitable enhanced regularity properties of the derivative processes of the considered SEE in the sense that the dominating linear operator appearing in the SEE smoothes the higher order derivative processes.
\end{abstract}

\section{Introduction}

In this article we study the differentiability of solutions of parabolic semilinear stochastic evolution equations (SEEs) with respect to their initial values. (Semilinear) SEEs have been extensively studied in the last decades by means of several different approaches; see, e.g., the monographs by Rozovski{\u\i}~\cite{r90}, Pr{\'e}v{\^o}t \& R{\"o}ckner~\cite{PrevotRoeckner2007}, and Liu \& R{\"o}ckner~\cite{WeiRoeckner2015} for results on SEEs in the context of the so-called ``variational approach" for SEEs, see, e.g., Da Prato \& Zabczyk~\cite{dz92} for results on semilinear SEEs in the context of the so-called ``semigroup approach" for SEEs, and see, e.g., Walsh~\cite{Walsh1986} for results on semilinear SEEs in the context of the so-called ``martingale measure approach".
In this paper we employ the semigroup approach to establish differentiability of solutions of parabolic semilinear SEEs with respect to their initial values.
More precisely, we prove that the smoothness of the coefficients of the considered SEEs transfers to the smoothness of the solutions of the SEEs with respect to their initial values. 
We demonstrate that if the nonlinear drift coefficients and the nonlinear diffusion coefficients of the considered SEEs are $n$-times continuously Fr\'{e}chet differentiable, 
then the solutions of the considered SEEs are also $n$-times continuously Fr\'{e}chet differentiable with respect to their initial values.
In addition, a key contribution of this work is to establish suitable enhanced regularity properties of the derivative processes of the considered SEE in the sense that the dominating linear operator appearing in the SEE smoothes the higher order derivative processes (see~\eqref{eq:intro.regularity}--\eqref{eq:condition.delta} below).
In the following theorem we summarize some of the key findings of this article.
\begin{theorem}
\label{thm:intro}
Let 
$(H,\left\|\cdot\right\|_H,\langle\cdot,\cdot\rangle_H)$ 
and 
$
  (
    U,
    \left\| \cdot \right\|_U,
    \langle\cdot,\cdot\rangle_U
  )
$
be non-trivial separable $\R$-Hilbert spaces,
let 
$n\in\N=\{1,2,\ldots\}$,
$ T \in (0,\infty) $, 
$ \eta \in \R $, 
let 
$ F \colon H \to H $ and 
$ B \colon H \to HS(U,H) $ 
be $n$-times continuously Fr\'{e}chet differentiable functions with globally bounded derivatives, 
let
$
( \Omega , \mathcal{F}, \P )
$
be a probability space with a normal filtration 
$
( \mathcal{F}_t )_{ t \in [0,T] }
$,
let
$
  ( W_t )_{ t \in [0,T] }
$
be an $\operatorname{Id}_U$-cylindrical 
$
  ( \Omega , \mathcal{F}, \P, ( \mathcal{F}_t )_{ t \in [0,T] } )
$-Wiener process,
let
$
  A \colon D(A)
  \subseteq
  H \rightarrow H
$
be a generator of a strongly continuous analytic semigroup
with 
$
  \operatorname{spectrum}( A )
  \subseteq
  \{
    z \in \mathbb{C}
    \colon
    \operatorname{Re}( z ) < \eta
  \}
$,
let
$
  (
    H_r
    ,
    \left\| \cdot \right\|_{ H_r }
    ,
    \left< \cdot , \cdot \right>_{ H_r }
  )
$,
$ r \in \R $,
be a family of interpolation spaces associated to
$
  \eta - A
$
(cf., e.g., \cite[Section~3.7]{sy02}),
and for every 
$\mathcal{F}$/$\mathcal{B}(H)$-measurable
function $ X \colon \Omega \to H $ let 
$\randval{X}$ be the set given by 
$
  \randval{X}
  =
  \big\{ Y \colon \Omega \to H 
  \colon 
  ( Y \text{ is }\mathcal{F}\text{/}\mathcal{B}(H)\text{-measurable and } \P(X=Y) = 1  )
  \big\}
$.
Then  
\begin{enumerate}[(i)]
\item
\label{item:thm.base.existence}
there exist up-to-modifications unique
$ ( \mathcal{F}_t )_{ t \in [0,T] } $/$ \mathcal{B}(H) $-predictable stochastic processes
$ X^{0,x} \colon 
$
$
[0,T] \times \Omega \to H $, 
$ x \in H $, 
which fulfill for all
$ p \in [2,\infty) $, 
$ x \in H $, 
$ t \in [0,T] $
that
$
\int^t_0
\|e^{(t-s)A} F(X^{0,x}_s)\|_H
$
$
+
\|e^{(t-s)A} B(X^{0,x}_s)\|^2_{HS(U,H)}
\, ds
< \infty
$, 
$
  \sup_{ s \in [0,T] }
  \E\big[\|
    X_s^{0,x}
  \|^p_H
  \big]
  < \infty
$, 
and
\begin{equation}
\begin{split}
  \randval{{X_t^{0,x}}}
&=
    \randvalauto{{e^{tA} x+\int_0^t
    e^{ ( t - s ) A }
      F(X_s^{0,x})
    \, ds}}
+
  \int_0^t
    e^{ ( t - s ) A }
      B(X_s^{0,x})
  \, \diffns W_s
  ,
\end{split}
\end{equation}
\item
\label{item:thm.existence.derivatives}
it holds for all 
$ p \in [2,\infty) $, 
$ t \in [0,T] $ 
that 
$
  H \ni x \mapsto \randval{X^{0,x}_t} \in \lpnb{p}{\P}{H}
$ 
is $n$-times continuously Fr\'{e}chet differentiable with globally bounded derivatives,
\item
\label{item:thm.derivative.predictable}
there exist up-to-modifications unique
$ ( \mathcal{F}_t )_{ t \in [0,T] } $/$ \mathcal{B}(H) $-predictable stochastic processes
$
  X^{k,\mathbf{u}} \colon 
$
$
  [0,T] \times \Omega \to H
$, 
$
  \mathbf{u} \in H^{k+1}
$, 
$
  k \in \{1,2,\ldots,n\}
$,
which fulfill for all 
$ p \in [2,\infty) $, 
$
  k \in \{1,2,\ldots,n\}
$,
$ x, u_1, u_2, \ldots, u_k \in H $, 
$ t \in [0,T] $ 
that 
$
  \sup_{ s \in [0,T] }
  \E\big[ \|X^{k,(x,u_1,u_2,\ldots,u_k)}_s\|^p_H \big]
  < \infty
$ 
and 
\begin{equation}
\label{eq:intro.derivatives}
\begin{split}
&
  \big(
  \tfrac{d^k}{dx^k}
  \randval{X^{0,x}_t}
  \big) (u_1,u_2,\ldots,u_k)
  =
  \randval{X^{k,(x,u_1,u_2,\ldots,u_k)}_t}
  ,
\end{split}
\end{equation}
\item
\label{item:intro.regularity}
it holds for all 
$ p \in (0,\infty) $, 
$ k \in \{1,2,\ldots,n\} $, 
$ \delta_1, \delta_2, \ldots, \delta_k \in [0,\nicefrac{1}{2}) $
with 
$
  \sum^k_{i=1}
  \delta_i
  < \nicefrac{1}{2}
$
that 
\begin{equation}
\label{eq:intro.regularity}
  \sup_{ \mathbf{u}=(u_0,u_1,\ldots,u_k) \in H \times (\nzspace{H})^k }
  \sup_{ t \in (0,T] }
  \left[
  \frac{
    t^{ (\sum^k_{i=1} \delta_i) - \nicefrac{1}{2} \, \1_{[2,\infty)}(k)
      }
    \,
    \|
      X_t^{ k, \mathbf{u} }
    \|_{
      \mathcal{L}^p( \P; H )
    }
  }{  
    \prod_{ i = 1 }^k
    \left\| u_i \right\|_{ H_{ - \delta_i } }
  }
  \right]
  <
  \infty
  ,
\end{equation}
and
\item
\label{item:Lip}
it holds for all 
$ p \in (0,\infty) $, 
$ k \in \{ 1,2, \dots, n \} $,
$
  \delta_1, \delta_2, \dots, \delta_{ k } \in [0,\nicefrac{1}{2})
$ 
with 
$
  \sum^k_{i=1}
  \delta_i
  < \nicefrac{1}{2}
$, 
$
  |F|_{\operatorname{Lip}^k(H,H)}
  < \infty
$, 
and 
$
  |B|_{\operatorname{Lip}^k(H,HS(U,H))}
  < \infty
$
that 
\begin{equation}
\label{eq:intro.Lip}
  \sup_{\substack{
    x,y \in H,
    \\
    x\neq y
  }}
  \sup_{\mathbf{u}=(u_1,u_2,\dots,u_k)\in 
  (\nzspace{H})^k}
  \sup_{ t \in (0,T] }
  \left[
  \frac{
  t^{ (\sum^k_{i=1} \delta_i) - \nicefrac{1}{2}
  }
    \,
    \|
      X_t^{ k, ( x, \mathbf{u} ) }
      -
      X_t^{ k, ( y, \mathbf{u} ) }
    \|_{
      \mathcal{L}^p( \P; H )
    }
  }{
    \| x-y \|_{ H }
    \prod^k_{ i=1 }
    \|u_i\|_{ H_{ -\delta_i } }
  }
  \right]
  < \infty.
\end{equation}
\end{enumerate}
\end{theorem}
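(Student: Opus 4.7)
Part (i) follows from the standard Banach fixed-point argument for mild solutions of parabolic semilinear SEEs with globally Lipschitz coefficients (our setting, since $F$ and $B$ have globally bounded derivatives), in the space of $(\mathcal{F}_t)_{t\in[0,T]}$-predictable $\lpnb{p}{\P}{H}$-valued processes; see, e.g., Da Prato \& Zabczyk~\cite{dz92}. The remaining parts (ii)--(v) I would establish simultaneously by strong induction on $k\in\{1,2,\ldots,n\}$. Differentiating the mild equation formally $k$ times via Fa\`a di Bruno's formula identifies the candidate derivative process $X^{k,\mathbf{u}}$, $\mathbf{u}=(x,u_1,\ldots,u_k)$, as the solution of the linear mild SEE
\begin{equation*}
\begin{split}
  X^{k,\mathbf{u}}_t
  =\, &\1_{\{1\}}(k)\, e^{tA} u_1
  + \int_0^t e^{(t-s)A} \sqb{ F'(X^{0,x}_s)\, X^{k,\mathbf{u}}_s + \Phi^{F,k}_s } \diff{s}
  \\
  &+ \int_0^t e^{(t-s)A} \sqb{ B'(X^{0,x}_s)\, X^{k,\mathbf{u}}_s + \Phi^{B,k}_s } \diffns W_s,
\end{split}
\end{equation*}
where $\Phi^{F,k}_s$ and $\Phi^{B,k}_s$ are explicit multilinear combinations of $F^{(j)}(X^{0,x}_s)$ and $B^{(j)}(X^{0,x}_s)$ with $2\le j\le k$ applied to tuples of lower-order derivative processes $X^{k_i,\cdot}_s$ with $1\le k_i<k$ and $k_1+\cdots+k_j=k$.

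The inductive core is the regularity estimate~\eqref{eq:intro.regularity}. Assuming it holds for all $k'<k$, I would establish existence of $X^{k,\mathbf{u}}$ together with~\eqref{eq:intro.regularity} at level $k$ via a Banach fixed-point argument in the space of predictable processes endowed with the weighted norm $Y \mapsto \sup_{t\in(0,T]} t^{\gamma_k} \norm{Y_t}_{\lpnb{p}{\P}{H}}$, where $\gamma_k := \smallsum_{i=1}^k \delta_i - \tfrac{1}{2}\,\1_{[2,\infty)}(k)$. The main analytic tools are the interpolation smoothing bound $\norm{e^{tA} v}_H \lesssim t^{-\delta}\norm{v}_{H_{-\delta}}$, the Burkholder--Davis--Gundy inequality for stochastic convolutions, and beta-integral bounds that stay finite exactly under the threshold $\smallsum \delta_i < \nicefrac{1}{2}$. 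For $k=1$ the rate $t^{-\delta_1}$ from the initial term $e^{tA}u_1$ is binding; for $k\ge 2$ this initial term is absent, and the It\^o isometry yields an extra half-power of time regularity from the stochastic convolution, accounting for the $-\nicefrac{1}{2}$ offset in $\gamma_k$. A Gronwall step absorbs the feedback terms $F'(X^{0,x}_s) X^{k,\mathbf{u}}_s$ and $B'(X^{0,x}_s) X^{k,\mathbf{u}}_s$, while the inductive hypothesis applied to each factor in $\Phi^{F,k}$ and $\Phi^{B,k}$ controls the source contributions in the weighted norm.

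Parts (ii) and (iii) then follow from the usual difference-quotient argument: for $k=1$ the process $h \mapsto h^{-1}(\randval{X^{0,x+hu}_t}-\randval{X^{0,x}_t}) - \randval{X^{1,(x,u)}_t}$ solves an SEE whose inhomogeneity vanishes in $\lpnb{p}{\P}{H}$ as $h\to 0$ by continuity of $F'$ and $B'$ together with dominated convergence, and iterating this at higher orders yields (iii) with continuity of the $k$-th Fr\'echet derivative. Part (v) is obtained by writing a linear SEE for $X^{k,(x,\mathbf{u})}-X^{k,(y,\mathbf{u})}$ driven by terms of the form $F^{(j)}(X^{0,x})-F^{(j)}(X^{0,y})$ and analogous $B$-differences, then bounding it with~\eqref{eq:intro.regularity} at shifted interpolation orders together with the Lipschitz bounds $|F|_{\operatorname{Lip}^k},|B|_{\operatorname{Lip}^k}<\infty$. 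The main obstacle is the combinatorial bookkeeping in the inductive step for~\eqref{eq:intro.regularity}: for each term in the Fa\`a di Bruno expansions one must verify that the product of inductive singularities of its factors combines, after convolution through the semigroup, to exactly $s^{-\gamma_k}$, consistently with the sharp threshold $\smallsum\delta_i<\nicefrac{1}{2}$.
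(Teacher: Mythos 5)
Your plan follows essentially the same route as the paper: part (i) by the standard fixed-point argument, the derivative processes identified via the Fa\`a di Bruno/partition combinatorics as solutions of linear mild SEEs, the regularity estimate~\eqref{eq:intro.regularity} proved by induction in weighted norms $\sup_{t}t^{\gamma_k}\|\cdot\|_{\mathcal{L}^p(\P;H)}$ with semigroup smoothing, Burkholder--Davis--Gundy, beta-function integrals under the threshold $\sum_i\delta_i<\nicefrac{1}{2}$, and a Gronwall absorption of the first-order feedback terms (the paper packages this as Theorem~2.9 and Proposition~2.7 of its companion article), and parts (ii), (iii), and (v) by difference quotients and a linear SEE for $X^{k,(x,\mathbf{u})}-X^{k,(y,\mathbf{u})}$. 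Your accounting for the $-\nicefrac{1}{2}$ offset when $k\geq 2$ (absence of the $e^{tA}u_k$ term plus the half-power gained by the stochastic convolution) is exactly the mechanism behind the paper's exponent $\iota^{\boldsymbol{\delta}}_J$, so the proposal is correct and matches the paper's argument.
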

In Theorem~\ref{thm:intro}
we denote for non-trivial
$ \R $-Banach spaces
$
( V, \left\| \cdot \right\|_V ) 
$
and 
$
( W, \left\| \cdot \right\|_W ) 
$, 
a natural number $k\in\N$, 
and a $k$-times continuously differentiable function $f\colon V \to W$
by
$ |f|_{\operatorname{Lip}^k( V, W )} $
the $k$-Lipschitz semi-norm associated to $f$
(see~\eqref{eq:Lip.def} in Subsection~\ref{sec:notation} below for details).
Theorem~\ref{thm:intro} is an immediate consequence of items~\eqref{item:lem_derivative:existence}, \eqref{item:lem_derivative:a_priori}, \eqref{item:lem_derivative:a_priori_Lip}, \eqref{item:time.derivative.smoothness}, and~\eqref{item:time.derivative.representation} of Theorem~\ref{lem:derivative_processes} below.
In Theorem~\ref{lem:derivative_processes} below we also specify explicitly
for every natural number $k\in\N$ 
the SEEs which the $k$-th derivative processes in~\eqref{eq:intro.derivatives} above are solutions of (see item~\eqref{item:lem_derivative:existence} of Theorem~\ref{lem:derivative_processes} below for details). 
Moreover, Theorem~\ref{lem:derivative_processes} below provides explicit bounds for the left hand sides of~\eqref{eq:intro.regularity} and~\eqref{eq:intro.Lip} (see items~\eqref{item:lem_derivative:a_priori} and~\eqref{item:lem_derivative:a_priori_Lip} of Theorem~\ref{lem:derivative_processes} below) 
in a more general framework than in Theorem~\ref{thm:intro} above and establishes several further regularity properties for the derivative processes in item~\eqref{item:thm.derivative.predictable} of Theorem~\ref{thm:intro}.
Next we would like to emphasize that Theorem~\ref{thm:intro} and Theorem~\ref{lem:derivative_processes}, respectively, prove finiteness of~\eqref{eq:intro.regularity} and~\eqref{eq:intro.Lip} even though the denominators in~\eqref{eq:intro.regularity} and~\eqref{eq:intro.Lip} contain rather weak norms from negative Sobolev-type spaces for the multilinear arguments of the derivative processes.
In particular, item~\eqref{item:intro.regularity} of Theorem~\ref{thm:intro}
and item~\eqref{item:lem_derivative:a_priori} of Theorem~\ref{lem:derivative_processes} below, respectively, 
reveal for every
$ p \in [1,\infty) $, 
$ k \in \{1,2,\ldots,n\} $, 
$ \delta_1, \delta_2, \ldots, \delta_k \in [0,\nicefrac{1}{2}) $, 
$ x \in H $
that the derivative processes
$
  \big(
  H^k \ni (u_1,u_2,\ldots,u_k)
  \mapsto
  \randval{X^{k,(x,u_1,u_2,\ldots,u_k)}_t} \in \lpnb{p}{\P}{H} 
  \big)
  \in L( H^{\otimes k}, \lpnb{p}{\P}{H} )
$, 
$ t \in (0,T] $, 
even take values in the continuously embedded subspace 
\begin{equation}
\label{eq:rough.space}
  L( \otimes^k_{i=1} H_{-\delta_i}, \lpnb{p}{\P}{H} )
\end{equation} 
of 
$
  L( H^{\otimes k}, \lpnb{p}{\P}{H} )
$
provided that the hypothesis 
\begin{equation}
\label{eq:condition.delta}
  \smallsum^k_{i=1} \delta_i
  < \nicefrac{1}{2}
\end{equation}
is satisfied.
Items~\eqref{item:intro.regularity}--\eqref{item:Lip} of Theorem~\ref{thm:intro} 
and items~\eqref{item:lem_derivative:a_priori} and~\eqref{item:lem_derivative:a_priori_Lip} of Theorem~\ref{lem:derivative_processes} below, respectively,  are of major importance for establishing essentially sharp probabilistically \emph{weak convergence rates} for numerical approximation processes
as the analytically weak norms for the multilinear arguments of the derivative processes (see the denominators in~\eqref{eq:intro.regularity} and~\eqref{eq:intro.Lip} above) translate in analytically weak norms for the approximation errors in the probabilistically weak error analysis which, in turn, result in essentially sharp probabilistically weak convergence rates for the numerical approximation processes
(cf., e.g., 
Theorem~2.2 in Debussche~\cite{Debussche2011},
Theorem~2.1 in Wang \& Gan~\cite{WangGan2013_Weak_convergence}, 
Theorem~1.1 in Andersson \& Larsson~\cite{AnderssonLarsson2015},
Theorem~1.1 in Br\'{e}hier~\cite{Brehier2014},
Theorem~5.1 in Br\'{e}hier \& Kopec~\cite{BrehierKopec2016},
Corollary~1 in Wang~\cite{Wang2016481},
Corollary~5.2 in Conus et al.~\cite{ConusJentzenKurniawan2014arXiv}, 
Theorem~6.1 in Kopec~\cite{Kopec2014_PhD_Thesis},
and
Corollary~8.2 in~\cite{JentzenKurniawan2015arXiv}).
In the following we briefly relate 
items~\eqref{item:thm.base.existence}--\eqref{item:Lip} of Theorem~\ref{thm:intro} and
Theorem~\ref{lem:derivative_processes} below with results from the literature. 
Item~\eqref{item:thm.base.existence} of Theorem~\ref{thm:intro} is well-known and can, e.g., be found in Theorem~7.4 in Da Prato \& Zabczyk~\cite{dz92} (cf., e.g., 
Theorem 4.3 in Brze{\'z}niak~\cite{b97b},
Theorem~7.3.5 in Da Prato \& Zabczyk~\cite{dz02b}, 
Theorem 6.2 in Van Neerven et al.~\cite{vvw08}, 
and Theorem~6.2.3 in Liu \& R\"{o}ckner~\cite{WeiRoeckner2015}).
Items~\eqref{item:thm.existence.derivatives}--\eqref{item:thm.derivative.predictable} of Theorem~\ref{thm:intro} and items~\eqref{item:lem_derivative:existence}, \eqref{item:lem_derivative:smoothness}, and~\eqref{item:lem_derivative:representation} of Theorem~\ref{lem:derivative_processes} below are generalizations and enhancements of Theorem~7.3.6 in Da Prato \& Zabczyk~\cite{dz02b}.
In particular, we allow $F$ and $B$ to grow linearly (cf.\ \eqref{eq:Cb.def} in Subsection~\ref{sec:notation} below), 
we prove continuous Fr\'{e}chet differentiability (cf.\ item~\eqref{item:thm.existence.derivatives} of Theorem~\ref{thm:intro}), 
and we develop the combinatorics (cf., e.g., Theorem~2 in Clark \& Houssineau~\cite{ClarkHoussineau2013}) to explicitly specify the SEEs to which the derivative processes of any order are solutions of (cf.\ item~\eqref{item:lem_derivative:existence} of Theorem~\ref{lem:derivative_processes} below).
Nonetheless, the main contribution of this paper is to establish that the derivative processes even take values in the space~\eqref{eq:rough.space} provided that the assumption~\eqref{eq:condition.delta} is fulfilled.

\subsection{Notation}
\label{sec:notation}

In this section we introduce some of the notation which we employ throughout this article
(cf., e.g., Section~1.1 in~\cite{AnderssonJentzenKurniawan2016arXiv}).
For two
measurable spaces
$
  ( A, \mathcal{A} )
$
and
$
  ( B, \mathcal{B} )
$
we denote by
$
  \mathcal{M}( \mathcal{A}, \mathcal{B} )
$
the set of
$ \mathcal{A} $/$ \mathcal{B} $-measurable
functions.
For a set $ A $ 
we denote by 
$ \mathcal{P}(A) $ the power set of $ A $
and we denote by 
$ \#_A \in \N_0 \cup \{\infty\} $ 
the number of elements of $ A $.
For an $ \R $-vector space $ V $
we denote by 
$ V^{[k]} \subseteq V $, $ k \in \N_0 $, 
the sets which satisfy for all $ k \in \N $ 
that $ V^{[0]} = V $ 
and 
$ V^{[k]} = V \backslash \{ 0 \} $.
For a real number $ T \in (0,\infty) $, 
a set $ \Omega $, 
and a family 
$ (\mathcal{F}_t)_{ t \in [0,T] } \subseteq \mathcal{P}(\mathcal{P}(\Omega)) $ 
of sigma-algebras on $\Omega$ 
we denote by
$
  \mathrm{Pred}( ( \mathcal{F}_t )_{ t \in [0,T] } )
$
the sigma-algebra given by
$
  \mathrm{Pred}( ( \mathcal{F}_t )_{ t \in [0,T] } )
  =
  \sigma_{ [0,T] \times \Omega }\big(
    \big\{
      ( s, t ] \times A
      \colon
      s \in [0,T), t \in (s,T], 
      A \in \mathcal{F}_s
    \big\}
    \cup
    \big\{
      \{ 0 \} \times A
      \colon
      A \in \mathcal{F}_0
    \big\}
  \big)
$
(the predictable sigma-algebra associated
to
$
  ( \mathcal{F}_t )_{ t \in [0,T] }
$).
For $ \R $-Banach spaces
$ ( V , \left\| \cdot \right\|_V ) $
and
$ ( W , \left\| \cdot \right\|_W ) $
with 
$ \#_V > 1 $
and a natural number $ n \in \N $
we denote by
$
  \left| \cdot \right|_{ \Cb{n}( V, W ) }
  \colon
  \mathcal{C}^n( V, W ) \to [0,\infty]
$
and
$
  \left\| \cdot \right\|_{ \Cb{n}( V, W ) }
  \colon
  \mathcal{C}^n( V, W ) \to [0,\infty]
$
the functions which satisfy 
for all $ f \in \mathcal{C}^n( V, W ) $
that
\begin{equation}
\label{eq:Cb.def}
\begin{split}
  \left| f \right|_{
    \Cb{n}( V, W )
  }
& =
  \sup_{
    x \in V
  }
  \left\|
    f^{ (n) }( x )
  \right\|_{
    L^{ (n) }( V, W )
  }
  ,
\qquad
  \left\| f \right\|_{
    \Cb{n}( V, W )
  }
  =
  \|f(0)\|_W
  +
  \sum_{ k = 1 }^n
  \left| f \right|_{ \Cb{k}(V,W) }
\end{split}
\end{equation}
and we denote by
$
  \Cb{n}( V, W )
$
the set given by
$
  \Cb{n}( V, W ) =
  \{ f \in \mathcal{C}^n( V, W ) \colon \left\| f \right\|_{ \Cb{n}( V, W ) } < \infty \}
$.
For $ \R $-Banach spaces
$ ( V , \left\| \cdot \right\|_V ) $
and
$ ( W , \left\| \cdot \right\|_W ) $
with
$ \#_V > 1 $
and a nonnegative integer $ n \in \N_0 $
we denote by
$
  \left| \cdot \right|_{
    \operatorname{Lip}^n( V, W )
  }
  \colon 
  \mathcal{C}^n( V, W )
  \to [0,\infty]
$
and 
$
  \left\| \cdot \right\|_{
    \operatorname{Lip}^n( V, W )
  }
  \colon 
  \mathcal{C}^n( V, W )
  \to [0,\infty]
$
the functions which satisfy
for all $ f \in \mathcal{C}^n( V, W ) $
that
\begin{equation}
\label{eq:Lip.def}
\begin{split}
  \left| f \right|_{ 
    \operatorname{Lip}^n( V, W )
  }
  &=
\begin{cases}
  \sup_{ 
    \substack{
      x, y \in V ,\,
      x \neq y
    }
  }
  \left(
  \frac{
    \left\| f( x ) - f( y ) \right\|_W
  }{
    \left\| x - y \right\|_V
  }
  \right)
&
  \colon
  n = 0
\\
  \sup_{ 
    \substack{
      x, y \in V ,\,
      x \neq y
    }
  }
  \left(
  \frac{
    \| f^{ (n) }( x ) - f^{ (n) }( y ) \|_{ L^{ (n) }( V, W ) }
  }{
    \left\| x - y \right\|_V
  }
  \right)
&
  \colon
  n \in \N
\end{cases}
  ,
\\
  \left\| f \right\|_{
    \operatorname{Lip}^n( V, W )
  }
  &
  =
  \|f(0)\|_W
  +
  \sum_{ k = 0 }^n
  \left| f \right|_{ \operatorname{Lip}^k(V,W) }
\end{split}
\end{equation}
and we denote by
$
  \operatorname{Lip}^n( V, W )
$
the set given by
$
  \operatorname{Lip}^n( V, W ) =
  \{ f \in \mathcal{C}^n( V, W ) \colon \left\| f \right\|_{ \operatorname{Lip}^n( V, W ) } < \infty \}
$.
For an $ \R $-Hilbert space
$ ( H , \left\| \cdot \right\|_H , \left< \cdot , \cdot \right>_H ) $,
real numbers $ r \in [0,1] $, $ \eta \in \R $, 
$ T \in (0,\infty) $,
and a generator of a strongly continuous analytic semigroup
$
  A \colon D(A)
  \subseteq
  H \rightarrow H
$
with 
$
  \operatorname{spectrum}( A )
  \subseteq
  \{
    z \in \mathbb{C}
    \colon
    \text{Re}( z ) < \eta
  \}
$
we denote by
$
  \chi^{ r, T }_{
    A, \eta
  }
  \in [0,\infty)
$
the real number 
given by
$
  \chi^{ r, T }_{ A, \eta }
  =
  \sup_{ t \in (0,T] }
    t^r
    \,
    \|
      ( \eta - A )^r
      e^{ t A }
    \|_{ L( H ) }
$
(cf., e.g., \cite[Lemma~11.36]{rr93}).
We denote by 
$
  \mathbbm{B} \colon (0,\infty)^2 \to (0,\infty)
$
the function which satisfies
for all $ x, y \in (0,\infty) $ that
$
  \mathbbm{B}( x, y ) = \int_0^1 t^{ (x - 1) } 
  \left( 1 - t \right)^{ (y - 1) }
  \diffns t
$ 
(Beta function).
We denote by 
$
  \mathrm{E}_{ \alpha, \beta } \colon [0,\infty) \to [0,\infty)
$,
$ \alpha, \beta \in (-\infty,1) $,
the functions which satisfy for all
$
  \alpha, \beta \in (-\infty,1)
$,
$
  x \in [0,\infty)
$
that
$
  \mathrm{E}_{ \alpha, \beta }[ x ]
  =
  1
  +
  \sum_{ n = 1 }^{ \infty }
  x^n
  \prod_{ k = 0 }^{ n - 1 }
  \mathbb{B}\big(
    1-\beta
    ,
    k(1-\beta) + 1-\alpha
  \big)
$
(generalized exponential function; cf.\ Exercise~3 in Chapter~7 in Henry~\cite{h81},
(1.0.3) in Chapter~1 in Gorenflo et al.~\cite{Gorenfloetal2014}, 
and~(16) in~\cite{AnderssonJentzenKurniawan2016arXiv}). 
For real numbers
$ T \in (0,\infty) $, 
$ \eta \in \R $, 
$ p \in [1,\infty) $, 
$ a \in [0,1) $, 
$ b \in [0, \nicefrac{ 1 }{ 2 } ) $, 
$ \lambda \in (-\infty,1) $,
an $ \R $-Hilbert space
$ 
  ( H , \left\| \cdot \right\|_H , \left< \cdot , \cdot \right>_H ) 
$, 
and a generator
$ A \colon D(A) \subseteq H \to H $
of a strongly continuous analytic semigroup with
$
  \operatorname{spectrum}( A )
  \subseteq
  \{
    z \in \mathbb{C}
    \colon
    \text{Re}( z ) < \eta
  \}
$
we denote by 
$
  \Theta_{ A, \eta, p, T }^{ a, b, \lambda }
  \colon
  [0,\infty)^2
  \to
  [0,\infty]
$
the function which satisfies
for all
$ L, \hat{L} \in [0,\infty) $
that
{\small
\begin{equation}
\begin{split}
&
  \Theta_{ A , \eta, p, T }^{ a, b, \lambda }( L, \hat{L} )
  =
\\ &
\begin{cases}
  \sqrt{2}
  \,
  \bigg|
    E_{
      2 \lambda, \max\{ a, 2 b \}
    }\bigg[
      \Big|
        \tfrac{
        \chi_{ A , \eta }^{ a , T }\,
        L\,
          \sqrt{2}\,
          T^{ (1 - a) }
        }
        {
          \sqrt{1-a}
        }
        +
        \chi_{A,\eta}^{b,T}\,
        \hat{L}\,
        \sqrt{ p \, ( p - 1 ) \, T^{ (1 - 2b) } }
      \Big|^2
    \bigg]
  \bigg|^{1/2}
&
  \colon
  ( \lambda, \hat{L} )
  \in ( -\infty, \frac{ 1 }{ 2 } ) \times (0,\infty)
\\[1ex]
  E_{\lambda,a}\!\left[
    \chi_{ A , \eta }^{ a , T }\,
    L\,
    T^{ (1 - a) }
  \right]
&
  \colon
  \hat{L} = 0
\\[1ex]
  \infty
&
  \colon
  \text{otherwise}
\end{cases}
  .
\end{split}
\end{equation}
}We denote by  
$ \Pi_k, \Pi^*_k \in 
  \mathcal{P}\big(\mathcal{P}\big(
    \mathcal{P}( \N )
  \big)\big)
$, 
$ k \in \N_0 $,
the sets which satisfy for all $ k \in \N $
that
$ \Pi_0 = \Pi_0^{ * } = \emptyset $,
$
  \Pi_k^{ * } =
  \Pi_k \backslash
  \big\{ 
    \{ \{ 1, 2, \dots, k \} \}
  \big\}
$,
and
\begin{equation}
  \Pi_k =
  \big\{
    A \subseteq \mathcal{P}( \N )
    \colon
    \left[
      \emptyset \notin A
    \right]
    \wedge
    \left[
      \cup_{ a \in A }
      a
      =
      \left\{ 1, 2, \dots, k \right\}
    \right]
    \wedge
    \left[
      \forall \, a, b \in A \colon
      \left(
        a \neq b
        \Rightarrow
        a \cap b = \emptyset
      \right)
    \right]
  \big\}
\end{equation}
(cf., e.g., \cite[Theorem~2]{ClarkHoussineau2013}).
Observe, for example, that 
$
  \Pi_0 = \emptyset
$,
$
  \Pi_1 = \big\{ \{ \{ 1 \} \} \big\}
$,
$
  \Pi_2 = \big\{ \{ \{ 1, 2 \} \} , \{ \{ 1 \} , 
$
$
  \{ 2 \} \} \big\} 
$,
and
$
  \Pi_3 = 
  \big\{ 
    \{ \{ 1, 2, 3 \} \} , 
    \{ \{ 1, 2 \}, \{ 3 \} \} ,
    \{ \{ 1, 3 \} , \{ 2 \} \} ,
    \{ \{ 1 \} , \{ 2, 3 \} \} ,
    \{ \{ 1 \} , 
$
$
    \{ 2 \}, \{ 3 \} \} 
  \big\} 
$
and note that for every
$ k \in \N $
it holds that $ \Pi_k $ is the set of all partitions of
$ \{ 1, 2, \dots, k \} $.
For a natural number $ k \in \N $
and a set $ \varpi \in \Pi_k $
we denote by
$
  I^\varpi_1 , I^\varpi_2, \dots , I^\varpi_{ \#_\varpi }
  \in
  \varpi
$
the sets which satisfy that 
$
  \min\!\big( I^\varpi_1 \big) < 
  \min\!\big( I_2^\varpi \big) < \dots < 
  \min\!\big( 
    I_{ \#_\varpi }^{ \varpi } 
  \big)
$.
For 
a natural number
$ k \in \N $,
a set
$ \varpi \in \Pi_k $,
and a natural number
$ i \in \{ 1, 2, \dots, \#_\varpi \} $
we denote by
$
  I_{ i, 1 }^\varpi ,
  I_{ i, 2 }^\varpi ,
  \dots,
  I_{ i, \#_{ I_i^{ \varpi } } }^\varpi
  \in
  I_i^{ \varpi }
$
the natural numbers which satisfy that
$
  I_{ i, 1 }^\varpi < I_{ i, 2 }^\varpi < \dots < I_{ i, \#_{ I_i^{ \varpi } } }^\varpi
$.
For a measure space $ ( \Omega , \mathcal{F}, \mu ) $,
a measurable space $ ( S , \mathcal{S} ) $,
a set $ R $, 
and a function
$ f \colon \Omega \to R $
we denote by
$
   \left[ f \right]_{
     \mu, \mathcal{S}
   }
$
the set given by
\begin{equation}
   \left[ f \right]_{
     \mu, \mathcal{S}
   }
   =
   \left\{
     g \in \mathcal{M}( \mathcal{F}, \mathcal{S} )
     \colon
     (
     \exists \, A \in \mathcal{F} \colon
     \mu(A) = 0 
     \text{ and }
     \{ \omega \in \Omega \colon f(\omega) \neq g(\omega) \}
     \subseteq A
     )
   \right\}
   .
\end{equation}

\section{Stochastic evolution equations with smooth coefficients}
\label{sec:SEE_smooth_coefficients}

\subsection{Setting}
\label{sec:global_setting}

Let
$ T \in (0,\infty) $,
$ \eta \in \R $,
let
$
  \left(
    H,
    \left\| \cdot \right\|_H,
    \left< \cdot, \cdot \right>_H
  \right)
$
and
$
  \left(
    U,
    \left\| \cdot \right\|_U ,
    \left< \cdot, \cdot \right>_U
  \right)
$
be separable $ \R $-Hilbert spaces
with $ \#_H > 1 $,
let
$
  ( \Omega , \mathcal{F}, \P )
$
be a probability space with a normal filtration 
$
  ( \mathcal{F}_t )_{ t \in [0,T] }
$,
let
$
  ( W_t )_{ t \in [0,T] }
$
be an $ \operatorname{Id}_U $-cylindrical 
$ ( \Omega, \mathcal{F}, \P, ( \mathcal{F}_t )_{ t \in [0,T] } ) $-Wiener process,
let
$
  A \colon D(A)
  \subseteq
  H \rightarrow H
$
be a generator of a strongly continuous analytic semigroup
with 
$
  \operatorname{spectrum}( A )
  \subseteq
  \{
    z \in \mathbb{C}
    \colon
    \text{Re}( z ) < \eta
  \}
$,
let
$
  (
    H_r
    ,
    \left\| \cdot \right\|_{ H_r }
    ,
    \left< \cdot , \cdot \right>_{ H_r }
  )
$,
$ r \in \R $,
be a family of interpolation spaces associated to
$
  \eta - A
$,
for every $ k \in \N $,
$ \varpi \in \Pi_k $,
$
  i \in \{ 1, 2, \dots, \#_\varpi \} 
$
let 
$
  [ \cdot ]_i^\varpi
  \colon
  H^{ k + 1 }
  \to 
  H^{ 
    \#_{I_i^\varpi} + 1
  }
$
be the function which satisfies for all 
$
  \mathbf{u} = (u_0, u_1, \dots, u_k)
  \in 
  H^{ k + 1 }
$
that
$
  [ \mathbf{u} ]_i^\varpi
  = ( u_0, u_{ I_{ i, 1 }^\varpi } , u_{ I_{ i, 2 }^\varpi } , \dots , u_{ I_{ i, \#_{I_i^\varpi} }^\varpi } )
$, 
let 
$
  \stochval{\cdot}
  \colon
    \mathcal{M}(
      \mathrm{Pred}( ( \mathcal{F}_t )_{ t \in [0,T] } )
      ,
      \mathcal{B}( H )
    )
  \to
  \mathcal{P}\big(
    \mathcal{M}(
      \mathrm{Pred}( ( \mathcal{F}_t )_{ t \in [0,T] } )
      ,
$
$
      \mathcal{B}( H )
    )
  \big)
$ 
be the function which satisfies for all 
$ 
  X \in
    \mathcal{M}(
      \mathrm{Pred}( ( \mathcal{F}_t )_{ t \in [0,T] } )
      ,
      \mathcal{B}( H )
    ) 
$ 
that 
$
  \stochval{X}
  =
    \big\{
      Y \in
      \mathcal{M}(
        \mathrm{Pred}( ( \mathcal{F}_t )_{ t \in [0,T] } )
        ,
        \mathcal{B}( H )
      )
      \colon
$
$
      \inf\nolimits_{ t \in [0,T] }
      \P\big(
        Y_t = X_t
      \big)
      = 1
    \big\}
$, 
for every $ p \in (0,\infty) $
let
$ \mathcal{L}^p $ 
and 
$ \mathbb{L}^p $
be the sets given by
$
  \mathcal{L}^p
  =
  \big\{
    X \in
    \mathcal{M}(
      \mathrm{Pred}( ( \mathcal{F}_t )_{ t \in [0,T] } )
      ,
      \mathcal{B}( H )
    )
    \colon
    \sup_{ t \in [0,T] }
    \| X_t \|_{
      \mathcal{L}^p( \P ; H )
    }
    < \infty
  \big\}
$
and 
$
  \mathbb{L}^p
=
  \big\{
    \stochval{X}
    \colon
    X \in \mathcal{L}^p
  \big\}
$  
and let 
$
  \left\|\cdot\right\|_{
    \mathbb{L}^p
  }
  \colon
  \mathbb{L}^p 
  \to [0,\infty)
$ 
be the function which satisfies for all 
$ X \in \mathcal{L}^p $ 
that 
$
  \left\|\stochval{X}\right\|_{
    \mathbb{L}^p
  } 
  =
    \sup_{ t \in [0,T] }
    \| X_t \|_{
      \mathcal{L}^p( \P ; H )
    }
$, 
and for every separable $ \R $-Banach space $ ( V, \left\| \cdot \right\|_V ) $
and every 
$ a \in \R $, $ b \in (a,\infty) $, $ A \in \mathcal{B}( \R ) $,
$
  X \in \mathcal{M}( \mathcal{B}( A ) \otimes \mathcal{F} , \mathcal{B}( V ) )
$ 
with  
$ (a,b) \subseteq A $
let 
$
  \int_a^b X_s \, {\bf ds}
  \in
  \{
    [Y]_{ \P, \mathcal{B}(V) }
    \colon
    Y \in \mathcal{M}( \mathcal{F}, \mathcal{B}(V) )
  \}
$
be the set given by
$
  \int_a^b X_s \, {\bf ds}
  =
  \big[
    \int_a^b \mathbbm{1}_{ \{ \int_a^b \| X_u \|_V \, du < \infty \} } X_s \, ds
  \big]_{
    \P , \mathcal{B}( V )
  }
$.

\subsection{Differentiability with respect to the initial values}
\label{sec:differentiability_initial_value}
\begin{theorem}[Differentiability with respect to the initial value]
\label{lem:derivative_processes}
Assume the setting in Section~\ref{sec:global_setting},
let $ n \in \N $,
$ F \in \Cb{n}(H , H) $,
$ B \in \Cb{n}(H , HS(U, H)) $, 
$ \alpha \in [ 0, 1 ) $, 
$
\beta \in [ 0, \nicefrac{ 1 }{ 2 } )
$,
and for every 
$ k \in \N $, 
$
  \bm{\delta}
  =
  (\delta_1, \delta_2, \dots, \delta_k) 
  \in \R^k
$,
$
  J \in \mathcal{P}(\R)
$
let
$
  \iota^{ \bm{\delta} }_J \in \R
$
be the real number given by
$
  \iota^{ \bm{\delta} }_J 
  =
    \sum_{ i \in J \cap \{1,2,\ldots,k\} }
    \delta_i
    -
    \mathbbm{1}_{ [2,\infty) }( \#_{ J \cap \{1,2,\ldots,k\} } ) \,
    \min\{ 1-\alpha, \nicefrac{1}{2} - \beta \}
$.
Then
\begin{enumerate}[(i)]
\item
\label{item:lem_derivative:existence}
there exist up-to-modifications unique
$ ( \mathcal{F}_t )_{ t \in [0,T] } $/$ \mathcal{B}(H) $-predictable stochastic processes
$
  X^{k,\mathbf{u}} \colon 
$
$
  [0,T] \times \Omega \to H
$, 
$
  \mathbf{u} \in H^{k+1}
$, 
$
  k \in \{ 0, 1, \dots, n \}
$,
which fulfill for all 
$
  k \in \{ 0, 1, \dots, n \}
$, 
$ p \in [2,\infty) $, 
$
  \mathbf{u} = ( u_0, u_1, \ldots, u_k ) \in
    H^{k+1}
$, 
$ t \in [0,T] $ 
that 
$
  \sup_{ s \in [0,T] }
  \E[ \|X^{k,\mathbf{u}}_s\|^p_H ]
  < \infty
$ 
and 
\begin{equation}
\label{eq:lem_SEE2}
\begin{split}
& \!\!\!\!\!\!\!\!\!\!\!
  [
  X_t^{k,\mathbf{u}}
  -
  e^{tA}
  \, \mathbbm{1}_{ \{ 0, 1 \} }(k) \, u_k  
  ]_{ \P,\mathcal{B}(H) }
\\ & \!\!\!\!\!\!\!\!\!\!\!
  =
  \int_0^t
    e^{ ( t - s ) A }
    \bigg[
      \mathbbm{1}_{ \{ 0 \} }(k)
      \,
      F(X_s^{0,u_0})
      +
      \sum_{ \varpi\in \Pi_k }
      F^{ ( \#_\varpi ) }( X_s^{ 0, u_0 } )
      \big(
        X_s^{ \#_{I^\varpi_1}, [ \mathbf{u} ]_1^{ \varpi } }
        ,
        X_s^{ \#_{I^\varpi_2}, [ \mathbf{u} ]_2^{ \varpi } }
        ,
        \dots
        ,
        X_s^{ \#_{I^\varpi_{\#_\varpi}}, [\mathbf{u} ]_{ \#_\varpi }^{ \varpi } }
      \big)
    \bigg]
  \,{\bf ds}
\\ & \!\!\!\!\!\!\!\!\!\!\!
  +
  \int_0^t
    e^{ ( t - s ) A }
    \bigg[
      \mathbbm{1}_{ \{ 0 \} }(k)
      \,
     B(X_s^{0,u_0})
      +
      \sum_{ \varpi\in \Pi_k }
      B^{ ( \#_\varpi ) }( X_s^{ 0, u_0 } )
      \big(
        X_s^{ \#_{I^\varpi_1}, [ \mathbf{u} ]_1^{ \varpi } }
        ,
        X_s^{ \#_{I^\varpi_2}, [ \mathbf{u} ]_2^{ \varpi } }
        ,
        \dots
        ,
        X_s^{ \#_{I^\varpi_{\#_\varpi}}, [\mathbf{u} ]_{ \#_\varpi }^{ \varpi } }
      \big)
    \bigg]
  \, \diffns W_s
  ,
\end{split}
\end{equation}
\item
\label{item:lem_derivative:a_priori}
for all
$ k \in \{ 1, 2, \dots, n \} $,
$ p \in [2,\infty) $, 
$
  \boldsymbol{\delta}=(\delta_1, \delta_2, \dots, \delta_{ k } ) \in [0,\nicefrac{1}{2})^k
$ 
with 
$
  \sum^k_{i=1}
  \delta_i
  < \nicefrac{1}{2}
$
it holds that
\begin{align}
\label{eq:derivative.apriori}
&
  \sup_{
    \mathbf{u} =
    ( u_0, u_1, \dots, u_k ) 
    \in
    (\times^k_{ i=0 }
    H^{[i]})
  }
  \sup_{ t \in (0,T] }
  \left[
  \frac{
    t^{ 
      \iota^{ \boldsymbol{\delta} }_\N
    }
    \,
    \|
      X_t^{ k, \mathbf{u} }
    \|_{
      \mathcal{L}^p( \P; H )
    }
  }{  
    \prod_{ i = 1 }^k
    \left\| u_i \right\|_{ H_{ - \delta_i } }
  }
  \right]
\nonumber
\\ &
\nonumber
\leq
  \Theta_{ A, \eta, p, T }^{
    \alpha , 
    \beta ,
      \iota^{ \boldsymbol{\delta} }_\N
  }( |F|_{ \Cb{1}( H, H_{-\alpha} ) } 
  , 
  |B|_{ \Cb{1}( H, HS( U, H_{-\beta} ) ) } )
  \Bigg[
      \chi_{ A, \eta }^{ \delta_1 , T }\,
      \mathbbm{1}_{
        \{ 1 \} 
      }( k )
\\&\quad
      +
      \max\{T^k,1\}
  \bigg[
    \chi_{ A, \eta }^{ \alpha , T } \,
    \mathbbm{B}\big(
      1 - \alpha
      ,
      1 - \smallsum_{ i = 1 }^{ k } \delta_i
    \big)
    \| F \|_{ 
      \Cb{ k }( H, H_{-\alpha} ) 
    }
\\&\quad\nonumber+
        \chi_{ A, \eta }^{ \beta , T } \,
        \sqrt{
        \tfrac{p \, ( p - 1 )}{2}
        \,
        \mathbbm{B}\big(
          1 - 2\beta
          ,
          1 
          - 
          2 \smallsum_{ i = 1 }^{ k } \delta_i
        \big)
      } \,
    \| B \|_{ 
      \Cb{ k }( H, HS( U, H_{-\beta} ) ) 
    }
  \bigg]
\\ & \quad
\nonumber
  \cdot
    \sum_{ \varpi \in \Pi_k^{ * } }
  \prod_{ I \in \varpi }
  \sup_{ \mathbf{u} = ( u_i )_{ i \in I \cup \{0\} } \in (\times_{ i \in I \cup \{0\} } H^{[i]}) }
      \sup\limits_{ t \in (0,T] }
      \bigg[
      \displaystyle
      \frac{
        t^{
          \iota^{ \boldsymbol{\delta} }_I
        } \,
        \|
          X_t^{ \#_I, \mathbf{u} }
        \|_{
          \mathcal{L}^{ p \, \#_\varpi }( \P ; H )
        }
      }{
        \prod_{ i \in I }
        \| u_i \|_{ H_{ - \delta_i } }
      }
      \bigg]
  \Bigg]
  < \infty
  ,
\end{align}
\item
\label{item:thm_derivative}
for all
$ k \in \{ 1, 2, \dots, n \} $,
$ p \in [2,\infty) $, 
$ x \in H $
it holds that
$
  \big(
    H^k
    \ni 
    \mathbf{u}
    \mapsto 
    \stochval{X^{ k, (x, \mathbf{u}) }}
    \in
    \mathbb{L}^p
  \big)
  \in 
  L^{(k)}(
    H
    ,
    \mathbb{L}^p
  )
$, 
\item
\label{item:lem_derivative:a_priori_Lip}
for all
$ k \in \{ 1,2, \dots, n \} $,
$ p \in [2,\infty) $, 
$
  \boldsymbol{\delta}=(\delta_1, \delta_2, \dots, \delta_{ k } ) \in [0,\nicefrac{1}{2})^k
$ 
with 
$
  \sum^k_{i=1}
  \delta_i
  < \nicefrac{1}{2}
$, 
$
  |F|_{\operatorname{Lip}^k(H,H_{-\alpha})}
  < \infty
$,  
and 
$
  |B|_{\operatorname{Lip}^k(H,HS(U,H_{-\beta}))}
  < \infty
$ 
it holds that
\allowdisplaybreaks
\begin{align}
\label{eq:a_priori_Lip}
& \nonumber
  \sup_{\substack{
    x,y \in H,
    \\
    x\neq y
  }}
  \sup_{\mathbf{u}=(u_1,u_2,\dots,u_k)\in 
  (\nzspace{H})^k}
  \sup_{ t \in (0,T] }
  \frac{
  t^{ \iota^{(\boldsymbol{\delta},0)}_\N
  }
    \,
    \|
      X_t^{ k, ( x, \mathbf{u} ) }
      -
      X_t^{ k, ( y, \mathbf{u} ) }
    \|_{
      \mathcal{L}^p( \P; H )
    }
  }{
    \| x-y \|_{ H }
    \prod^k_{ i=1 }
    \|u_i\|_{ H_{ -\delta_i } }
  }
\\&\nonumber\leq
    \max\{T^k,1\}
  \,
  \Theta_{ A, \eta, p, T }^{ 
    \alpha, \beta, 
    \iota^{(\boldsymbol{\delta},0)}_\N
  }\!\big(
    | F |_{ \Cb{1}( H , H_{-\alpha} ) }
    ,
    | B |_{ \Cb{1}( H , HS( U , H_{-\beta} ) ) }
  \big)
\\&\nonumber\quad\cdot
    \Bigg(
    \chi^{ 0, T }_{ A, \eta }\,
    \Theta^{ \alpha,\beta,0 }_{ A, \eta, p, T }
    \big(
      | F |_{ \Cb{1}( H , H_{-\alpha} ) }
      ,
      | B |_{ \Cb{1}( H , HS( U , H_{-\beta} ) ) }
    \big)
\\&\nonumber\quad\cdot
  \sum_{
    \varpi \in \Pi_k
  }
  \prod_{ I\in\varpi }
    \sup_{\mathbf{u}=(u_i)_{i\in I \cup \{0\}}\in(\times_{i\in I \cup \{0\}}H^{[i]})}
    \sup_{ t\in (0,T] }
    \Bigg[
    \frac{
      t^{ \iota^{ \boldsymbol{\delta} }_I }\,
      \|
        X_t^{ \#_I, \mathbf{u} }
      \|_{ \lpn{p(\#_\varpi+1)}{\P}{H} }
    }{
      \prod_{ i\in I }
      \| u_i \|_{ H_{ -\delta_i } }
    }
    \Bigg]
\\&\quad+
    \sum_{ \varpi \in \Pi^*_k }
  \sum_{ I \in \varpi }
              \sup_{\substack{
                x,y \in H,
                \\
                x\neq y
              }}
    \sup_{\mathbf{u}=(u_i)_{i\in I}\in (\nzspace{H})^{\#_I}}
            \sup_{ t \in (0,T] }
          \Bigg[
            \frac{
              t^{
                \iota_{ I \cup \{k+1\} }^{ (\boldsymbol{\delta},0) } 
              }
              \|
      X_t^{ \#_I, ( x, \mathbf{u} ) }
      -
      X_t^{ \#_I, ( y, \mathbf{u} ) }
              \|_{
                \mathcal{L}^{ p \#_\varpi }( \P ; H )
              }
            }{
              \| x-y \|_{ H }
              \prod_{i\in I}
              \| u_i \|_{ H_{ - \delta_i } }
            }
          \Bigg]
\\&\quad\nonumber\cdot
  \prod_{ J\in\varpi\setminus\{I\} }
    \sup_{\mathbf{u}=(u_i)_{i\in J \cup \{0\}}\in(\times_{i\in J \cup \{0\}}H^{[i]})}
    \sup_{ t\in (0,T] }
    \Bigg[
    \frac{
      t^{ \iota^{ \boldsymbol{\delta} }_J }\,
      \|
        X_t^{ \#_J, \mathbf{u} }
      \|_{ \lpn{p\#_\varpi}{\P}{H} }
    }{
      \prod_{ i\in J }
      \| u_i \|_{ H_{ -\delta_i } }
    }
    \Bigg]
    \Bigg)
\\&\nonumber\quad\cdot
    \bigg[
      \chi^{ \alpha, T }_{ A, \eta }\,
      \mathbb{B}\big(
        1-\alpha
        ,
        1-\smallsum^k_{ i=1 }\delta_i
      \big) \,
       \|F\|_{ \operatorname{Lip}^k(H,H_{-\alpha}) }
\\&\nonumber\quad+
      \chi^{ \beta, T }_{ A, \eta }\,
      \sqrt{
        \tfrac{p\,(p-1)}{2}
      \,
        \mathbb{B}\big(
          1-2\beta
          ,
          1
          -
          2\smallsum^k_{i=1}\delta_i
        \big)
      }\,
      \|B\|_{ \operatorname{Lip}^k(H,HS(U,H_{-\beta})) }
    \bigg]
    < \infty,
\end{align}
\item 
\label{item:thm_derivative.continuous}
for all
$ k \in \{ 1, 2, \dots, n \} $,
$ p \in [2,\infty) $
it holds that 
$
  \big(
  H
  \ni 
  x
  \mapsto 
  \big[
    H^k
    \ni 
    \mathbf{u}
    \mapsto 
    \stochval{X^{ k, (x, \mathbf{u}) }}
    \in
    \mathbb{L}^p
  \big]
  \in 
  L^{(k)}(
    H
    ,
    \mathbb{L}^p
  )
  \big)
  \in \mathcal{C}(
    H
    ,
  L^{(k)}(
  H
  ,
  \mathbb{L}^p
  )  
  )
$,
\item
\label{item:lem_derivative:frechet}
for all
$
  k \in \{ 1, 2, \dots, n \}
$,
$ p \in [2,\infty) $, 
$
  x \in H
$  
it holds that
\begin{equation}
\!\!\!
\begin{cases}
\limsup\limits_{ \nzspace{H} \ni u_k \rightarrow 0 }
\sup\limits_{t\in[0,T]}
\tfrac{
	\|
	X^{ 0, x+u_k }_t - X^{ 0, x }_t - X^{ 1, (x,u_k) }_t
	\|_{ \lpn{p}{\P}{H} }
}{
\| u_k \|_H
}
=
0
&\!\!\!\!\colon k=1
\\
\begin{split}
&
\limsup_{ \nzspace{H} \ni u_k \rightarrow 0 }
\sup_{ \mathbf{u}=( u_1, u_2, \dots, u_{k-1} ) \in (\nzspace{H})^{k-1} }
\sup_{t\in[0,T]}
\tfrac{
	\|
	X^{ k-1, (x+u_k,\mathbf{u}) }_t - X^{ k-1, (x,\mathbf{u}) }_t - X^{ k, (x,\mathbf{u},u_k) }_t
	\|_{ \lpn{p}{\P}{H} }
}{
\prod^k_{ i=1 } \| u_i \|_H
}
=
0
\end{split}
&\!\!\!\!\colon k>1
\end{cases},
\end{equation}
\item
\label{item:lem_derivative:smoothness}
for all 
$ p \in [2,\infty) $
it holds that
$
  \big(
  H \ni x \mapsto 
  \stochval{X^{ 0,x }}
  \in
  \mathbb{L}^p
  \big)
  \in
  \Cb{n}( H, \mathbb{L}^p )
$,
\item
\label{item:lem_derivative:representation}
for all 
$
  k \in \{ 1, 2, \dots, n \}
$, 
$ p \in [2,\infty) $, 
$ x, u_1, u_2, \ldots, u_k \in H $ 
it holds that 
\begin{equation}
\begin{split}
  \big(
  \tfrac{d^k}{dx^k}
  \stochval{X^{0,x}}
  \big) 
  (u_1,u_2,\ldots,u_k)
&=
  \big(
  H \ni y \mapsto 
  \stochval{X^{0,y}}
  \in
  \mathbb{L}^p
  \big)^{(k)}(x) (u_1,u_2,\ldots,u_k)
\\&=
  \stochval{X^{k,(x,u_1,u_2,\ldots,u_k)}}
  ,
\end{split}
\end{equation}
\item
\label{item:time.derivative.smoothness}
for all $ p \in [2,\infty) $, $ t \in [0,T] $ it holds that 
$
  \big(
    H \ni x \mapsto
    [X^{0,x}_t]_{\P,\mathcal{B}(H)} \in \lpnb{p}{\P}{H}
  \big) \in \Cb{n}(H,\lpnb{p}{\P}{H})
$, 
and 
\item
\label{item:time.derivative.representation}
for all 
$
  k \in \{ 1, 2, \dots, n \}
$, 
$ p \in [2,\infty) $, 
$ x, u_1, u_2, \ldots, u_k \in H $, 
$ t \in [0,T] $
it holds that 
\begin{equation}
\begin{split}
&
  \big(
  \tfrac{d^k}{dx^k}
  [X^{0,x}_t]_{\P,\mathcal{B}(H)}
  \big) 
  (u_1,u_2,\ldots,u_k)
\\&=
  \big(
  H \ni y \mapsto 
  [X^{0,y}_t]_{\P,\mathcal{B}(H)}
  \in
  \lpnb{p}{\P}{H}
  \big)^{(k)}(x) (u_1,u_2,\ldots,u_k)
=
  [X^{k,(x,u_1,u_2,\ldots,u_k)}_t]_{\P,\mathcal{B}(H)}
  .
\end{split}
\end{equation}
\end{enumerate}
\end{theorem}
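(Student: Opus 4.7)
The plan is to prove items (i)--(x) simultaneously by induction on $k\in\{0,1,\ldots,n\}$. The base case $k=0$ is item~\eqref{item:thm.base.existence} of Theorem~\ref{thm:intro}, which is classical. For the inductive step, I would first isolate in \eqref{eq:lem_SEE2} the summand corresponding to the trivial partition $\varpi=\{\{1,2,\ldots,k\}\}$, which produces the terms $F'(X^{0,u_0})X^{k,\mathbf{u}}$ and $B'(X^{0,u_0})X^{k,\mathbf{u}}$ that are linear in the unknown $X^{k,\mathbf{u}}$, and move the remaining sum over $\Pi_k^{*}$ into an inhomogeneity whose ingredients $X^{\#I,[\mathbf{u}]^\varpi}$ with $\#I<k$ have been constructed at earlier inductive stages. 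Existence and uniqueness in item~\eqref{item:lem_derivative:existence} then follow from a Banach fixed-point argument on predictable processes in $\mathcal{L}^p$, exactly as for the reference SEE in item~\eqref{item:thm.base.existence}, once the inhomogeneity is shown to be $\mathcal{L}^p$-integrable; this in turn uses the inductive a priori bound~\eqref{eq:derivative.apriori} (with all $\delta_i=0$) together with H\"older's inequality and the product structure of the summands.

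The main contribution, and the main obstacle, is the a priori bound~\eqref{eq:derivative.apriori} in the weak norms $\|\cdot\|_{H_{-\delta_i}}$ under the sharp restriction $\sum_{i=1}^k \delta_i<\nicefrac{1}{2}$. My approach is to take $\mathcal{L}^p(\P;H)$-norms in \eqref{eq:lem_SEE2}, apply the triangle inequality for the Bochner integral and the Burkholder--Davis--Gundy inequality for the stochastic integral, and distribute the fractional regularization of the semigroup by writing
\begin{equation*}
  e^{(t-s)A}=e^{(t-s)A}(\eta-A)^{\sum_{i\in I}\delta_i}\cdot (\eta-A)^{-\sum_{i\in I}\delta_i},
\end{equation*}
so that $(\eta-A)^{-\sum_{i\in I}\delta_i}$ is absorbed into the $F^{(\#\varpi)}$-/$B^{(\#\varpi)}$-terms (invoking the $\Cb{k}(H,H_{-\alpha})$-/$\Cb{k}(H,HS(U,H_{-\beta}))$-norms and the $H_{-\delta_i}$-regularity of the $u_i$ carried by inductive hypothesis through each factor $X^{\#I_j,[\mathbf{u}]^\varpi_j}$), while the factor $(\eta-A)^{\sum_{i\in I}\delta_i}e^{(t-s)A}$ contributes a singularity $(t-s)^{-\alpha-\sum_{i\in I}\delta_i}$ (resp.\ $(t-s)^{-\beta-\sum_{i\in I}\delta_i}$) that is integrable precisely under $\sum_{i\in I}\delta_i<1-\alpha$ (resp.\ $\sum_{i\in I}\delta_i<\nicefrac{1}{2}-\beta$). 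Collecting the two resulting contributions, the Beta function $\mathbb{B}(\cdot,\cdot)$ appears from the convolution integrals, and multiplying through by $t^{\iota^{\boldsymbol{\delta}}_\N}$ yields a Volterra-type inequality for the quantity $g(t)$ defined as the supremum inside \eqref{eq:derivative.apriori}; iterating this inequality gives the series defining $\mathrm{E}_{\alpha,\max\{\alpha,2\beta\}}$ and hence the bound by $\Theta^{\alpha,\beta,\iota^{\boldsymbol{\delta}}_\N}_{A,\eta,p,T}$. The combinatorial bookkeeping over $\Pi_k^*$ reproduces the product-over-blocks structure on the right-hand side of~\eqref{eq:derivative.apriori}.

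Items~\eqref{item:thm_derivative} and~\eqref{item:lem_derivative:a_priori_Lip} follow next. Multilinearity in $\mathbf{u}$ for fixed $u_0=x$ is immediate from the uniqueness in~\eqref{item:lem_derivative:existence} because both sides of~\eqref{eq:lem_SEE2} are multilinear in $(u_1,\ldots,u_k)$ when $k\geq 1$. The Lipschitz bound~\eqref{eq:a_priori_Lip} in $x$ is proved by subtracting the SEEs for $X^{k,(x,\mathbf{u})}$ and $X^{k,(y,\mathbf{u})}$, expanding each $F^{(\#\varpi)}(X^{0,x}_s)-F^{(\#\varpi)}(X^{0,y}_s)$ (and analogously for $B^{(\#\varpi)}$) by the fundamental theorem of calculus to extract a $\|X^{0,x}_s-X^{0,y}_s\|_H$-factor controlled by $|F|_{\operatorname{Lip}^k}$, $|B|_{\operatorname{Lip}^k}$, and telescoping the multilinear factors to isolate one block $I\in\varpi$ in which the Lipschitz comparison occurs while the remaining blocks are controlled by~\eqref{item:lem_derivative:a_priori}. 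The same Volterra argument as above, now at level $\iota^{(\boldsymbol{\delta},0)}_\N$, closes the bound and proves finiteness. Continuity in $x$ of $\mathbf{u}\mapsto\stochval{X^{k,(x,\mathbf{u})}}$ in item~\eqref{item:thm_derivative.continuous} follows from~\eqref{eq:a_priori_Lip} together with continuity of $F^{(k)}$ and $B^{(k)}$ (Lipschitz smoothness in the hypotheses) or, more generally, by dominated convergence using the a priori bound as a majorant.

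Finally, for items~\eqref{item:lem_derivative:frechet}--\eqref{item:lem_derivative:representation}, I would define the remainder processes $R^{u_k}_t:=X^{k-1,(x+u_k,\mathbf{u})}_t-X^{k-1,(x,\mathbf{u})}_t-X^{k,(x,\mathbf{u},u_k)}_t$, subtract the corresponding SEEs, and apply a second-order Taylor expansion around $x$ to each $F^{(\#\varpi)}$ and $B^{(\#\varpi)}$ term; the linear parts cancel against $X^{k,(x,\mathbf{u},u_k)}$, while the quadratic remainders are of order $\|u_k\|_H^2$ and the lower-order compositions contribute errors that are $o(\|u_k\|_H)$ uniformly in $\mathbf{u}\in (\nzspace{H})^{k-1}$ by~\eqref{item:thm_derivative.continuous} and dominated convergence with majorant~\eqref{eq:derivative.apriori}. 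Re-applying the Volterra/Gronwall estimate to $R^{u_k}$ then yields the Fréchet differentiability~\eqref{item:lem_derivative:frechet}, and items~\eqref{item:lem_derivative:smoothness}--\eqref{item:lem_derivative:representation} follow by identifying the derivatives via the uniqueness in~\eqref{item:lem_derivative:existence}; items~\eqref{item:time.derivative.smoothness}--\eqref{item:time.derivative.representation} are the same statements evaluated at fixed $t\in[0,T]$, and they are immediate from~\eqref{item:lem_derivative:smoothness}--\eqref{item:lem_derivative:representation} via the continuous embedding $\mathbb{L}^p\hookrightarrow\lpnb{p}{\P}{H}$ obtained by evaluation at $t$. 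The truly delicate step throughout is the distribution of fractional regularity across partition blocks in the Volterra estimate, where the hypothesis $\sum_{i=1}^k\delta_i<\nicefrac{1}{2}$ is used in its sharpest form.
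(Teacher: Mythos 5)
Your overall architecture coincides with the paper's: induct on $k$, split off the summand for the trivial partition $\{\{1,\dots,k\}\}$ as the part linear in the unknown, treat the sum over $\Pi_k^*$ as an inhomogeneity built from already-constructed lower-order processes, run a singular-Gronwall/Volterra estimate to get the a priori bounds, derive multilinearity from uniqueness, and obtain Fr\'echet differentiability from remainder processes. However, there is a genuine gap in your mechanism for the weighted a priori bound \eqref{eq:derivative.apriori}, which is the heart of the theorem. You propose to write $e^{(t-s)A}=\big[(\eta-A)^{\sum_{i}\delta_i}e^{(t-s)A}\big](\eta-A)^{-\sum_i\delta_i}$ and to ``absorb'' $(\eta-A)^{-\sum_i\delta_i}$ into the $F^{(\#_\varpi)}$-/$B^{(\#_\varpi)}$-terms. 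This is not a legitimate operation: $F^{(\#_\varpi)}(X^{0,u_0}_s)(\cdot,\dots,\cdot)$ is a multilinear map composed with a nonlinear dependence on the base point, and fractional powers of $A$ do not commute with it, so there is nothing for $(\eta-A)^{-\sum_i\delta_i}$ to act on that would produce the norms $\|u_i\|_{H_{-\delta_i}}$. Worse, loading the extra exponent onto the convolution kernel produces singularities $(t-s)^{-\alpha-\sum_i\delta_i}$ and $(t-s)^{-2\beta-2\sum_i\delta_i}$, whose integrability forces $\sum_i\delta_i<\min\{1-\alpha,\nicefrac{1}{2}-\beta\}$ --- strictly stronger than the stated hypothesis $\sum_i\delta_i<\nicefrac{1}{2}$ whenever $\alpha,\beta>0$. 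Your route therefore proves a weaker theorem. The correct mechanism, which the paper uses, is that the weak norms enter only once, at level $k=1$, through the semigroup smoothing on the initial value, $\|e^{tA}u_1\|_H\le\chi^{\delta,T}_{A,\eta}\,t^{-\delta}\|u_1\|_{H_{-\delta}}$; for $k\ge 2$ the initial value vanishes and the inductive hypothesis supplies time-weighted bounds $\|X^{\#_I,[\mathbf{u}]^\varpi_i}_s\|_{\mathcal{L}^{p\#_\varpi}}\lesssim s^{-\iota^{\boldsymbol{\delta}}_{I}}\prod_{i\in I}\|u_i\|_{H_{-\delta_i}}$ on the lower-order factors. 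The resulting singularity $s^{-\sum_i\delta_i}$ sits at $s=0$, at the \emph{opposite} endpoint from the semigroup singularity $(t-s)^{-\alpha}$ resp.\ $(t-s)^{-2\beta}$, so the convolution yields $\mathbbm{B}(1-\alpha,1-\sum_i\delta_i)$ and $\mathbbm{B}(1-2\beta,1-2\sum_i\delta_i)$ and only $2\sum_i\delta_i<1$ is needed. This endpoint separation is exactly what makes the condition $\sum_i\delta_i<\nicefrac{1}{2}$ attainable, and your sketch misses it.

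A secondary issue: in item~\eqref{item:lem_derivative:frechet} you invoke a ``second-order Taylor expansion'' of $F^{(\#_\varpi)}$ and $B^{(\#_\varpi)}$ to get remainders of order $\|u_k\|_H^2$. For $k=n$ this is unavailable, since only $F\in\Cb{n}(H,H)$ is assumed and $F^{(n)}$ need not be Lipschitz or differentiable. The paper instead uses a first-order expansion with integral remainder $\int_0^1 G_l'(\,\cdot+\rho[\cdots])\,d\rho$ and a uniform-integrability/dominated-convergence argument to conclude that the remainder is $o(\|u_k\|_H)$ without any rate; you would need to do the same to cover the top-order case.
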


\begin{proof}
Throughout this proof 
let 
$
  r_0, r_1 \in [ 0, 1 )
$
be the real numbers given by 
$ r_0=\alpha $ and $ r_1=\beta $, 
let $ \mathbf{0}_k \in \R^k $, $ k \in \N $, 
be the vectors which satisfy for all $ k \in \N $ that 
$
  \mathbf{0}_k = (0,0,\ldots,0)
$, 
let 
$ ( V_{l,r}, \left\| \cdot \right\|_{ V_{l,r} }, \left< \cdot, \cdot \right>_{ V_{l,r} } ) $,
$ l \in \{ 0, 1 \} $, $ r \in [0,\infty) $, 
be the $ \R $-Hilbert spaces which satisfy for all $ r \in [0,\infty) $ that 
\begin{equation}
  ( V_{0,r}, \left\| \cdot \right\|_{ V_{0,r} }, \left< \cdot, \cdot \right>_{ V_{0,r} } )
  =
  ( H_{-r}, \left\| \cdot \right\|_{ H_{-r} }, \left< \cdot, \cdot \right>_{ H_{-r} } )
\end{equation}
and 
\begin{equation}
  ( V_{1,r}, \left\| \cdot \right\|_{ V_{1,r} }, \left< \cdot, \cdot \right>_{ V_{1,r} } )
  =
  ( HS(U,H_{-r}), 
  \left\| \cdot \right\|_{ HS(U,H_{-r}) }, 
  \left< \cdot, \cdot \right>_{ HS(U,H_{-r}) } ),
\end{equation}
let 
$
  G_l \colon H \to V_{l,0}
$,
$ l \in \{ 0, 1 \} $,
be the functions given by 
$
  G_0 = F
$
and 
$
  G_1 = B
$, 
let 
$ \lfloor \cdot \rfloor \colon \R \to \R $
and 
$ \lceil \cdot \rceil \colon \R \to \R $
be the functions which satisfy for all
$ t \in \R $
that
\begin{equation}
\lfloor t \rfloor =
\max\!\left(
( -\infty, t ]
\cap
\{ 0, 1, - 1, 2, - 2, \dots \}
\right)
\end{equation}
and 
\begin{equation}
\lceil t \rceil =
\min\!\left(
[ t, \infty )
\cap
\{ 0, 1 , - 1 , 2 , - 2 , \dots \}
\right),
\end{equation} 
let 
$
  \theta^m_l 
  \colon 
    H^{ m + 1 }
  \to 
    H^{ m }
$, 
$ m \in \N $, 
$ l \in \{0,1\} $, 
be the functions which satisfy
for all 
$ l \in \{0,1\} $, 
$ m \in \N $,
$ \mathbf{u} = ( u_0, u_1, \dots, u_m ) \in H^{ m + 1 } $
that
\begin{equation}
  \theta^m_l(\mathbf{u})
  =
\begin{cases}
  u_0 + l u_1 & \colon m=1
  \\
  (u_0 + l u_m, u_1, u_2, \dots, u_{ m - 1 }) & \colon m > 1
\end{cases}
,
\end{equation}
and let $ \deltaset{k} \in \mathcal{P}(\R^k) $, 
$ k \in \N $, 
be the sets which satisfy for all $ k \in \N $ that 
\begin{equation}
  \deltaset{k}=
  \big\{ 
    (\delta_1,\delta_2,\ldots,\delta_k) \in [0,\nicefrac{1}{2})^k \colon 
    \smallsum^k_{i=1} \delta_i < \nicefrac{1}{2}
  \big\}.
\end{equation}
Next we claim that for every 
$ k \in \{1,2,\ldots,n\} $ 
there exist up-to-modifications unique
$ ( \mathcal{F}_t )_{ t \in [0,T] } $/$\mathcal{B}(H)$-predictable stochastic processes
$
  X^{l,\mathbf{u}} \colon [0,T] \times \Omega \to H
$, 
$
  \mathbf{u} \in H^{l+1}
$, 
$
  l \in \{ 0, 1, \dots, k \}
$,
which fulfill for all 
$
  l \in \{ 0, 1, \dots, k \}
$, 
$ p \in [2,\infty) $, 
$
  \mathbf{u} = ( u_0, u_1, \ldots, u_l ) \in
    H^{l+1}
$, 
$ t \in [0,T] $ 
that 
$
  \sup_{ s \in [0,T] }
  \E[ \|X^{l,\mathbf{u}}_s\|^p_H ]
  < \infty
$ 
and 
\begin{equation}
\label{eq:SEE.3}
\begin{split}
&
  [
  X_t^{l,\mathbf{u}}
  -
  e^{tA}
  \, \mathbbm{1}_{ \{ 0, 1 \} }(l) \, u_l  
  ]_{ \P,\mathcal{B}(H) }
\\ &
  =
  \int_0^t
    e^{ ( t - s ) A }
    \bigg[
      \mathbbm{1}_{ \{ 0 \} }(l)
      \,
      F(X_s^{0,u_0})
      +
      \sum_{ \varpi\in \Pi_l }
      F^{ ( \#_\varpi ) }( X_s^{ 0, u_0 } )
      \big(
       X_s^{ \#_{I^\varpi_1}, [ \mathbf{u} ]_1^{ \varpi } }
        ,
        X_s^{ \#_{I^\varpi_2}, [ \mathbf{u} ]_2^{ \varpi } }
        ,
        \dots
        ,
        X_s^{ \#_{I^\varpi_{\#_\varpi}}, [\mathbf{u} ]_{ \#_\varpi }^{ \varpi } }
      \big)
    \bigg]
  \,{\bf ds}
\\ &
  +
  \int_0^t
    e^{ ( t - s ) A }
    \bigg[
      \mathbbm{1}_{ \{ 0 \} }(l)
      \,
      B(X_s^{0,u_0})
      +
      \sum_{ \varpi\in \Pi_l }
      B^{ ( \#_\varpi ) }( X_s^{ 0, u_0 } )
      \big(
       X_s^{ \#_{I^\varpi_1}, [ \mathbf{u} ]_1^{ \varpi } }
        ,
        X_s^{ \#_{I^\varpi_2}, [ \mathbf{u} ]_2^{ \varpi } }
        ,
        \dots
        ,
        X_s^{ \#_{I^\varpi_{\#_\varpi}}, [\mathbf{u} ]_{ \#_\varpi }^{ \varpi } }
      \big)
    \bigg]
  \, \diffns W_s
  .
\end{split}
\end{equation}

We now prove~\eqref{eq:SEE.3} by induction on $ k \in \{1,2,\ldots,n\} $.
For the base case $k=1$ note that, e.g., 
item~(i) of Corollary~2.10 in~\cite{AnderssonJentzenKurniawan2016arXiv} 
(with 
$H=H$, 
$U=U$, 
$T=T$, 
$\eta=\eta$, 
$\alpha=0$, 
$\beta=0$,
$W=W$, 
$A=A$, 
$F=F$, 
$B=B$, 
$ \delta = 0 $
in the notation of Corollary~2.10 in~\cite{AnderssonJentzenKurniawan2016arXiv})
ensures the existence of up-to-modifications unique 
$ ( \mathcal{F}_t )_{ t \in [0,T] } $/$\mathcal{B}(H)$-predictable stochastic processes 
$ X^{0,x} \colon [0,T] \times \Omega \to H $, $ x \in H $, 
which fulfill for all $ p \in [2,\infty) $, 
$ x \in H $, $ t \in [0,T] $ that 
$ \sup_{ s \in [0,T] } \E[ \| X^{0,x}_s \|^p_H ] < \infty $ 
and 
\begin{equation}
\label{eq:base.case.0}
  [ X^{0,x}_t - e^{ tA } x ]_{ \P, \mathcal{B}(H) }
  =
  \int^t_0
  e^{ (t-s) A } F( X^{0,x}_s ) \, {\bf ds}
  +
  \int^t_0
  e^{ (t-s) A } B( X^{0,x}_s ) \, dW_s
  .  
\end{equation}
Next we note that for all 
$ l \in \{0,1\} $, 
$ p \in [2,\infty) $, 
$ u \in H $, 
$ Y, Z \in \lpn{p}{\P}{H} $, 
$ t \in (0,T] $ 
it holds that 
\begin{equation}
\label{eq:Lip.G.1}
\begin{split}
  \| G'_l(X^u_t) Y - G'_l(X^u_t) Z \|_{\lpn{p}{\P}{V_{l,0}}}
&\leq
  | G_l |_{ \Cb{1}( H, V_{l,0} ) } \,
  \| Y - Z \|_{ \lpn{p}{\P}{H} }
  \qquad\text{and}
\\
  \| G'_l(X^u_t) 0 \|_{\lpn{p}{\P}{V_{l,0}}} &= 0
  .
\end{split}
\end{equation}
This allows us to apply item~(i) of Theorem~2.9 in~\cite{AnderssonJentzenKurniawan2016arXiv}
(with 
$
  H=H
$, 
$
  U=U
$, 
$ T = T $, 
$ \eta = \eta $, 
$ p = p $, 
$ \alpha = 0 $, 
$
  \hat{\alpha} = 0
$,
$ \beta = 0 $, 
$
  \hat{\beta} 
 = 
  0
$,
$
  L_0 = |F|_{ \Cb{1}( H, H ) }
$, 
$
  \hat{L}_0 = 0
$,
$
  L_1 = |B|_{ \Cb{1}( H, HS(U,H) ) }
$, 
$
  \hat{L}_1 = 0
$,
$
  W=W
$, 
$ A=A $,
$
  \mathbf{F} = 
  \big(
    [0,T] \times \Omega \times H
    \ni (t,\omega,x)
    \mapsto
    F'(X^{0,u_0}_t(\omega)) x
    \in H
  \big)
$,
$
  \mathbf{B} = 
  \big(
    [0,T] \times \Omega \times H
    \ni (t,\omega,x)
    \mapsto
    B'(X^{0,u_0}_t(\omega)) x
    \in HS(U,H)
  \big)
$,
$
  \delta = 0
$, 
$
  \lambda = 
  0
$, 
$
  \xi = 
  ( \Omega \ni \omega \mapsto u_1 \in H )
$
for 
$
  u_0, u_1
  \in
  H
$, 
$ p \in [2,\infty) $
in the notation of Theorem~2.9 in~\cite{AnderssonJentzenKurniawan2016arXiv}) 
to obtain that there exist up-to-modifications unique
$ ( \mathcal{F}_t )_{ t \in [0,T] } $/$\mathcal{B}(H)$-predictable stochastic processes
$
  X^{1,\mathbf{u}} \colon [0,T] \times \Omega \to H
$, 
$
  \mathbf{u} \in H^2
$, 
which fulfill for all 
$ p \in [2,\infty) $, 
$
  \mathbf{u} = ( u_0, u_1 ) \in
    H^2
$, 
$ t \in [0,T] $ 
that 
$
  \sup_{ s \in [0,T] }
  \E[ \|X^{1,\mathbf{u}}_s\|^p_H ]
  < \infty
$ 
and 
\begin{equation}
\begin{split}
  [X_t^{1,\mathbf{u}}-e^{tA}u_1]_{ \P, \mathcal{B}(H) }
  =
  \int_0^t
    e^{ ( t - s ) A }
      F'( X_s^{ 0, u_0 } ) \,
      X^{ 1, \mathbf{u} }_s
  \, { \bf ds }
  +
  \int_0^t
    e^{ ( t - s ) A }
      B'( X_s^{ 0, u_0 } ) \,
      X^{ 1, \mathbf{u} }_s
  \, \diffns W_s.
\end{split}
\end{equation}
This and~\eqref{eq:base.case.0} prove~\eqref{eq:SEE.3} in the base case $k=1$. 
For the induction step 
$ \{ 1,2,\ldots,n-1 \} \ni k \to k+1 \in \{2,3,\ldots,n\} $ 
we introduce more notation. 
Assume that there exists a natural number 
$ k \in \{ 1,2,\ldots,n-1 \} $ 
such that~\eqref{eq:SEE.3} holds for $k=k$, 
let 
$
  X^{l,\mathbf{u}} \colon [0,T] \times \Omega \to H
$, 
$
  \mathbf{u} \in H^{l+1}
$, 
$
  l \in \{ 2, 3, \dots, k \}
$, 
be up-to-modifications unique
$ ( \mathcal{F}_t )_{ t \in [0,T] } $/$ \mathcal{B}(H) $-predictable stochastic processes
which fulfill for all 
$
  l \in \{ 2, 3, \dots, k \}
$, 
$ p \in [2,\infty) $, 
$
  \mathbf{u} = ( u_0, u_1, \ldots, u_l ) \in
    H^{l+1}
$, 
$ t \in [0,T] $ 
that 
$
  \sup_{ s \in [0,T] }
  \E[ \|X^{l,\mathbf{u}}_s\|^p_H ]
  < \infty
$ 
and 
\begin{equation}
\begin{split}
  [
  X_t^{l,\mathbf{u}}
  ]_{ \P,\mathcal{B}(H) }
&=
  \int_0^t
    e^{ ( t - s ) A }
      \sum_{ \varpi\in \Pi_l }
      F^{ ( \#_\varpi ) }( X_s^{ 0, u_0 } )
      \big(
       X_s^{ \#_{I^\varpi_1}, [ \mathbf{u} ]_1^{ \varpi } }
        ,
        X_s^{ \#_{I^\varpi_2}, [ \mathbf{u} ]_2^{ \varpi } }
        ,
        \dots
        ,
        X_s^{ \#_{I^\varpi_{\#_\varpi}}, [\mathbf{u} ]_{ \#_\varpi }^{ \varpi } }
      \big)
  \,{\bf ds}
\\ & \quad
  +
  \int_0^t
    e^{ ( t - s ) A }
      \sum_{ \varpi\in \Pi_l }
      B^{ ( \#_\varpi ) }( X_s^{ 0, u_0 } )
      \big(
       X_s^{ \#_{I^\varpi_1}, [ \mathbf{u} ]_1^{ \varpi } }
        ,
        X_s^{ \#_{I^\varpi_2}, [ \mathbf{u} ]_2^{ \varpi } }
        ,
        \dots
        ,
        X_s^{ \#_{I^\varpi_{\#_\varpi}}, [\mathbf{u} ]_{ \#_\varpi }^{ \varpi } }
      \big)
  \, \diffns W_s
  ,
\end{split}
\end{equation}
let 
$
  \mathcal{G}^{ \mathbf{u} }_l \colon [0,T] \times \Omega \times H \to V_{l,0}
$, 
$
  \mathbf{u}
  \in H^{k+2}
$, 
$ l \in \{ 0,1 \} $, 
be the functions which satisfy for all
$ l \in \{ 0,1 \} $, 
$
  \mathbf{u} = ( u_0, u_1, \dots, u_{ k + 1 } )
  \in H^{k+2}
$, 
$ t \in [0,T] $,
$ x \in H $
that
\begin{equation}
  \mathcal{G}^{ \mathbf{u} }_l(t, x )
  =
  G_l'( X_t^{ 0, u_0 } ) \, x
  +
  {\smallsum_{ \varpi \in \Pi_{ k + 1 }^{ * } }}
    G_l^{ ( \#_\varpi ) }( X_t^{ 0, u_0 } )
    \big(
       X_t^{ \#_{I^\varpi_1}, [ \mathbf{u} ]_1^{ \varpi } }
        ,
        X_t^{ \#_{I^\varpi_2}, [ \mathbf{u} ]_2^{ \varpi } }
        ,
        \dots
        ,
        X_t^{ \#_{I^\varpi_{\#_\varpi}}, [\mathbf{u} ]_{ \#_\varpi }^{ \varpi } }
    \big),
\end{equation}
and let 
$ \bar{L}^{ \mathbf{u}, p }_l \in [0,\infty) $, 
$
  \mathbf{u}
  \in H^{k+2}
$, 
$ p \in [2,\infty) $, 
$ l \in \{ 0,1 \} $, 
be the real numbers which satisfy for all 
$ l \in \{ 0,1 \} $, 
$ p \in [2,\infty) $, 
$
  \mathbf{u}
  \in H^{k+2}
$
that 
\begin{equation}
\begin{split}
&
  \bar{L}^{ \mathbf{u}, p }_l
  =
    \smallsum_{ \varpi \in \Pi_{ k + 1 }^{ * } }
    | G_l |_{ 
      \Cb{ \#_\varpi }( H, V_{l,0} ) 
    }
      \smallprod_{ i = 1 }^{ \#_\varpi }
      \big\|\stochval{X^{
                  \#_{I^\varpi_i}, 
                  [ \mathbf{u} ]_i^{ \varpi }
                }}\big\|_{ \mathbb{L}^{ p \#_\varpi } }
    .
\end{split}
\end{equation}
Next we note that H\"{o}lder's inequality implies for all 
$ l \in \{0,1\} $, 
$ p \in [2,\infty) $,
$
  \mathbf{u} = ( u_0, u_1, \dots, u_{ k + 1 } )
  \in H^{k+2}
$,
$
  Y, Z \in \mathcal{L}^p( \P; H )
$,
$ t \in (0,T] $
that
\begin{equation}
\label{eq:Lip.G.n}
  \left\|
    \mathcal{G}^{ \mathbf{u} }_l(t, Y )
    -
    \mathcal{G}^{ \mathbf{u} }_l(t, Z )
  \right\|_{
    \mathcal{L}^p( \P; V_{l,0} )
  }
\leq 
  \left| G_l \right|_{
    \Cb{1}( H, V_{l,0} )
  }
  \left\| Y - Z \right\|_{
    \mathcal{L}^p( \P; H )
  }
\end{equation}
and 
\begin{equation}
\label{eq:singular.G.n}
\begin{split}
& \big\|
    \mathcal{G}^{ \mathbf{u} }_l( t , 0 )
  \big\|_{\mathcal{L}^p(\P;V_{l,0})}
\\&\leq
  \smallsum_{ 
    \varpi \in \Pi_{ k + 1 }^{ * } 
  }
    \big\|
      G_l^{ ( \#_\varpi ) }( X_t^{ 0, u_0 } )
      \big(
        X_t^{ \#_{I^\varpi_1}, [ \mathbf{u} ]_1^{ \varpi } }
        ,
        X_t^{ \#_{I^\varpi_2}, [ \mathbf{u} ]_2^{ \varpi } }
        ,
        \dots
        ,
        X_t^{ \#_{I^\varpi_{\#_\varpi}}, [\mathbf{u} ]_{ \#_\varpi }^{ \varpi } }
      \big)
    \big\|_{
      \mathcal{L}^p( \P; V_{l,0} ) 
    }
\\
& \leq
  \smallsum_{ \varpi \in \Pi_{ k + 1 }^{ * } }
  | G_l |_{ 
    \Cb{ \#_\varpi }( H, V_{l,0} ) 
  }
    \smallprod_{ i = 1 }^{ \#_\varpi }
    \big\|
      X_t^{
        \#_{I^\varpi_i}, 
        [ \mathbf{u} ]_i^{ \varpi }
      }
    \big\|_{
      \mathcal{L}^{ p \, \#_\varpi }( \P ; H )
    }
\leq
  \bar{L}^{ \mathbf{u}, p }_l
  .
\end{split}
\end{equation}
We can hence apply item~(i) of Theorem~2.9 in~\cite{AnderssonJentzenKurniawan2016arXiv} 
(with 
$
  H=H
$, 
$
  U=U
$, 
$ T = T $, 
$ \eta = \eta $, 
$ p = p $, 
$ \alpha = 0 $, 
$
  \hat{\alpha} = 0
$,
$ \beta = 0 $, 
$
  \hat{\beta} 
 = 
  0
$,
$
  L_0 = |F|_{ \Cb{1}( H, H ) }
$, 
$
  \hat{L}_0 = \bar{L}_0^{ \mathbf{u}, p }
$,
$
  L_1 = |B|_{ \Cb{1}( H, HS(U,H) ) }
$, 
$
  \hat{L}_1 = \bar{L}_1^{ \mathbf{u}, p }
$,
$
  W=W
$, 
$ A=A $,
$
  \mathbf{F} = \mathcal{G}^{ \mathbf{u} }_0
$,
$
  \mathbf{B} = \mathcal{G}^{ \mathbf{u} }_1
$,
$
  \delta = 0
$, 
$
  \lambda = 
  0
$, 
$
  \xi = 
  ( \Omega \ni \omega \mapsto 0 \in H )
$
for 
$
  \mathbf{u}
  \in
  H^{k+2}
$, 
$ p \in [2,\infty) $
in the notation of Theorem~2.9 in~\cite{AnderssonJentzenKurniawan2016arXiv}) 
to obtain that there exist up-to-modifications unique
$ ( \mathcal{F}_t )_{ t \in [0,T] } $/$ \mathcal{B}(H) $-predictable stochastic processes
$
  X^{k+1,\mathbf{u}} \colon [0,T] \times \Omega \to H
$, 
$
  \mathbf{u} \in H^{k+2}
$, 
which fulfill for all 
$ p \in [2,\infty) $, 
$
  \mathbf{u} = ( u_0, u_1, \ldots, u_{k+1} ) \in
    H^{k+2}
$, 
$ t \in [0,T] $ 
that 
$
  \sup_{ s \in [0,T] }
  \E[ \|X^{k+1,\mathbf{u}}_s\|^p_H ]
  < \infty
$ 
and 
\begin{equation}
\label{eq:SEE2_Proof}
\begin{split}
&
  [X_t^{k+1,\mathbf{u}}]_{ \P, \mathcal{B}(H) }
=
  \int_0^t
    e^{ ( t - s ) A }
    \mathcal{G}^{ \mathbf{u} }_0( s, X_s^{ k+1, \mathbf{u} } )
  \, {\bf ds}
  +
  \int_0^t
    e^{ ( t - s ) A }
    \mathcal{G}^{ \mathbf{u} }_1( s, X_s^{ k+1, \mathbf{u} } )
  \, \diffns W_s
\\ & 
  =
  \int_0^t
    e^{ ( t - s ) A }
      \sum_{ \varpi\in \Pi_{ k + 1 } }
      F^{ ( \#_\varpi ) }( X_s^{ 0, u_0 } )
      \big(
        X_s^{ \#_{I^\varpi_1}, [ \mathbf{u} ]_1^{ \varpi } }
        ,
        X_s^{ \#_{I^\varpi_2}, [ \mathbf{u} ]_2^{ \varpi } }
        ,
        \dots
        ,
        X_s^{ \#_{I^\varpi_{\#_\varpi}}, [\mathbf{u} ]_{ \#_\varpi }^{ \varpi } }
      \big)
  \, { \bf ds }
\\ & \quad
  +
  \int_0^t
    e^{ ( t - s ) A }
      \sum_{ \varpi\in \Pi_{ k + 1 } }
      B^{ ( \#_\varpi ) }( X_s^{ 0, u_0 } )
      \big(
        X_s^{ \#_{I^\varpi_1}, [ \mathbf{u} ]_1^{ \varpi } }
        ,
        X_s^{ \#_{I^\varpi_2}, [ \mathbf{u} ]_2^{ \varpi } }
        ,
        \dots
        ,
        X_s^{ \#_{I^\varpi_{\#_\varpi}}, [\mathbf{u} ]_{ \#_\varpi }^{ \varpi } }
      \big)
  \, \diffns W_s.
\end{split}
\end{equation}
This proves~\eqref{eq:SEE.3} in the case $k+1$. 
Induction hence establishes~\eqref{eq:SEE.3}. 
The proof of item~\eqref{item:lem_derivative:existence} is thus completed.

For our proof of
items~\eqref{item:lem_derivative:a_priori}--\eqref{item:time.derivative.representation}
we introduce further notation.
Let 
$ 
  X^{k,\mathbf{u}} \colon [0,T] \times \Omega \to H
$, 
$
  \mathbf{u} \in H^{k+1}
$, 
$ k \in \{0,1,\ldots,n\} $, 
be 
$
  (\mathcal{F}_t)_{ t \in [0,T] }
$/$ \mathcal{B}(H) $-predictable stochastic processes which fulfill for all 
$
  k \in \{ 0, 1, \dots, n \}
$, 
$ p \in [2,\infty) $, 
$
  \mathbf{u} = ( u_0, u_1, \ldots, u_k ) \in
    H^{k+1}
$, 
$ t \in [0,T] $ 
that 
$
  \sup_{ s \in [0,T] }
  \E[ \|X^{k,\mathbf{u}}_s\|^p_H ]
  < \infty
$ 
and 
\begin{equation}
\label{eq:proof.derivatives}
\begin{split}
&
  [
  X_t^{k,\mathbf{u}}
  -
  e^{tA}
  \, \mathbbm{1}_{ \{ 0, 1 \} }(k) \, u_k  
  ]_{ \P,\mathcal{B}(H) }
\\ &
  =
  \int_0^t
    e^{ ( t - s ) A }
    \bigg[
      \mathbbm{1}_{ \{ 0 \} }(k)
      \,
      F(X_s^{0,u_0})
      +
      \sum_{ \varpi\in \Pi_k }
      F^{ ( \#_\varpi ) }( X_s^{ 0, u_0 } )
      \big(
        X_s^{ \#_{I^\varpi_1}, [ \mathbf{u} ]_1^{ \varpi } }
        ,
        X_s^{ \#_{I^\varpi_2}, [ \mathbf{u} ]_2^{ \varpi } }
        ,
        \dots
        ,
        X_s^{ \#_{I^\varpi_{\#_\varpi}}, [\mathbf{u} ]_{ \#_\varpi }^{ \varpi } }
      \big)
    \bigg]
  \,{\bf ds}
\\ &
  +
  \int_0^t
    e^{ ( t - s ) A }
    \bigg[
      \mathbbm{1}_{ \{ 0 \} }(k)
      \,
      B(X_s^{0,u_0})
      +
      \sum_{ \varpi\in \Pi_k }
      B^{ ( \#_\varpi ) }( X_s^{ 0, u_0 } )
      \big(
        X_s^{ \#_{I^\varpi_1}, [ \mathbf{u} ]_1^{ \varpi } }
        ,
        X_s^{ \#_{I^\varpi_2}, [ \mathbf{u} ]_2^{ \varpi } }
        ,
        \dots
        ,
        X_s^{ \#_{I^\varpi_{\#_\varpi}}, [\mathbf{u} ]_{ \#_\varpi }^{ \varpi } }
      \big)
    \bigg]
  \, \diffns W_s
  ,
\end{split}
\end{equation}
let 
$
  L^{ \boldsymbol{\delta} }_{\varpi, p}
  \in [0,\infty]
$, 
$
  \varpi \in \mathcal{P}\big(
  \mathcal{P}(\{ 1, 2, \ldots, k \})
  \setminus \{\emptyset\}
  \big)
$, 
$
  \boldsymbol{\delta} \in \deltaset{k} 
$,
$ p \in (0,\infty) $, 
$
  k \in \{ 1, 2, \ldots, n \}
$, 
be the extended real numbers which satisfy for all 
$
  k \in \{ 1, 2, \ldots, n \}
$, 
$ p \in (0,\infty) $, 
$
  \boldsymbol{\delta}=
  (\delta_1, \delta_2, \ldots, \delta_k) \in \deltaset{k} 
$, 
$
  \varpi \in \mathcal{P}\big(
  \mathcal{P}(\{ 1, 2, \ldots, k \})
  \setminus \{\emptyset\}
  \big)
  \setminus \{\emptyset\}
$
that 
$
  L^{ \boldsymbol{\delta} }_{\emptyset,p}
  =
  1
$ 
and 
\begin{equation}
  L^{ \boldsymbol{\delta} }_{\varpi,p}
  =
  \prod_{ I\in\varpi }
    \sup_{\mathbf{u}=(u_i)_{i \in I\cup\{0\}}\in(\times_{i\in I\cup\{0\}}H^{[i]})}\,
    \sup_{ t\in (0,T] }
    \left[
    \frac{
      t^{ \iota^{ \boldsymbol{\delta} }_{ I 
      } }\,
      \|
        X^{ \#_I, \mathbf{u} }_t
      \|_{ \lpn{p}{\P}{H} }
    }{
      \prod_{ i\in I }
      \| u_i \|_{ H_{ -\delta_i } }
    }
    \right],
\end{equation}
let 
$
\tilde{L}_p
\in [0,\infty]
$, 
$ p \in (0,\infty) $, 
be the extended real numbers which satisfy for all 
$ p \in (0,\infty) $
that 
\begin{equation}
\begin{split}
\tilde{L}_p
=
\sup_{ u_0 \in H }
\sup_{ u_1 \in \nzspace{H} }
\sup_{ t\in (0,T] }
\left[
\frac{
	\|
	X^{0,u_0+u_1}_t - X^{0,u_0}_t
	\|_{ \lpn{p}{\P}{H} }
}{
\| u_1 \|_H
}
\right],
\end{split}
\end{equation} 
let 
$ \hat{L}^{ \boldsymbol{\delta}, \mathbf{u}, p }_{k,l} \in [0,\infty] $,
$
  \mathbf{u}
  \in H^{k+1}
$, 
$
  \boldsymbol{\delta} \in \deltaset{k}
$,
$ p \in (0,\infty) $,
$ l \in \{0,1\} $, 
$ k \in \{1,2,\ldots,n\} $, 
be the extended real numbers which satisfy for all 
$ k \in \{1,2,\ldots,n\} $, 
$ l \in \{0,1\} $, 
$ p \in (0,\infty) $,
$
  \boldsymbol{\delta} \in \deltaset{k}
$,
$
  \mathbf{u} = ( u_0, u_1, \dots, u_k )
  \in H^{k+1}
$ 
that 
\begin{equation}
\begin{split}
  \hat{L}^{ \boldsymbol{\delta}, \mathbf{u}, p }_{k,l}
  &=
    | T \vee 1 |^{
       \lfloor k / 2 \rfloor 
       \min\{ 1-\alpha, \nicefrac{1}{2}-\beta \}
    }
\\ & \quad
  \cdot
    \sum_{ \varpi \in \Pi_k^{ * } }
    | G_l |_{ 
      \Cb{ \#_\varpi }( H, V_{l,r_l} ) 
    }
      \prod_{ i = 1 }^{ \#_\varpi }
        \sup_{ t \in (0,T] }
      \Big[
        t^{
          \iota_{ I^{ \varpi }_i 
          }^{ \boldsymbol{\delta} }
        }
        \|
          X_t^{
            \#_{ I^\varpi_i }, 
            [ \mathbf{u} ]_i^{ \varpi }
            }
        \|_{
          \mathcal{L}^{ p \, \#_\varpi }( \P ; H )
        }
      \Big]
    ,
\end{split}
\end{equation}
for every 
$
  k \in \{ 1, 2, \ldots, n \}
$, 
$ l \in \{0,1\} $, 
$
  \mathbf{u} = ( u_0, u_1, \dots, u_k ) 
  \in H^{k+1}
$ 
let 
$ 
  \mathbf{G}_{ k, l }^{ \mathbf{u} } \colon [0,T] \times \Omega \times H \to V_{l,0} 
$ 
and 
$
  \mathbf{\bar{G}}^{ \mathbf{u} }_{ k, l }
  \colon
  [0,T] \times \Omega \times H
  \to
  V_{l,0}
$
be the functions which satisfy 
for all 
$ x \in H $,
$ t \in [0,T] $ 
that 
\begin{equation}
\label{eq:G.def}
  \mathbf{G}^{ \mathbf{u} }_{ k, l }(t,x) =
  G'_l( X^{0,u_0}_t ) x
  +
  {\smallsum_{ \varpi \in \Pi^*_k }}
  G^{ (\#_\varpi) }_l( X^{0,u_0}_t )
  \big( 
        X_t^{ \#_{I^\varpi_1}, [ \mathbf{u} ]_1^{ \varpi } }
        ,
        X_t^{ \#_{I^\varpi_2}, [ \mathbf{u} ]_2^{ \varpi } }
        ,
        \dots
        ,
        X_t^{ \#_{I^\varpi_{\#_\varpi}}, [\mathbf{u} ]_{ \#_\varpi }^{ \varpi } }
  \big)
\end{equation}
and 
\begin{equation}
\label{eq:barG.def}
\begin{split}
&
  \mathbf{\bar{G}}^{ \mathbf{u} }_{ k,l }(t,x)
  =
\\&
\begin{cases}
  \int^1_0
  G'_l\big( X^{0,u_0}_t + \rho [X^{ 0, u_0 + u_1 }_t-X^{ 0, u_0 }_t] \big) \,x
  \, d\rho
  &\colon k=1
\\\vspace{-10pt}
\\
\begin{split}
&
    G'_l( X_t^{ 0, u_0 } ) \, x
\\&+
    {\textstyle\int}_0^1 \,
    G_l''\big(
      X_t^{ 0, u_0 } + \rho [ X^{0,u_0+u_{k}}_t - X^{0,u_0}_t ]
    \big)
    \big(
      X_t^{ k-1, \theta^k_1( \mathbf{u} ) } ,
      X^{0,u_0+u_{k}}_t - X^{0,u_0}_t
    \big)
  \, \diffns\rho
\\
&
  +
  \textstyle
  \smallsum_{
    \varpi \in \Pi_{k-1}^{ * }
  }
  \Big[
    {\textstyle\int}^1_0 \,
    G_l^{ ( \#_\varpi + 1 ) }\big(
      X_t^{ 0, u_0 } 
      + 
      \rho [X^{0,u_0+u_{k}}_t - X^{0,u_0}_t]
    \big)
    \big(
        X_t^{ \#_{I^\varpi_1}, [ \theta^k_1(\mathbf{u}) ]_1^{ \varpi } }
        ,
\\&\quad
        X_t^{ \#_{I^\varpi_2}, [ \theta^k_1(\mathbf{u}) ]_2^{ \varpi } }
        ,
        \dots
        ,
        X_t^{ \#_{I^\varpi_{\#_\varpi}}, [ \theta^k_1(\mathbf{u}) ]_{ \#_\varpi }^{ \varpi } }
      ,
      X^{0,u_0+u_{k}}_t - X^{0,u_0}_t
    \big)
    \, \diffns\rho
\\&+
  G^{(\#_\varpi)}_l( X^{0,u_0}_t )\big(
        X_t^{ \#_{I^\varpi_1}, [ \theta^k_1(\mathbf{u}) ]_1^{ \varpi } }
        ,
        X_t^{ \#_{I^\varpi_2}, [ \theta^k_1(\mathbf{u}) ]_2^{ \varpi } }
        ,
        \dots
        ,
        X_t^{ \#_{I^\varpi_{\#_\varpi}}, [ \theta^k_1(\mathbf{u}) ]_{ \#_\varpi }^{ \varpi } }
  \big)
\\&-
  G^{(\#_\varpi)}_l( X^{0,u_0}_t )\big(
        X_t^{ \#_{I^\varpi_1}, [ \theta^k_0(\mathbf{u}) ]_1^{ \varpi } }
        ,
        X_t^{ \#_{I^\varpi_2}, [ \theta^k_0(\mathbf{u}) ]_2^{ \varpi } }
        ,
        \dots
        ,
        X_t^{ \#_{I^\varpi_{\#_\varpi}}, [ \theta^k_0(\mathbf{u}) ]_{ \#_\varpi }^{ \varpi } }
  \big)
  \Big]
\end{split}
  &\colon k > 1
\end{cases}
,
\end{split}
\end{equation}
and for every 
$
  k \in \{ 1, 2, \ldots, n \}
$, 
$
  p \in (0,\infty)
$ 
let 
$
  d_{k,p}
  \colon
  H^2\rightarrow[0,\infty]
$ 
and 
$
  \tilde{d}_{k,p}
  \colon
  H \times (\nzspace{H}) \rightarrow[0,\infty]
$
be the functions which satisfy for all 
$ x,y \in H $, 
$ v \in \nzspace{H} $ 
that 
\begin{equation}
\begin{split}
  d_{k,p}(x,y)
  =
  \sup\limits_{
    \mathbf{u}=(u_1,u_2,\ldots,u_k)
    \in (\nzspace{H})^k
  }
  \sup\limits_{ t\in(0,T] }
  \left[
    \frac{
      \|
        X^{ k, (x,\mathbf{u}) }_t - X^{ k, (y,\mathbf{u}) }_t
      \|_{ \lpn{ p }{ \P }{ H } }
    }{
      \prod^k_{i = 1}
      \| u_i \|_H
    }
  \right]
\end{split}
\end{equation}
and 
\begin{equation}
\begin{split}
&
  \tilde{d}_{k,p}(x,v)
\\ & =
\begin{cases}
  \sup\limits_{ t\in(0,T] }
  \left[
  \frac{
  	\|
  	X^{0,x+v}_t - X^{0,x}_t - X^{ 1, (x,v) }_t
  	\|_{ \lpn{ p }{ \P }{ H } }
  }{
  \| v \|_H
}
\right]  
&\colon k = 1
\\
  \sup\limits_{
    \mathbf{u}=(u_1,u_2,\ldots,u_{k-1})
    \in (\nzspace{H})^{k-1}
  }
  \sup\limits_{ t\in(0,T] }
  \left[
    \frac{
      \|
        X^{ k-1, (x+v,\mathbf{u}) }_t - X^{ k-1, (x,\mathbf{u}) }_t - X^{ k, (x,\mathbf{u},v) }_t
      \|_{ \lpn{ p }{ \P }{ H } }
    }{
      \|v\|_H
      \prod^{k-1}_{i = 1}
      \| u_i \|_H
    }
  \right]
  &\colon k > 1
\end{cases}
.
\end{split}
\end{equation}
In the next step we prove item~\eqref{item:lem_derivative:a_priori} and the fact that for all 
$ k \in \{1,2,\ldots,n\} $, 
$ p \in [2,\infty) $, 
$ x \in H $, 
$ t \in [0,T] $
it holds that
\begin{equation}
\label{eq:k.linear}
  H^k \ni \mathbf{u} 
  \mapsto [X^{k,(x,\mathbf{u})}_t]_{\P,\mathcal{B}(H)} 
  \in \lpnb{p}{\P}{H}
\end{equation}
is a $k$-linear function.

We prove item~\eqref{item:lem_derivative:a_priori} and~\eqref{eq:k.linear} by induction on $k\in\{1,2,\ldots,n\}$. 
Note that for all 
$ l \in \{0,1\} $, 
$ p \in [2,\infty) $,
$
  \mathbf{u}
  \in H^2
$,
$
  Y, Z \in \mathcal{L}^p( \P; H )
$,
$ t \in (0,T] $
it holds that
\begin{equation}
\label{eq:exist.cond}
\begin{split}
  \|
    \mathbf{G}^{ \mathbf{u} }_{1,l}(t, Y )
    -
    \mathbf{G}^{ \mathbf{u} }_{1,l}(t, Z )
  \|_{
    \mathcal{L}^p( \P; V_{l,r_l} )
  }
&\leq 
  \left| G_l \right|_{
    \Cb{1}( H, V_{l,r_l} )
  }
  \left\| Y - Z \right\|_{
    \mathcal{L}^p( \P; H )
  }
\qquad\text{and}
\\
    \|\mathbf{G}^{ \mathbf{u} }_{1,l}( t , 0 )\|_{
        \mathcal{L}^p( \P; V_{l,r_l} )
      }
&=
  0
  .
\end{split}
\end{equation}
Moreover, observe that~\eqref{eq:proof.derivatives} and~\eqref{eq:G.def} ensure that for all 
$
  \mathbf{u} = ( u_0, u_1 )
  \in H^2
$, 
$ t \in [0,T] $
it holds that
\begin{equation}
\label{eq:exist.sol}
\begin{split}
  [X_t^{1,\mathbf{u}}]_{ \P, \mathcal{B}(H) }
& =
  [e^{tA} u_1]_{ \P, \mathcal{B}(H) }
  +
  \int_0^t
    e^{ ( t - s ) A }
    \mathbf{G}^{ \mathbf{u} }_{1,0}( s, X_s^{ 1, \mathbf{u} } )
  \, {\bf ds}
  +
  \int_0^t
    e^{ ( t - s ) A }
    \mathbf{G}^{ \mathbf{u} }_{1,1}( s, X_s^{ 1, \mathbf{u} } )
  \, \diffns W_s
  .
\end{split}
\end{equation}
Combining~\eqref{eq:exist.cond}--\eqref{eq:exist.sol} with 
items~(i)--(ii) of Theorem~2.9 in~\cite{AnderssonJentzenKurniawan2016arXiv} 
(with 
$
  H=H
$, 
$
 U=U
$, 
$ T = T $, 
$ \eta = \eta $, 
$ p = p $, 
$ \alpha = \alpha $, 
$
  \hat{\alpha} = 0 
$, 
$ \beta = \beta $, 
$
  \hat{\beta} = 0 
$,
$
  L_0 = |F|_{ \Cb{1}( H, H_{-\alpha} ) }
$, 
$
  \hat{L}_0 = 0
$, 
$
  L_1 = |B|_{ \Cb{1}( H, HS( U, H_{-\beta} ) ) }
$, 
$
  \hat{L}_1 = 0
$, 
$
  W=W
$, 
$ A=A $,
$
  \mathbf{F} = 
  \big(
  [0,T]\times\Omega\times H \ni (t,\omega,x) \mapsto
  \mathbf{G}^{ \mathbf{u} }_{1,0}(t,\omega,x) \in H_{-\alpha}
  \big)
$,
$
  \mathbf{B} = 
  \big(
  [0,T]\times\Omega\times H \ni (t,\omega,x) \mapsto
  ( U \ni u \mapsto
  \mathbf{G}^{ \mathbf{u} }_{1,1}(t,\omega,x) u \in H_{-\beta} 
  ) \in HS(U,H_{-\beta})
  \big)
$,
$
  \delta = \delta
$,
$
  \lambda = 
  \delta
$,  
$
  \xi = ( \Omega \ni \omega \mapsto u_1 \in H_{-\delta} )
$
for  
$
  \mathbf{u} = (u_0,u_1)
  \in H^2 
$, 
$
  \delta \in [ 0 , \nicefrac{1}{2} )
$, 
$ p \in [2,\infty) $ 
in the notation of Theorem~2.9 in~\cite{AnderssonJentzenKurniawan2016arXiv}) 
implies that for all
$ p \in [2,\infty) $, 
$ \delta \in [ 0 , \nicefrac{1}{2} ) $
it holds that 
\begin{equation}
\label{eq:base_case_apriori}
\begin{split}
&  \sup_{ u_0 \in H }
  \sup_{ u_1 \in \nzspace{H} }
  \sup_{ t \in (0,T] }
  \left[
  \frac{
    t^{ 
      \delta
    }
    \|
      X_t^{ 1, (u_0,u_1) }
    \|_{
      \mathcal{L}^p( \P; H )
    }
  }{
    \| u_1 \|_{ H_{ -\delta } }
  }
  \right]
\\&\leq
  \Theta_{ A, \eta, p, T }^{
    \alpha , 
    \beta ,
    \delta
  }( |F|_{ \Cb{1}( H, H_{-\alpha} ) } , |B|_{\Cb{1}( H, HS( U, H_{-\beta} ) )} )
  \sup_{u_0\in H} \sup_{ u_1 \in \nzspace{H} }
  \left[
  \frac{
    \sup_{ t \in (0,T] }
    (
      t^\delta \, \|e^{tA} u_1\|_H
    )
  }{
    \| u_1 \|_{ H_{ -\delta } }
  }
  \right]
\\&\leq
  \bigg[
    \sup_{ t \in (0,T] }
    t^\delta \, \| (\eta-A)^\delta e^{tA} \|_{L(H)}
  \bigg] \,
  \Theta_{ A, \eta, p, T }^{
    \alpha , 
    \beta ,
    \delta
  }( |F|_{ \Cb{1}( H, H_{-\alpha} ) } , |B|_{\Cb{1}( H, HS( U, H_{-\beta} ) )} )
\\&=
      \chi_{ A, \eta }^{ \delta, T }\,
  \Theta_{ A, \eta, p, T }^{
    \alpha , 
    \beta ,
    \delta
  }( |F|_{ \Cb{1}( H, H_{-\alpha} ) } , |B|_{\Cb{1}( H, HS( U, H_{-\beta} ) )} )
  < \infty
  .
\end{split}
\end{equation}
This proves item~\eqref{item:lem_derivative:a_priori} in the base case $ k=1 $.
Next we observe that~\eqref{eq:proof.derivatives} shows that for all 
$ p \in [2,\infty) $, 
$ x, u, \tilde{u} \in H $, 
$ \lambda \in \R $, 
$ t \in [0,T] $ 
it holds that 
$
  \sup_{ s\in[0,T] }
  \E\big[
    \|X^{ 1, (x,u) }_s\|^p_H
    +
    \|X^{ 1, (x,\tilde{u}) }_s\|^p_H
  \big]
  < \infty
$
and 
\begin{equation}
\begin{split}
&
  [ X^{ 1, (x,u) }_t + \lambda X^{ 1, (x,\tilde{u}) }_t ]_{ \P, \mathcal{B}(H) }
  =
  [ e^{tA} ( u + \lambda \tilde{u} ) ]_{ \P, \mathcal{B}(H) }
\\&+
  \int^t_0
  e^{(t-s)A}
  F'( X^{0,x}_s ) ( X^{ 1, (x,u) }_s + \lambda X^{ 1, (x, \tilde{u}) }_s )
  \, { \bf ds }
+
  \int^t_0
  e^{(t-s)A}
  B'( X^{0,x}_s ) ( X^{ 1, (x,u) }_s + \lambda X^{ 1, (x, \tilde{u}) }_s )
  \, dW_s
  .
\end{split}
\end{equation}
Item~\eqref{item:lem_derivative:existence} therefore ensures for all 
$ x, u, \tilde{u} \in H $, 
$ \lambda \in \R $, 
$ t \in [0,T] $
that
\begin{equation}
  [X^{ 1, ( x, u + \lambda \tilde{u} ) }_t]_{\P,\mathcal{B}(H)}
  =
  [X^{ 1, (x,u) }_t + \lambda X^{ 1, (x,\tilde{u}) }_t]_{\P,\mathcal{B}(H)}.
\end{equation}
This proves~\eqref{eq:k.linear} in the base case $ k =1 $.
For the induction step $ \{1,2,\ldots,n-1\} \ni k \to k+1 \in \{2,3,\ldots,n\} $ of
item~\eqref{item:lem_derivative:a_priori} 
and~\eqref{eq:k.linear} 
assume that there exists a natural number $ k \in \{1,2,\ldots,n-1\} $ 
such that item~\eqref{item:lem_derivative:a_priori} 
and~\eqref{eq:k.linear} hold for 
$k=1$, $k=2$, $\ldots\,$, $k=k$. 
This ensures that for all 
$ l \in \{0,1\} $, 
$ p \in [2,\infty) $,
$
  \boldsymbol{\delta} \in \deltaset{k+1}
$,
$
  \mathbf{u}
  \in H^{k+2}
$ 
it holds that 
\begin{equation}
\label{eq:hatL.finite}
  \hat{L}^{\boldsymbol{\delta},\mathbf{u},p}_{k+1,l}
  < \infty.
\end{equation}
This and H\"{o}lder's inequality imply that for all 
$ l \in \{0,1\} $, 
$ p \in [2,\infty) $,
$
  \boldsymbol{\delta}=
  (\delta_1, \delta_2, \dots, \delta_{ k + 1 }) \in \deltaset{k+1}
$,
$
  \mathbf{u} = ( u_0, u_1, \dots, u_{ k + 1 } )
  \in H^{k+2}
$,
$
  Y, Z \in \mathcal{L}^p( \P; H )
$,
$ t \in (0,T] $
it holds that
\begin{equation}
\label{eq:L0_k}
  \big\|
    \mathbf{G}^{ \mathbf{u} }_{k+1,l}(t, Y )
    -
    \mathbf{G}^{ \mathbf{u} }_{k+1,l}(t, Z )
  \big\|_{
    \mathcal{L}^p( \P; V_{l,r_l} )
  }
\leq 
  \left| G_l \right|_{
    \Cb{1}( H, V_{l,r_l} )
  }
  \left\| Y - Z \right\|_{
    \mathcal{L}^p( \P; H )
  }
\end{equation}
and 
\begin{equation}
\label{eq:hatL0_k}
\begin{split}
& \big\|
    \mathbf{G}^{ \mathbf{u} }_{k+1,l}( t , 0 )
  \big\|_{\mathcal{L}^p(\P;V_{l,r_l})}
\\&\leq
  \sum_{ 
    \varpi \in \Pi_{ k + 1 }^{ * } 
  }
    \big\|
      G_l^{ ( \#_\varpi ) }( X_t^{ 0, u_0 } )
      \big(
        X_t^{ \#_{I^\varpi_1}, [ \mathbf{u} ]_1^{ \varpi } }
        ,
        X_t^{ \#_{I^\varpi_2}, [ \mathbf{u} ]_2^{ \varpi } }
        ,
        \dots
        ,
        X_t^{ \#_{I^\varpi_{\#_\varpi}}, [\mathbf{u} ]_{ \#_\varpi }^{ \varpi } }
      \big)
    \big\|_{
      \mathcal{L}^p( \P; V_{l,r_l} ) 
    }
\\
& \leq
  \sum_{ \varpi \in \Pi_{ k + 1 }^{ * } }
  | G_l |_{ 
    \Cb{ \#_\varpi }( H, V_{l,r_l} ) 
  }
    \prod_{ i = 1 }^{ \#_\varpi }
    \displaystyle
    \big\|
      X_t^{
        \#_{I^\varpi_i},
        [ \mathbf{u} ]_i^{ \varpi }
      }
    \big\|_{
      \mathcal{L}^{ p \, \#_\varpi }( \P ; H )
    }
\\ & =
  \sum_{ \varpi \in \Pi_{ k + 1 }^{ * } }
  \frac{
    | G_l |_{ 
      \Cb{ \#_\varpi }( H, V_{l,r_l} ) 
    }
  }{
    t^{ ( \delta_1 + \delta_2+ \ldots + \delta_{ k + 1 } ) }
  }
  \Bigg(
    \prod\limits_{ i = 1 }^{ \#_\varpi }
    \frac{
      t^{
        \iota_{ I^{ \varpi }_i 
        }^{ \boldsymbol{\delta} }
      }
      \|
      X_t^{
        \#_{I^\varpi_i},
        [ \mathbf{u} ]_i^{ \varpi }
      }
      \|_{
        \mathcal{L}^{ p \, \#_\varpi }( \P ; H )
      }
    }{
      t^{
        \iota_{ I^{ \varpi }_i 
        }^{ \boldsymbol{\delta} }
        -
        ( 
          \delta_{ I^{ \varpi }_{ i, 1 } } 
          + 
          \delta_{ I^{ \varpi }_{ i, 2 } } 
          +
          \ldots 
          +
          \delta_{ I^{ \varpi }_{ i, \#_{I^{ \varpi }_i} } } 
        )
      }
    }
  \Bigg)
\\ & =
  \sum_{ \varpi \in \Pi_{ k + 1 }^{ * } }
  \frac{
    | G_l |_{ 
      \Cb{ \#_\varpi }( H, V_{l,r_l} ) 
    }
  }{
    t^{ ( \delta_1 + \delta_2+ \ldots + \delta_{ k + 1 } ) }
  }
  \Bigg(
    \prod\limits_{ i = 1 }^{ \#_\varpi }
      t^{
        \iota_{ I^{ \varpi }_i 
        }^{ \boldsymbol{\delta} }
      }
      \|
      X_t^{
        \#_{I^\varpi_i},
        [ \mathbf{u} ]_i^{ \varpi }
      }
      \|_{
        \mathcal{L}^{ p \, \#_\varpi }( \P ; H )
      }
      \,
      t^{
        \1_{[2,\infty)}(\#_{I^\varpi_i})
        \min\{ 1-\alpha,\nicefrac{1}{2}-\beta \}
      }
  \Bigg)
\\ & \leq
  \sum_{ \varpi \in \Pi_{ k + 1 }^{ * } }
  \frac{
    | G_l |_{ 
      \Cb{ \#_\varpi }( H, V_{l,r_l} ) 
    }
  }{
    t^{ ( \delta_1 + \delta_2+ \ldots + \delta_{ k + 1 } ) }
  }
    \prod\limits_{ i = 1 }^{ \#_\varpi }
    \displaystyle
    \Bigg[
      t^{
        \iota_{ I^{ \varpi }_i
        }^{ \boldsymbol{\delta} }
      }
      \big\|
      X_t^{
        \#_{I^\varpi_i},
        [ \mathbf{u} ]_i^{ \varpi }
      }
      \big\|_{
        \mathcal{L}^{ p \, \#_\varpi }( \P ; H )
      } \,
      | T \vee 1 |^{
          \mathbbm{1}_{ [2,\infty) }( \#_{I^{ \varpi }_i} )
          \min\{1-\alpha,\nicefrac{1}{2}-\beta\}
        }
    \Bigg]
\\ & \leq
  \left[
  \sum_{ \varpi \in \Pi_{ k + 1 }^{ * } }
  | G_l |_{ 
    \Cb{ \#_\varpi }( H, V_{l,r_l} ) 
  }
    \prod\limits_{ i = 1 }^{ \#_\varpi }
    \displaystyle
      \sup_{ s \in (0,T] }
    \Big[
      s^{
        \iota_{ I^{ \varpi }_i
        }^{ \boldsymbol{\delta} }
      }
      \|
      X_s^{
        \#_{I^\varpi_i},
        [ \mathbf{u} ]_i^{ \varpi }
      }
      \|_{
        \mathcal{L}^{ p \, \#_\varpi }( \P ; H )
      }
    \Big]
  \right] 
\\ & \quad
  \cdot 
  | T \vee 1 |^{
     \lfloor ( k + 1 ) / 2 \rfloor
     \min\{1-\alpha,\nicefrac{1}{2}-\beta\}
  } \,
  t^{
    - ( \delta_1 + \delta_2 + \ldots + \delta_{ k + 1 } ) 
  }
=
  \hat{L}^{ \boldsymbol{\delta}, \mathbf{u}, p }_{k+1,l}
  \,
  t^{
    - ( \delta_1 + \delta_2 + \ldots + \delta_{k+1} ) 
  }
  < \infty.
\end{split}
\end{equation}
In addition, note that~\eqref{eq:proof.derivatives} and~\eqref{eq:G.def} ensure that for all 
$
  \mathbf{u} = ( u_0, u_1, \ldots, u_{k+1} )
  \in H^{k+2}
$, 
$ t \in [0,T] $
it holds that
\begin{equation}
\label{eq:exist.sol.higher}
\begin{split}
  [X_t^{k+1,\mathbf{u}}]_{ \P, \mathcal{B}(H) }
& =
  \int_0^t
    e^{ ( t - s ) A }
    \mathbf{G}^{ \mathbf{u} }_{k+1,0}( s, X_s^{ k+1, \mathbf{u} } )
  \, {\bf ds}
  +
  \int_0^t
    e^{ ( t - s ) A }
    \mathbf{G}^{ \mathbf{u} }_{k+1,1}( s, X_s^{ k+1, \mathbf{u} } )
  \, \diffns W_s
  .
\end{split}
\end{equation}
Combining~\eqref{eq:hatL.finite}--\eqref{eq:exist.sol.higher} with 
items~(i)--(ii) of Theorem~2.9 in~\cite{AnderssonJentzenKurniawan2016arXiv} 
(with 
$
  H=H
$, 
$
  U=U
$, 
$ T = T $, 
$ \eta = \eta $, 
$ p = p $, 
$ \alpha = \alpha $, 
$
  \hat{\alpha} = \sum_{ i = 1 }^{ k + 1 } \delta_i 
$,
$ \beta = \beta $, 
$
  \hat{\beta} 
 = 
  \sum_{ i = 1 }^{ k + 1 } \delta_i
$,
$
  L_0 = |F|_{ \Cb{1}( H, H_{-\alpha} ) }
$, 
$
  \hat{L}_0 = \hat{L}_{k+1,0}^{ \boldsymbol{\delta}, \mathbf{u}, p }
$,
$
  L_1 = |B|_{ \Cb{1}( H, HS( U, H_{-\beta} ) ) }
$, 
$
  \hat{L}_1 = \hat{L}_{k+1,1}^{ \boldsymbol{\delta}, \mathbf{u}, p }
$,
$
  W=W
$, 
$ A=A $,
$
  \mathbf{F} = 
  \big(
  [0,T]\times\Omega\times H \ni (t,\omega,x) \mapsto
  \mathbf{G}^{ \mathbf{u} }_{k+1,0}(t,\omega,x) \in H_{-\alpha}
  \big)
$,
$
  \mathbf{B} = 
  \big(
  [0,T]\times\Omega\times H \ni (t,\omega,x) \mapsto
  (U \ni u \mapsto \mathbf{G}^{ \mathbf{u} }_{k+1,1}(t,\omega,x)u \in H_{-\beta}) \in HS(U,H_{-\beta})
  \big)
$,
$
  \delta = -\nicefrac{1}{2}
$, 
$
  \lambda = 
  \iota^{ \boldsymbol{\delta} }_\N
$, 
$
  \xi = 
  ( \Omega \ni \omega \mapsto 0 \in H )
$
for 
$
  \mathbf{u}
  \in
  H^{k+2}
$, 
$
  \boldsymbol{\delta} = (\delta_1,\delta_2,\ldots,\delta_{k+1}) \in \deltaset{k+1}
$, 
$ p \in [2,\infty) $ 
in the notation of Theorem~2.9 in~\cite{AnderssonJentzenKurniawan2016arXiv}) 
ensures that for all 
$ p \in [2,\infty) $, 
$
  \boldsymbol{\delta}=
  (\delta_1, \delta_2, \dots, \delta_{ k + 1 }) \in \deltaset{k+1}
$, 
$ \mathbf{u} \in H^{k+2} $ 
it holds that 
\begin{equation}
\begin{split}
&
    \sup_{ t\in(0,T] }
    \big[
    t^{ 
  \iota^{ \boldsymbol{\delta} }_\N
    } \,
    \|
      X_t^{ k+1, \mathbf{u} }
    \|_{
      \mathcal{L}^p( \P; H )
    }
  \big]
\leq
  \Theta_{ A, \eta, p, T }^{
    \alpha , 
    \beta ,
  \iota^{ \boldsymbol{\delta} }_\N
  }( |F|_{ \Cb{1}( H, H_{-\alpha} ) } 
  , 
  |B|_{ \Cb{1}( H, HS( U, H_{-\beta} ) ) } )
\\ & \quad
  \cdot
  \bigg[
      \chi^{ \alpha, T }_{ A, \eta } \,
      \hat{L}_{k+1,0}^{ \boldsymbol{\delta}, \mathbf{u}, p }\,
      T^{ \max\{\beta - \alpha + \nicefrac{1}{2},0\} } \,
      \mathbbm{B}\big(
        1-\alpha
        ,
        1 - \smallsum_{ i = 1 }^{ k + 1 } \delta_i
      \big)
\\&\quad+
      \chi^{ \beta, T }_{ A, \eta } \,
      \hat{L}_{k+1,1}^{ \boldsymbol{\delta}, \mathbf{u}, p }\,
      T^{ \max\{\alpha - \beta - \nicefrac{1}{2},0\} }
        \sqrt{\tfrac{p \, ( p - 1 )}{2}} \,
      \big|
        \mathbbm{B}\big(
          1-2\beta
          ,
          1 
          - 
          2 \smallsum_{ i = 1 }^{ k + 1 } \delta_i
        \big)
      \big|^{ \nicefrac{1}{2} }
  \bigg]
\\ &
\leq
  | T \vee 1 |^{(k+1)}\,
  \Theta_{ A, \eta, p, T }^{
    \alpha , 
    \beta ,
  \iota^{ \boldsymbol{\delta} }_\N
  }( |F|_{ \Cb{1}( H, H_{-\alpha} ) } 
  , 
  |B|_{ \Cb{1}( H, HS( U, H_{-\beta} ) ) } )
\\ & \quad
  \cdot
    \sum_{ \varpi \in \Pi_{ k + 1 }^{ * } }
  \bigg[
    \chi^{ \alpha, T }_{ A, \eta } \,
    \mathbbm{B}\big(
      1 - \alpha
      ,
      \textstyle
      1 - \sum_{ i = 1 }^{ k + 1 } \delta_i
      \displaystyle
    \big)
    \,
    | F |_{ 
      \Cb{ \#_\varpi }( H, H_{-\alpha} ) 
    }
\\&\quad+
        \chi^{ \beta, T }_{ A, \eta } \,
        \sqrt{
        \tfrac{p \, ( p - 1 )}{2} \,
        \mathbbm{B}\big(
          1 - 2 \beta
          ,
          1
          - 
          2 \smallsum_{ i = 1 }^{ k + 1 } \delta_i
        \big)
      }
      \,
    | B |_{ 
      \Cb{ \#_\varpi }( H, HS( U, H_{-\beta} ) ) 
    }
  \bigg]
\\ & \quad
  \cdot
      \prod\limits_{ i = 1 }^{ \#_\varpi }
      \displaystyle
        \sup_{ t \in (0,T] }
      \Big[
        t^{
          \iota_{ I^{ \varpi }_i 
          }^{ \boldsymbol{\delta} }
        } \,
        \|
          X_t^{
            \#_{I^\varpi_i}, 
            [ \mathbf{u} ]_i^{ \varpi }
          }
        \|_{
          \mathcal{L}^{ p \, \#_\varpi }( \P ; H )
        }
      \Big]
  .
\end{split}
\end{equation}
This implies that for all 
$ p \in [2,\infty) $, 
$
  \boldsymbol{\delta}=
  (\delta_1, \delta_2, \dots, \delta_{ k + 1 }) \in \deltaset{k+1}
$ 
it holds that 
\begin{equation}
\label{eq:induction_step_a_priori0}
\begin{split}
&
    \sup_{ \mathbf{u}=(u_0,u_1,\ldots,u_{k+1}) \in (\times^{k+1}_{i=0} H^{[i]}) }
    \sup_{ t\in(0,T] }
    \left[
    \frac{
    t^{ 
  \iota^{ \boldsymbol{\delta} }_\N
    } \,
    \|
      X_t^{ k+1, \mathbf{u} }
    \|_{
      \mathcal{L}^p( \P; H )
    }
    }{
    \prod^{k+1}_{i=1}
    \|u_i\|_{H_{-\delta_i}}
    }
  \right]
\\ &
\leq
  \max\{T^{(k+1)},1\}\,
  \Theta_{ A, \eta, p, T }^{
    \alpha , 
    \beta ,
  \iota^{ \boldsymbol{\delta} }_\N
  }( |F|_{ \Cb{1}( H, H_{-\alpha} ) } 
  , 
  |B|_{ \Cb{1}( H, HS( U, H_{-\beta} ) ) } )
\\ & \quad
  \cdot
    \sum_{ \varpi \in \Pi_{ k + 1 }^{ * } }
  \bigg[
    \chi^{ \alpha, T }_{ A, \eta } \,
    \mathbbm{B}\big(
      1 - \alpha
      ,
      \textstyle
      1 - \sum_{ i = 1 }^{ k + 1 } \delta_i
      \displaystyle
    \big)
    \,
    | F |_{ 
      \Cb{ \#_\varpi }( H, H_{-\alpha} ) 
    }
\\&\quad+
        \chi^{ \beta, T }_{ A, \eta } \,
        \sqrt{
        \tfrac{p \, ( p - 1 )}{2} \,
        \mathbbm{B}\big(
          1 - 2 \beta
          ,
          1
          - 
          2 \smallsum_{ i = 1 }^{ k + 1 } \delta_i
        \big)
      }
      \,
    | B |_{ 
      \Cb{ \#_\varpi }( H, HS( U, H_{-\beta} ) ) 
    }
  \bigg]
\\ & \quad
  \cdot
      \prod\limits_{ I \in \varpi }
      \displaystyle
        \sup_{ \mathbf{u}=(u_i)_{i\in I \cup \{0\}} \in (\times_{i\in I \cup \{0\}} H^{[i]}) }
        \sup_{ t \in (0,T] }
      \bigg[
      \frac{
        t^{
          \iota_I
          ^{ \boldsymbol{\delta} }
        } \,
        \|
          X_t^{
            \#_I, 
            \mathbf{u}
          }
        \|_{
          \mathcal{L}^{ p \, \#_\varpi }( \P ; H )
        }
    }{
      \prod_{i\in I}
      \|u_i\|_{H_{-\delta_i}}
      }
      \bigg]
  .
\end{split}
\end{equation}
This and the induction hypothesis imply item~\eqref{item:lem_derivative:a_priori} in the case $ k+1 $
and thus complete the induction step for 
item~\eqref{item:lem_derivative:a_priori}.
In the next step we note that for all 
$ \lambda \in \R $, 
$ i \in \{1,2,\ldots,k+1\} $, 
$ \varpi \in \Pi^*_{k+1} $
and all 
$
\mathbf{u}^{(m)}=( {u_0}, u_1, \ldots,u_{i-1},
u^{(m)}_i,u_{i+1},
u_{i+2},\ldots,u_{k+1} )
\in H^{k+2}
$, 
$ m \in \{1,2,3\} $, 
with 
$ u^{(3)}_i = u^{(1)}_i + \lambda u^{(2)}_i  $
it holds that there exists a unique natural number
$ j \in \{1,2,\ldots,\#_\varpi\} $ 
such that there exists a natural number 
$ q \in \{1,2,\ldots,\#_{I^\varpi_j}\} $ 
such that for all 
$ l \in \{1,2,\ldots,\#_\varpi\} \setminus \{j\} $ 
it holds that 
\begin{equation}
\label{eq:multlinear.I}
  I^\varpi_{j,q}=i,\qquad
  [\mathbf{u}^{(1)}]^\varpi_l
  =
  [\mathbf{u}^{(2)}]^\varpi_l
  =
  [\mathbf{u}^{(3)}]^\varpi_l,
\end{equation}
and 
\begin{equation}
\label{eq:multlinear.II}
  [ \mathbf{u}^{(3)} ]^\varpi_j
  =
  ( u_0, u_{ I^\varpi_{ j,1 } }, u_{ I^\varpi_{ j,2 } }, \ldots, u_{ I^\varpi_{ j,q-1 } },
  u^{(1)}_i + \lambda u^{(2)}_i, u_{ I^\varpi_{ j,q+1 } }, u_{ I^\varpi_{ j,q+2 } }, 
  \ldots, 
  u_{ I^\varpi_{ j,\#_{ I^\varpi_j } } })
  .
\end{equation}
In addition, observe that for all 
$ \varpi \in \Pi^*_{k+1} $, 
$ j \in \{1,2,\ldots,\#_\varpi\} $ 
it holds that 
\begin{equation}
\label{eq:multlinear.III}
  \#_{I^\varpi_j} \in \{1,2,\ldots,k\}
  .
\end{equation}
Moreover, observe that the induction hypothesis establishes 
that for all 
$ m \in \{1,2,\ldots,k\} $, 
$ p \in [2,\infty) $, 
$ x \in H $, 
$ t \in [0,T] $ 
it holds that 
\begin{equation}
  H^m \ni \mathbf{u} 
  \mapsto [X^{m,(x,\mathbf{u})}_t]_{\P,\mathcal{B}(H)} 
  \in \lpnb{p}{\P}{H}
\end{equation}
is an $m$-linear function.
Combining~\eqref{eq:multlinear.I} and~\eqref{eq:multlinear.II} 
with~\eqref{eq:multlinear.III} hence assures that for all 
$ \lambda \in \R $, 
$ i \in \{1,2,\ldots,k+1\} $, 
$ \varpi \in \Pi^*_{k+1} $, 
$ t \in [0,T] $ 
and all 
$
\mathbf{u}^{(m)}=( {u_0}, u_1, \ldots,u_{i-1},
u^{(m)}_i,u_{i+1},
u_{i+2},\ldots,u_{k+1} )
\in H^{k+2}
$, 
$ m \in \{1,2,3\} $, 
with 
$ u^{(3)}_i = u^{(1)}_i + \lambda u^{(2)}_i  $
it holds that there exists a unique natural number
$ j \in \{ 1,2,\ldots,\#_\varpi \} $ 
such that for all 
$ l \in \{1,2,\ldots,\#_\varpi\} \setminus \{j\} $ 
it holds that 
\begin{equation}
  i \in I^\varpi_j, \qquad
  X^{\#_{I^\varpi_l},[\mathbf{u}^{(1)}]^\varpi_l}_t
  =
  X^{\#_{I^\varpi_l},[\mathbf{u}^{(2)}]^\varpi_l}_t
  =
  X^{\#_{I^\varpi_l},[\mathbf{u}^{(3)}]^\varpi_l}_t,
\end{equation}
and
\begin{equation}
  [
  X^{\#_{I^\varpi_j},[\mathbf{u}^{(1)}]^\varpi_j}_t
  +
  \lambda X^{\#_{I^\varpi_j},[\mathbf{u}^{(2)}]^\varpi_j}_t
  ]_{\P,\mathcal{B}(H)}
  =
  [
  X^{\#_{I^\varpi_j},[\mathbf{u}^{(3)}]^\varpi_j}_t
  ]_{\P,\mathcal{B}(H)}.
\end{equation}
This shows that for all  
$ \lambda \in \R $, 
$ l \in \{ 0,1 \} $, 
$ i \in \{ 1,2,\ldots,k+1 \} $, 
$ t \in [0,T] $ 
and all 
$
\mathbf{u}^{(m)}=( {u_0}, u_1, \ldots,u_{i-1},
$
$
u^{(m)}_i,u_{i+1},
u_{i+2},\ldots,u_{k+1} )
\in H^{k+2}
$, 
$ m \in \{1,2,3\} $, 
with 
$ u^{(3)}_i = u^{(1)}_i + \lambda u^{(2)}_i  $
it holds that there exist 
$ j_\varpi \in \{ 1,2,\ldots,\#_\varpi \} $, 
$ \varpi \in \Pi^*_{k+1} $, 
such that
\begin{equation}
\begin{split}
&\!\!\!\!
\big[
\mathbf{G}^{ \mathbf{u}^{(3)} }_{ k+1,l }
( t, X^{ k+1, \mathbf{u}^{(1)} }_t + \lambda X^{ k+1, \mathbf{u}^{(2)} }_t )
\big]_{\P,\mathcal{B}(V_{l,0})}
\\&\!\!\!\!=
\Big[
G'_l( X^{0,u_0}_t ) ( X^{ k+1, \mathbf{u}^{(1)} }_t + \lambda X^{ k+1, \mathbf{u}^{(2)} }_t )
+
\smallsum_{ \varpi \in \Pi^*_{k+1} }
G^{ (\#_\varpi) }_l( X^{0,u_0}_t )
\big( 
X^{ \#_{I^\varpi_1}, [\mathbf{u}^{(1)}]^\varpi_1 }_t, 
X^{ \#_{I^\varpi_2}, [\mathbf{u}^{(1)}]^\varpi_2 }_t, 
\ldots, 
\\&\!\!\!\!
X^{ \#_{I^\varpi_{j_\varpi-1}}, [\mathbf{u}^{(1)}]^\varpi_{j_\varpi-1} }_t,
X^{ \#_{I^\varpi_{j_\varpi}}, [\mathbf{u}^{(3)}]^\varpi_{j_\varpi} }_t,
X^{ \#_{I^\varpi_{j_\varpi+1}}, [\mathbf{u}^{(1)}]^\varpi_{j_\varpi+1} }_t,
X^{ \#_{I^\varpi_{j_\varpi+2}}, [\mathbf{u}^{(1)}]^\varpi_{j_\varpi+2} }_t,
\ldots,
X^{ \#_{I^\varpi_{\#_\varpi}}, [\mathbf{u}^{(1)}]^\varpi_{ \#_\varpi } }_t 
\big)  
\Big]_{\P,\mathcal{B}(V_{l,0})}
\\&\!\!\!\!=
\Big[
G'_l( X^{0,u_0}_t ) ( X^{ k+1, \mathbf{u}^{(1)} }_t + \lambda X^{ k+1, \mathbf{u}^{(2)} }_t )
+
\smallsum_{ \varpi \in \Pi^*_{k+1} }
G^{ (\#_\varpi) }_l( X^{0,u_0}_t )
\big( 
X^{ \#_{I^\varpi_1}, [\mathbf{u}^{(1)}]^\varpi_1 }_t, 
X^{ \#_{I^\varpi_2}, [\mathbf{u}^{(1)}]^\varpi_2 }_t, 
\ldots, 
\\&\!\!\!\!
X^{ \#_{I^\varpi_{j_\varpi-1}}, [\mathbf{u}^{(1)}]^\varpi_{j_\varpi-1} }_t,
X^{ \#_{I^\varpi_{j_\varpi}}, [\mathbf{u}^{(1)}]^\varpi_{j_\varpi} }_t
+
\lambda X^{ \#_{I^\varpi_{j_\varpi}}, [\mathbf{u}^{(2)}]^\varpi_{j_\varpi} }_t,
X^{ \#_{I^\varpi_{j_\varpi+1}}, [\mathbf{u}^{(1)}]^\varpi_{j_\varpi+1} }_t,
X^{ \#_{I^\varpi_{j_\varpi+2}}, [\mathbf{u}^{(1)}]^\varpi_{j_\varpi+2} }_t,
\ldots,
\\&\!\!\!\!
X^{ \#_{I^\varpi_{\#_\varpi}}, [\mathbf{u}^{(1)}]^\varpi_{ \#_\varpi } }_t 
\big)  
\Big]_{\P,\mathcal{B}(V_{l,0})}
\\&\!\!\!\!=
\big[
\mathbf{G}^{ \mathbf{u}^{(1)} }_{ k+1,l }( t, X^{ k+1, \mathbf{u}^{(1)} }_t )
+
\lambda \, \mathbf{G}^{ \mathbf{u}^{(2)} }_{ k+1,l }( t, X^{ k+1, \mathbf{u}^{(2)} }_t )
\big]_{\P,\mathcal{B}(V_{l,0})}
.
\end{split}
\end{equation}
This, \eqref{eq:exist.sol.higher}, and Lemma~3.1 in Jentzen \& Pu\v{s}nik~\cite{JentzenPusnik2016arXiv} 
(with 
$ (\Omega,\mathcal{F},\mu) = (\Omega,\mathcal{F},\P) $, 
$ T=t $, 
$
  Y(\omega,s)
  =
  \big\|
  e^{(t-s)A}
  \mathbf{G}^{ \mathbf{u}^{(3)} }_{ k+1,l }
  ( s, \omega, X^{ k+1, \mathbf{u}^{(1)} }_s(\omega) + \lambda X^{ k+1, \mathbf{u}^{(2)} }_s(\omega) )
  -
  e^{(t-s)A}
  \mathbf{G}^{ \mathbf{u}^{(1)} }_{ k+1,l }( s, \omega, X^{ k+1, \mathbf{u}^{(1)} }_s(\omega) )
  -
  \lambda e^{(t-s)A}
  \mathbf{G}^{ \mathbf{u}^{(2)} }_{ k+1,l }
$
$
  ( s, \omega, X^{ k+1, \mathbf{u}^{(2)} }_s(\omega) )
  \big\|^{(l+1)}_{V_{l,0}}
$,
$ Z(\omega,s) = 0 $ 
for 
$ s \in [0,t] $, 
$ t \in (0,T] $, 
$ \omega \in \Omega $, 
$ l \in \{0,1\} $, 
$
\mathbf{u}^{(3)}=( {u_0}, u_1, \ldots,u_{i-1},
u^{(1)}_i+\lambda u^{(2)}_i,u_{i+1},
u_{i+2},\ldots,
u_{k+1} )
$, 
$
\mathbf{u}^{(2)}=( {u_0}, u_1, \ldots,u_{i-1},
u^{(2)}_i,u_{i+1},
u_{i+2},\ldots,
u_{k+1} )
$, 
$
\mathbf{u}^{(1)}=( {u_0}, u_1, \ldots,u_{i-1},
u^{(1)}_i,u_{i+1},
u_{i+2},\ldots,
u_{k+1} )
\in H^{k+2}
$, 
$ i \in \{ 1,2,\ldots,k+1 \} $, 
$ \lambda \in \R $ 
in the notation of Lemma~3.1 in Jentzen \& Pu\v{s}nik~\cite{JentzenPusnik2016arXiv})
prove that for all 
$ \lambda \in \R $,  
$ i \in \{ 1,2,\ldots,k+1 \} $, 
$ t \in [0,T] $ 
and all 
$
\mathbf{u}^{(m)}=( {u_0}, u_1, \ldots,
u_{i-1},
$
$
u^{(m)}_i,u_{i+1},u_{i+2},\ldots,u_{k+1} )
\in H^{k+2}
$, 
$ m \in \{1,2,3\} $, 
with 
$ u^{(3)}_i = u^{(1)}_i + \lambda u^{(2)}_i  $
it holds that 
\begin{equation}
\begin{split}
[X^{ k+1, \mathbf{u}^{(1)} }_t
+
\lambda X^{ k+1, \mathbf{u}^{(2)} }_t]_{\P, \mathcal{B}(H)}
&=
\int^t_0
e^{(t-s)A}
\mathbf{G}^{\mathbf{u}^{(3)}}_{k+1,0}
( s, X^{ k+1, \mathbf{u}^{(1)} }_s + \lambda X^{ k+1, \mathbf{u}^{(2)} }_s )
\, {\bf ds}
\\&\quad+
\int^t_0
e^{(t-s)A}
\mathbf{G}^{\mathbf{u}^{(3)}}_{k+1,1}
( s, X^{ k+1, \mathbf{u}^{(1)} }_s + \lambda X^{ k+1, \mathbf{u}^{(2)} }_s )
\,dW_s
.
\end{split}
\end{equation}
This and item~\eqref{item:lem_derivative:existence} imply for all
$ \lambda \in \R $,  
$ i \in \{ 1,2,\ldots,k+1 \} $, 
$ t \in [0,T] $
and all 
$
\mathbf{u}^{(m)}=( {u_0}, u_1, \ldots,
u_{i-1},
$
$
u^{(m)}_i,
u_{i+1},u_{i+2},\ldots,u_{k+1} )
\in H^{k+2}
$, 
$ m \in \{1,2,3\} $, 
with 
$ u^{(3)}_i = u^{(1)}_i + \lambda u^{(2)}_i  $
that 
\begin{equation}
[X^{ k+1, \mathbf{u}^{(3)} }_t]_{\P,\mathcal{B}(H)}
=
[X^{ k+1, \mathbf{u}^{(1)} }_t
+
\lambda X^{ k+1, \mathbf{u}^{(2)} }_t]_{\P,\mathcal{B}(H)}.
\end{equation}
This proves~\eqref{eq:k.linear} in the case $ k+1 $ 
and hence completes the induction step for~\eqref{eq:k.linear}.
Induction thus completes the proof of 
item~\eqref{item:lem_derivative:a_priori} and~\eqref{eq:k.linear}.

Combining~\eqref{eq:k.linear} with item~\eqref{item:lem_derivative:a_priori} 
establishes item~\eqref{item:thm_derivative}.
Next we prove item~\eqref{item:lem_derivative:a_priori_Lip}. 
We first note that item~\eqref{item:lem_derivative:a_priori} implies that 
for all 
$ k \in \{1,2,\ldots,n\} $, 
$ l \in \{0,1\} $, 
$
  p \in (0,\infty)
$, 
$ \boldsymbol{\delta} \in \deltaset{k} $, 
$
  \varpi \in \mathcal{P}\big(
  \mathcal{P}(\{ 1, 2, \ldots, k \})
  \setminus \{\emptyset\}
  \big)
$, 
$ \mathbf{u} \in H^{k+1} $
it holds that 
\begin{equation}
\label{eq:L.finite}
  L^{ \boldsymbol{\delta} }_{\varpi,p}
  +
  \hat{L}^{\boldsymbol{\delta},\mathbf{u},p}_{k,l}
  < \infty.
\end{equation}
We next apply
the Burkholder-Davis-Gundy type inequality 
in Lemma~7.7 in Da Prato \& Zabczyk~\cite{dz92}, 
\eqref{eq:exist.cond}, \eqref{eq:exist.sol}, \eqref{eq:L0_k}, 
\eqref{eq:hatL0_k}, \eqref{eq:exist.sol.higher}, 
and 
Proposition~2.7 in~\cite{AnderssonJentzenKurniawan2016arXiv} 
(with
$
H=H
$, 
$
U=U
$, 
$ T = T $, 
$ \eta = \eta $, 
$ p = p $, 
$ \alpha = \alpha $, 
$
\hat{\alpha} = 0
$,
$ \beta = \beta $, 
$
\hat{\beta} 
= 
0
$,
$L_0=|F|_{\Cb{1}(H,H_{-\alpha})}$, 
$
  \hat{L}_0
  =
  \hat{L}^{\mathbf{0}_k,\mathbf{y},p}_{k,0}
$, 
$L_1=|B|_{\Cb{1}(H,HS(U,H_{-\beta}))}$, 
$
  \hat{L}_1
  =
  \hat{L}^{\mathbf{0}_k,\mathbf{y},p}_{k,1}
$, 
$
W=W
$, 
$ A=A $,
$
  \mathbf{F} = 
  \big(
  [0,T]\times\Omega\times H \ni (t,\omega,z) \mapsto
  \mathbf{G}_{k,0}^{\mathbf{y}}(t,\omega,z) \in H_{-\alpha}
  \big)
$, 
$
  \mathbf{B} = 
  \big(
  [0,T]\times\Omega\times H \ni (t,\omega,z) \mapsto
  (U\ni u \mapsto \mathbf{G}_{k,1}^{\mathbf{y}}(t,\omega,z)u \in H_{-\beta}) \in HS(U,H_{-\beta})
  \big)
$,  
$ \delta = 0 $, 
$ Y^1 = X^{k,\mathbf{x}} $, $ Y^2 = X^{k,\mathbf{y}} $,
$ \lambda = \lambda $  
for 
$
  \mathbf{x} = ( x, u_1, u_2, \ldots, u_k )
$, 
$
  \mathbf{y} = ( y, u_1, u_2, \ldots, u_k )
  \in H^{k+1}
$, 
$ \lambda \in ( -\infty, \nicefrac{1}{2} ) $, 
$ p \in [2,\infty) $, 
$ k \in \{ 1, 2, \dots, n \} $ 
in the notation of Proposition~2.7 in~\cite{AnderssonJentzenKurniawan2016arXiv})  
to obtain that for all
$ k \in \{ 1, 2, \dots, n \} $, 
$ p \in [2,\infty) $,
$ \lambda \in ( -\infty, \nicefrac{1}{2} ) $, 
$
  \mathbf{x} = ( x, u_1, u_2, \ldots, u_k )
$, 
$
  \mathbf{y} = ( y, u_1, u_2, \ldots, u_k )
  \in H^{k+1}
$
it holds that 
\begin{equation}
\label{eq:initial.perturb.gronwall}
\begin{split}
& 
  \sup_{ t \in (0,T] }
  t^\lambda
  \,
  \big\|
    X_t^{ k, \mathbf{x} }
    -
    X_t^{ k, \mathbf{y} }
  \big\|_{
    \mathcal{L}^p( \P ; H )
  }
\leq
  \Theta_{ A, \eta, p, T }^{ \alpha, \beta, \lambda }
  \big(
    | F |_{ \Cb{1}( H , H_{ - \alpha } ) }
    ,
    | B |_{ \Cb{1}( H , HS( U , H_{ - \beta } ) ) }
  \big)
\\ & \quad
  \cdot
  \sup_{ t \in (0,T] }
  \Bigg[
    t^\lambda \,
    \bigg\|
      \int_0^t
        e^{ ( t - s ) A }
        \left(
          \mathbf{G}_{ k, 0 }^{ \mathbf{x} }( s , X_s^{ k, \mathbf{x} } )
          -
          \mathbf{G}_{ k, 0 }^{ \mathbf{y} }( s , X_s^{ k, \mathbf{x} } )
        \right)
      { \bf ds }
\\ & \quad
      +
      \int_0^t
        e^{ ( t - s ) A }
        \left(
          \mathbf{G}_{ k, 1 }^{ \mathbf{x} }( s , X_s^{ k, \mathbf{x} } )
          -
          \mathbf{G}_{ k, 1 }^{ \mathbf{y} }( s , X_s^{ k, \mathbf{x} } )
        \right)
      \diffns W_s
    \bigg\|_{ L^p( \P; H ) }
  \Bigg]
\\&\leq
  \Theta_{ A, \eta, p, T }^{ 
    \alpha, \beta, \lambda
  }\big(
    | F |_{ \Cb{1}( H , H_{ - \alpha } ) }
    ,
    | B |_{ \Cb{1}( H , HS( U , H_{ - \beta } ) ) }
  \big)
\\ & \quad
  \cdot
  \Bigg[
    \sup_{ t \in (0,T] }
    \Bigg\{
    t^\lambda \,
    \chi^{ \alpha, T }_{ A, \eta }
    \int_0^t
      \tfrac{
      \|
        \mathbf{G}_{ k, 0 }^{ \mathbf{x} }( s , X_s^{ k, \mathbf{x} } )
        -
        \mathbf{G}_{ k, 0 }^{ \mathbf{y} }( s , X_s^{ k, \mathbf{x} } )
      \|_{\mathcal{L}^p(\P;H_{-\alpha})}
      }{
      ( t - s )^{ \alpha }
      }
    \, \diffns s
    \Bigg\}
\\ & \quad
    +
    \sup_{
      t \in (0,T]
    }
    \left\{
    t^\lambda \,
    \chi^{ \beta, T }_{ A, \eta }
    \left[
      \tfrac{ p \, ( p - 1 ) }{ 2 }  
      \int_0^t
      \tfrac{
      \|
        \mathbf{G}_{ k, 1 }^{ \mathbf{x} }( s , X_s^{ k, \mathbf{x} } )
        -
        \mathbf{G}_{ k, 1 }^{ \mathbf{y} }( s , X_s^{ k, \mathbf{x} } )
      \|_{
        \mathcal{L}^p( \P ; HS( U, H_{ - \beta } ) ) 
      }^2
      }{
      ( t - s )^{ 2 \beta }
      }
      \,\diffns s
    \right]^{ \nicefrac{1}{2} }
    \right\}
  \Bigg] .
\end{split}
\end{equation}
Moreover, observe that~\eqref{eq:G.def} ensures that for all 
$ k \in \{ 1, 2, \dots, n \} $,
$ l \in \{ 0, 1 \} $,
$
  \mathbf{x} = 
  ( x, u_1 , u_2, \dots, u_k )
$,
$
  \mathbf{y} = 
  ( y, u_1 , u_2, 
$
$
  \dots, u_k ) 
  \in H^{ k+1 }
$, 
$ t \in [0,T] $
it holds that
\begin{equation}
\label{eq:decompose.perturb}
\begin{split}
&\!\! 
  \mathbf{G}_{ k, l }^{ \mathbf{x} }( t , X_t^{ k, \mathbf{x} } )
  -
  \mathbf{G}_{ k, l }^{ \mathbf{y} }( t , X_t^{ k, \mathbf{x} } )
  =
  \big(
    G_l'( X_t^{ 0, x } ) 
    - 
    G_l'( X_t^{ 0, y } )
  \big)
  X_t^{ k, \mathbf{x} }
\\&\!\!+
  {\smallsum_{ \varpi \in \Pi_k^{ * } }}
  \displaystyle
  \Big[
    \big(
      G^{ ( \#_\varpi ) }_l( X_t^{ 0, x } )
      -
      G^{ ( \#_\varpi ) }_l( X_t^{ 0, y } )
    \big)
    \big(
      X_t^{ \#_{I^\varpi_1}, [ \mathbf{y} ]_1^\varpi}
      ,
      X_t^{ \#_{I^\varpi_2}, [ \mathbf{y} ]_2^\varpi}
      ,
      \dots
      ,
      X_t^{ \#_{I^\varpi_{\#_\varpi}}, [ \mathbf{y} ]_{ \#_\varpi }^{ \varpi } }
    \big)
\\ &    
    \!\!+
    \smallsum^{\#_\varpi}_{i=1}
    G^{ ( \#_\varpi ) }_l( X_t^{ 0, x } )
    \big(
      X_t^{
        \#_{I^\varpi_1}, 
        [ \mathbf{x} ]_1^\varpi
      }
      ,
      X_t^{
        \#_{I^\varpi_2}, 
        [ \mathbf{x} ]_2^\varpi
      }
      ,
      \ldots
      ,
      X_t^{
        \#_{I^\varpi_{i-1}}, 
        [ \mathbf{x} ]_{i-1}^\varpi
      }
      ,
      X_t^{
        \#_{I^\varpi_i}, 
        [ \mathbf{x} ]_i^\varpi
      }
      -
      X_t^{
        \#_{I^\varpi_i}, 
        [ \mathbf{y} ]_i^\varpi
      }
      ,
\\&\quad
      X_t^{
        \#_{I^\varpi_{i+1}}, 
        [ \mathbf{y} ]_{i+1}^\varpi
      }
      ,
      X_t^{ \#_{I^\varpi_{i+2}}, [ \mathbf{y} ]_{i+2}^\varpi}
      ,
      \dots
      ,
      X_t^{ 
        \#_{I^\varpi_{\#_\varpi}}, 
        [ \mathbf{y} ]_{ \#_\varpi }^\varpi
      }
    \big)
  \Big]
  .
\end{split}
\end{equation}
Next note that H\"{o}lder's inequality ensures that for all 
$ k \in \{ 1, 2, \dots, n \} $,
$ l \in \{ 0, 1 \} $,
$ p \in [ 2 , \infty ) $,  
$
  \boldsymbol{\delta}=
  (\delta_1, \delta_2, \dots, \delta_k) \in \deltaset{k} 
$,
$ \varpi \in \Pi_k $,
$
  x \in H
$, 
$
  \mathbf{y} = 
  ( y, u_1 , u_2, \dots, u_k ) 
  \in \times^k_{i=0} H^{[i]}
$, 
$
  t \in (0,T]
$
it holds that
\begin{equation}
\begin{split}
&
    \frac{
    \big\|
    \big(
      G^{ ( \#_\varpi ) }_l( X_t^{ 0, x } )
      -
      G^{ ( \#_\varpi ) }_l( X_t^{ 0, y } )
    \big)
    \big(
      X_t^{ \#_{I^\varpi_1}, [ \mathbf{y} ]_1^\varpi}
      ,
      X_t^{ \#_{I^\varpi_2}, [ \mathbf{y} ]_2^\varpi}
      ,
      \dots
      ,
      X_t^{ \#_{I^\varpi_{\#_\varpi}}, [ \mathbf{y} ]_{ \#_\varpi }^{ \varpi } }
    \big)
    \big\|_{\lpn{p}{\P}{V_{l,r_l}}}
    }{ \prod^k_{ i=1 } \|u_i\|_{ H_{-\delta_i} } }
\\&\leq
    \big\|
      G_l^{ ( \#_\varpi ) }( X_t^{ 0, x } )
      -
      G_l^{ ( \#_\varpi ) }( X_t^{ 0, y } )
    \big\|_{
      \mathcal{L}^{ p ( \#_\varpi + 1 ) }(
        \P ; L^{ ( \#_\varpi ) }( H, V_{l,r_l} ) 
      )
    }
    \prod^{ \#_\varpi }_{ i=1 }
    \frac{
      \|X^{\#_{I^\varpi_i},[\mathbf{y}]^\varpi_i}_t\|_{ \lpn{p(\#_\varpi+1)}{\P}{H} }
    }{
    \Big[
    \prod^{ \#_{I^\varpi_i} }_{ m=1 } \|u_{I^\varpi_{i,m}}\|_{ H_{-\delta_{I^\varpi_{i,m}}} }
    \Big]
    }
\\&=
    \big\|
      G_l^{ ( \#_\varpi ) }( X_t^{ 0, x } )
      -
      G_l^{ ( \#_\varpi ) }( X_t^{ 0, y } )
    \big\|_{
      \mathcal{L}^{ p ( \#_\varpi + 1 ) }(
        \P ; L^{ ( \#_\varpi ) }( H, V_{l,r_l} ) 
      )
    }
    \Bigg[
      \prod_{ I \in \varpi }
      \frac{1}{
        t^{ \iota^{\boldsymbol{\delta}}_I }
      }
    \Bigg]
    \prod^{ \#_\varpi }_{ i=1 }
    \frac{
      t^{ \iota^{\boldsymbol{\delta}}_{ I^\varpi_i } }
      \|X^{\#_{I^\varpi_i},[\mathbf{y}]^\varpi_i}_t\|_{ \lpn{p(\#_\varpi+1)}{\P}{H} }
    }{
    \Big[
    \prod^{ \#_{I^\varpi_i} }_{ m=1 } \|u_{I^\varpi_{i,m}}\|_{ H_{-\delta_{I^\varpi_{i,m}}} }
    \Big]
    }
\\&\leq
  \frac{
    | T \vee 1 |^{ \lfloor k/2 \rfloor \min\{ 1-\alpha, \nicefrac{1}{2} - \beta 
    \} }
  }{
    t^{
      ( \delta_1 + \delta_2 + \ldots + \delta_k )
    }
  }
  \,
    L^{ \boldsymbol{\delta} }_{\varpi, \, p( \#_\varpi + 1 )} \,
    \big\|
      G_l^{ ( \#_\varpi ) }( X_t^{ 0, x } )
      -
      G_l^{ ( \#_\varpi ) }( X_t^{ 0, y } )
    \big\|_{
      \mathcal{L}^{ p ( \#_\varpi + 1 ) }(
        \P ; L^{ ( \#_\varpi ) }( H, V_{l,r_l} ) 
      )
    }.
\end{split}
\end{equation}
In addition, H\"{o}lder's inequality establishes that for all 
$ k \in \{ 1, 2, \dots, n \} $,
$ l \in \{ 0, 1 \} $,
$ p \in [ 2 , \infty ) $,  
$ \gamma \in [0,\infty) $, 
$
  \boldsymbol{\delta}=
  (\delta_1, \delta_2, \dots, \delta_k) \in \deltaset{k} 
$,
$ \varpi \in \Pi^*_k $,
$ j \in \{1,2,\ldots,\#_\varpi\} $, 
$
  \mathbf{x} = 
  ( x, u_1 , u_2, \dots, u_k ) 
$, 
$
  \mathbf{y} = 
  ( y, u_1 , u_2, \dots, u_k ) 
  \in \times^k_{i=0} H^{[i]}
$, 
$
  t \in (0,T]
$ 
it holds that 
\begin{equation}
\label{eq:decompose.perturb.bound}
\begin{split}
&
    \tfrac{
    1
    }{ \prod^k_{ i=1 } \|u_i\|_{ H_{-\delta_i} } } \,
    \big\|
    G^{ ( \#_\varpi ) }_l( X_t^{ 0, x } )
    \big(
      X_t^{
        \#_{I^\varpi_1},
        [ \mathbf{x} ]_1^\varpi
      }
      ,
      X_t^{
        \#_{I^\varpi_2},
        [ \mathbf{x} ]_2^\varpi
      }
      ,
      \ldots
      ,
      X_t^{
        \#_{I^\varpi_{j-1}},
        [ \mathbf{x} ]_{j-1}^\varpi
      }
      ,
      X_t^{
        \#_{I^\varpi_j},
        [ \mathbf{x} ]_j^\varpi
      }
      -
      X_t^{
        \#_{I^\varpi_j},
        [ \mathbf{y} ]_j^\varpi
      }
      ,
\\&\quad
      X_t^{
        \#_{I^\varpi_{j+1}},
        [ \mathbf{y} ]_{j+1}^\varpi
      }
      ,
      X_t^{ \#_{I^\varpi_{j+2}}, [ \mathbf{y} ]_{j+2}^\varpi}
      ,
      \dots
      ,
      X_t^{ 
        \#_{I^\varpi_{\#_\varpi}},
        [ \mathbf{y} ]_{ \#_\varpi }^\varpi
      }
    \big)
    \big\|_{\lpn{p}{\P}{V_{l,r_l}}}
\\&\leq
  |G_l|_{ \Cb{\#_\varpi}( H, V_{l,r_l} ) }
  \Bigg[
    \prod^{j-1}_{ i=1 }
    \frac{
      \|X^{\#_{I^\varpi_i},[\mathbf{x}]^\varpi_i}_t\|_{ \lpn{p \#_\varpi}{\P}{H} }
    }{
    \prod^{ \#_{I^\varpi_i} }_{ m=1 } \|u_{I^\varpi_{i,m}}\|_{ H_{-\delta_{I^\varpi_{i,m}}} }
    }
  \Bigg]
  \Bigg[
    \prod^{ \#_\varpi }_{i=j+1}
    \frac{
      \|X^{\#_{I^\varpi_i},[\mathbf{y}]^\varpi_i}_t\|_{ \lpn{p \#_\varpi}{\P}{H} }
    }{
    \prod^{ \#_{I^\varpi_i} }_{ m=1 } \|u_{I^\varpi_{i,m}}\|_{ H_{-\delta_{I^\varpi_{i,m}}} }
    }
  \Bigg]
\\&\quad\cdot
    \frac{
      \|X^{\#_{I^\varpi_j},[\mathbf{x}]^\varpi_j}_t-X^{\#_{I^\varpi_j},[\mathbf{y}]^\varpi_j}_t\|_{ \lpn{p \#_\varpi}{\P}{H} }
    }{
    \prod^{ \#_{I^\varpi_j} }_{ m=1 } \|u_{I^\varpi_{j,m}}\|_{ H_{-\delta_{I^\varpi_{j,m}}} }
    }
\\&=
  |G_l|_{ \Cb{\#_\varpi}( H, V_{l,r_l} ) }
  \Bigg[
    \frac{1}{
      t^{ \gamma + \iota^{ (\boldsymbol{\delta},0) }_{ I^\varpi_j \cup \{k+1\} } }
    }
    \prod_{ I \in \varpi \setminus \{I^\varpi_j\} }
    \frac{1}{
    t^{ \iota^{ \boldsymbol{\delta} }_I }
    }
  \Bigg]
  \Bigg[
    \prod^{j-1}_{ i=1 }
    \frac{
      t^{ \iota^{ \boldsymbol{\delta} }_{ I^\varpi_i } }
      \|X^{\#_{I^\varpi_i},[\mathbf{x}]^\varpi_i}_t\|_{ \lpn{p \#_\varpi}{\P}{H} }
    }{
    \prod^{ \#_{I^\varpi_i} }_{ m=1 } \|u_{I^\varpi_{i,m}}\|_{ H_{-\delta_{I^\varpi_{i,m}}} }
    }
  \Bigg]
\\&\quad\cdot
  \Bigg[
    \prod^{ \#_\varpi }_{i=j+1}
    \frac{
      t^{ \iota^{ \boldsymbol{\delta} }_{ I^\varpi_i } }
      \|X^{\#_{I^\varpi_i},[\mathbf{y}]^\varpi_i}_t\|_{ \lpn{p \#_\varpi}{\P}{H} }
    }{
    \prod^{ \#_{I^\varpi_i} }_{ m=1 } \|u_{I^\varpi_{i,m}}\|_{ H_{-\delta_{I^\varpi_{i,m}}} }
    }
  \Bigg]
    \frac{
      t^{ \gamma + \iota^{ (\boldsymbol{\delta},0) }_{ I^\varpi_j \cup \{k+1\} } }
      \|X^{\#_{I^\varpi_j},[\mathbf{x}]^\varpi_j}_t-X^{\#_{I^\varpi_j},[\mathbf{y}]^\varpi_j}_t\|_{ \lpn{p \#_\varpi}{\P}{H} }
    }{
    \prod^{ \#_{I^\varpi_j} }_{ m=1 } \|u_{I^\varpi_{j,m}}\|_{ H_{-\delta_{I^\varpi_{j,m}}} }
    }
\\&\leq
  \frac{
    | T \vee 1 |^{ \lceil k/2 \rceil \min\{ 1-\alpha, \nicefrac{1}{2} - \beta 
    \} }
  }{
    t^{
      ( \gamma + \delta_1 + \delta_2 + \ldots + \delta_k )
    }
  }
    \sup_{\mathbf{v}=(v_i)_{i\in I^\varpi_j}\in (\nzspace{H})^{\#_{I^\varpi_j}} }\,
      \sup_{ s \in (0,T] }
      \Bigg\{
  |G_l|_{ \Cb{\#_\varpi}( H, V_{l,r_l} ) } \,
    L^{ \boldsymbol{\delta} }_{ \varpi \setminus \{I^\varpi_j\}, \, p\#_\varpi }
\\&\quad\cdot
      \frac{
        s^{
          \gamma + \iota_{ I^\varpi_j \cup \{k+1\} }^{ (\boldsymbol{\delta}, 0) } 
        }
        \|
          X_s^{ \#_{I^\varpi_j}, ( x, \mathbf{v} ) }
          -
          X_s^{ \#_{I^\varpi_j}, ( y, \mathbf{v} ) }
        \|_{
          \mathcal{L}^{ p \#_\varpi }( \P ; H )
        }
      }{
        \prod_{ i \in I^\varpi_j }
        \| v_i \|_{ H_{ - \delta_i } }
      }
    \Bigg\}
    .
\end{split}
\end{equation}
Combining~\eqref{eq:decompose.perturb}--\eqref{eq:decompose.perturb.bound} yields that for all 
$ k \in \{ 1, 2, \dots, n \} $,
$ l \in \{ 0, 1 \} $,
$ p \in [ 2 , \infty ) $,  
$ \gamma \in [0,\infty) $, 
$
  \boldsymbol{\delta}=
  (\delta_1, \delta_2, \dots, \delta_k) \in \deltaset{k} 
$,
$
  \mathbf{x} = 
  ( x, u_1 , u_2, \dots, u_k ) 
$, 
$
  \mathbf{y} = 
  ( y, u_1 , u_2, \dots, u_k ) 
  \in \times^k_{i=0} H^{[i]}
$, 
$
  t \in (0,T]
$
it holds that
\begin{equation}
\begin{split}
& 
  \frac{
    \|
      \mathbf{G}_{ k, l }^{ \mathbf{x} }( t , X_t^{ k, \mathbf{x} } )
      -
      \mathbf{G}_{ k, l }^{ \mathbf{y} }( t , X_t^{ k, \mathbf{x} } )
    \|_{
      \mathcal{L}^p( \P; V_{l,r_l} )
    }
  }{
    \prod_{ i = 1 }^k \| u_i \|_{ H_{ - \delta_i } }
  }
  \leq
  \frac{
    | T \vee 1 |^{ \lceil k/2 \rceil \min\{ 1-\alpha, \nicefrac{1}{2} - \beta 
    \} }
  }{
    t^{
      ( \delta_1 + \delta_2 + \ldots + \delta_k )
    }
  }
\\&\quad\cdot  
  \Bigg(
  \sum_{
    \varpi \in \Pi_k
  }
  \Big[
    L^{ \boldsymbol{\delta} }_{\varpi, \, p( \#_\varpi + 1 )} \,
    \big\|
      G_l^{ ( \#_\varpi ) }( X_t^{ 0, x } )
      -
      G_l^{ ( \#_\varpi ) }( X_t^{ 0, y } )
    \big\|_{
      \mathcal{L}^{ p ( \#_\varpi + 1 ) }(
        \P ; L^{ ( \#_\varpi ) }( H, V_{l,r_l} ) 
      )
    }
    \Big]
\\&\quad+
    \frac{1}{t^\gamma}
  \sum_{
    \varpi \in \Pi^*_k
  }
    \sum\limits_{
      I \in \varpi
    }
    \sup_{\mathbf{v}=(v_i)_{i\in I}\in (\nzspace{H})^{\#_I} }\,
      \sup_{ s \in (0,T] }
      \Bigg\{
    |
      G_l
    |_{
      \Cb{ \#_\varpi }( H, V_{l,r_l} ) 
      } \,
    L^{ \boldsymbol{\delta} }_{ \varpi \setminus \{I\}, \, p\#_\varpi } \,
\\&\quad\cdot
      \frac{
        s^{
          \gamma + \iota_{ I \cup \{k+1\} }^{ (\boldsymbol{\delta}, 0) } 
        }
        \|
          X_s^{ \#_I, ( x, \mathbf{v} ) }
          -
          X_s^{ \#_I, ( y, \mathbf{v} ) }
        \|_{
          \mathcal{L}^{ p \#_\varpi }( \P ; H )
        }
      }{
        \prod_{ i \in I }
        \| v_i \|_{ H_{ - \delta_i } }
      }
    \Bigg\}\Bigg)
  .
\end{split}
\end{equation}
This and Minkowski's inequality imply that for all 
$ k \in \{ 1, 2, \dots, n \} $,
$ l \in \{ 0, 1 \} $,
$ p \in [ 2 , \infty ) $,  
$
  \boldsymbol{\delta}=
  (\delta_1, \delta_2, \dots, \delta_k) \in \deltaset{k} 
$,
$ \gamma \in [0,\nicefrac{1}{2}-\sum^k_{i=1} \delta_i) $, 
$
  \mathbf{x} = 
  ( x, u_1 , u_2, \dots, u_k ) 
$, 
$
  \mathbf{y} = 
  ( y, u_1 , u_2, \dots, u_k ) 
  \in \times^k_{i=0} H^{[i]}
$, 
$ t \in (0,T] $
it holds that
\begin{equation}
\begin{split}
&
  \Bigg[
  \int^t_0
  \Bigg(
  \frac{
    \|
      \mathbf{G}_{ k, l }^{ \mathbf{x} }( s , X_s^{ k, \mathbf{x} } )
      -
      \mathbf{G}_{ k, l }^{ \mathbf{y} }( s , X_s^{ k, \mathbf{x} } )
    \|_{
      \mathcal{L}^p( \P; V_{l,r_l} )
    }
  }{
    (t-s)^{r_l}
    \prod^k_{i=1}
    \|u_i\|_{H_{-\delta_i}}
  }
  \Bigg)^{ \!(l+1) }
  \, ds
  \Bigg]^{\nicefrac{1}{(l+1)}}
  \leq
  | T \vee 1 |^{ \lceil k/2 \rceil \min\{ 1-\alpha, \nicefrac{1}{2} - \beta 
      \} }
\\&\cdot
  \Bigg[
  \int^t_0
  \Bigg(
    \sum_{ \varpi \in \Pi_k }
    \frac{
    L^{ \boldsymbol{\delta} }_{\varpi, \, p( \#_\varpi + 1 )} \,
    \|
      G_l^{ ( \#_\varpi ) }( X_s^{ 0, x } )
      -
      G_l^{ ( \#_\varpi ) }( X_s^{ 0, y } )
    \|_{
      \mathcal{L}^{ p ( \#_\varpi + 1 ) }(
        \P ; L^{ ( \#_\varpi ) }( H, V_{l,r_l} ) 
      )
    }
    }{
      s^{(\delta_1+\delta_2+\ldots+\delta_k)} \,
      (t-s)^{r_l}
    }
\\&+
    \sum_{ \varpi \in \Pi^*_k }
    \sum_{
      I \in \varpi
    }
    \frac{
    1
    }{
      s^{(\gamma+\delta_1+\delta_2+\ldots+\delta_k)} \,
      (t-s)^{r_l}
    }
    \sup_{\mathbf{v}=(v_i)_{i\in I}\in (\nzspace{H})^{\#_I} }\,
      \sup_{ w \in (0,T] }
      \Bigg\{
    |G_l|_{ \Cb{\#_\varpi}( H, V_{l,r_l} ) } \,
    L^{ \boldsymbol{\delta} }_{ \varpi \setminus \{I\}, \, p\#_\varpi } \,
\\&\cdot
      \frac{
        w^{
          \gamma + \iota_{ I \cup \{k+1\} }^{ (\boldsymbol{\delta}, 0) } 
        }
        \|
          X_w^{ \#_I, ( x, \mathbf{v} ) }
          -
          X_w^{ \#_I, ( y, \mathbf{v} ) }
        \|_{
          \mathcal{L}^{ p \#_\varpi }( \P ; H )
        }
      }{
        \prod_{ i \in I }
        \| v_i \|_{ H_{ - \delta_i } }
      }
    \Bigg\}
  \Bigg)^{\!(l+1)}
  \, ds
  \Bigg]^{\nicefrac{1}{(l+1)}}
\\&\leq
  | T \vee 1 |^{ \lceil k/2 \rceil \min\{ 1-\alpha, \nicefrac{1}{2} - \beta 
        \} }
\\&\cdot
  \Bigg(
    \sum_{ \varpi \in \Pi_k }
    L^{ \boldsymbol{\delta} }_{\varpi, \, p( \#_\varpi + 1 )}
  \Bigg[
  \int^t_0
  \Bigg(
    \frac{
    \|
      G_l^{ ( \#_\varpi ) }( X_s^{ 0, x } )
      -
      G_l^{ ( \#_\varpi ) }( X_s^{ 0, y } )
    \|_{
      \mathcal{L}^{ p ( \#_\varpi + 1 ) }(
        \P ; L^{ ( \#_\varpi ) }( H, V_{l,r_l} ) 
      )
    }
    }{
      s^{(\delta_1+\delta_2+\ldots+\delta_k)} \,
      (t-s)^{r_l}
    }
  \Bigg)^{\!(l+1)}
  \, ds
  \Bigg]^{\nicefrac{1}{(l+1)}}
\\&+
    \sum_{ \varpi \in \Pi^*_k }
    \sum_{ I \in \varpi }
    t^{(
      \nicefrac{1}{(l+1)} - r_l - \gamma - \sum^k_{i=1} \delta_i
    )}
    \Big[
    \mathbb{B}\big( 1-(l+1)r_l, 1-(l+1)(\gamma+\smallsum^k_{i=1} \delta_i) \big)
    \Big]^{\nicefrac{1}{(l+1)}}
\\&\cdot
    \sup_{\mathbf{v}=(v_i)_{i\in I}\in (\nzspace{H})^{\#_I} }\,
      \sup_{ w \in (0,T] }
      \Bigg\{
    |G_l|_{ \Cb{\#_\varpi}( H, V_{l,r_l} ) } \,
    L^{ \boldsymbol{\delta} }_{\varpi\setminus\{I\}, \, p \#_\varpi } \,
      \frac{
        w^{
          \gamma + \iota_{ I \cup \{k+1\} }^{ (\boldsymbol{\delta}, 0) } 
        }
        \|
          X_w^{ \#_I, ( x, \mathbf{v} ) }
          -
          X_w^{ \#_I, ( y, \mathbf{v} ) }
        \|_{
          \mathcal{L}^{ p \#_\varpi }( \P ; H )
        }
      }{
        \prod_{ i \in I }
        \| v_i \|_{ H_{ - \delta_i } }
      }
    \Bigg\}
  \Bigg)
  .
\end{split}
\end{equation}
Hence, we obtain that for all 
$ k \in \{ 1, 2, \dots, n \} $,
$ l \in \{ 0, 1 \} $,
$ p \in [ 2 , \infty ) $,  
$
  \boldsymbol{\delta}=
  (\delta_1, \delta_2, \dots, \delta_k) \in \deltaset{k} 
$,
$ \lambda \in [ \iota^{ ( \boldsymbol{\delta}, 0 ) }_\N, \nicefrac{1}{2} ) $, 
$
\gamma \in
[0,
\lambda - \iota^{ (\boldsymbol{\delta},0) }_\N]
\cap
[0,
\nicefrac{1}{2}
-
\sum^k_{ i=1 } \delta_i  
)
$, 
$
  \mathbf{x} = 
  ( x, u_1 , u_2, \dots, u_k ) 
$, 
$
  \mathbf{y} = 
  ( y, u_1 , u_2, \dots, u_k ) 
  \in \times^k_{i=0} H^{[i]}
$
it holds that
\begin{equation}
\begin{split}
&
  \sup_{ t \in (0,T] }
  \Bigg\{
  t^\lambda \, \chi^{r_l,T}_{A,\eta} \,
  \Big[
    \tfrac{p \, (p-1)}{2}
  \Big]^{\nicefrac{l}{2}} \,
  \Bigg[
  \int^t_0
  \Bigg(
  \frac{
    \|
      \mathbf{G}_{ k, l }^{ \mathbf{x} }( s , X_s^{ k, \mathbf{x} } )
      -
      \mathbf{G}_{ k, l }^{ \mathbf{y} }( s , X_s^{ k, \mathbf{x} } )
    \|_{
      \mathcal{L}^p( \P; V_{l,r_l} )
    }
  }{
    (t-s)^{r_l}
    \prod^k_{i=1}
    \|u_i\|_{H_{-\delta_i}}
  }
  \Bigg)^{ \!(l+1) }
  \, ds
  \Bigg]^{\nicefrac{1}{(l+1)}}
  \Bigg\}
\\&\leq
  | T \vee 1 |^{ \lceil k/2 \rceil \min\{ 1-\alpha, \nicefrac{1}{2} - \beta 
        \} }
  \Bigg(
    \sum_{ \varpi \in \Pi_k }
    L^{ \boldsymbol{\delta} }_{\varpi, \, p( \#_\varpi + 1 )} \,
    \chi^{r_l,T}_{A,\eta} 
\\&\cdot
  \sup_{ t \in (0,T] }
  \Bigg\{
  t^\lambda \,
  \Big[
    \tfrac{p \, (p-1)}{2}
  \Big]^{\nicefrac{l}{2}} \,
  \Bigg[
  \int^t_0
    \frac{
    \|
      G_l^{ ( \#_\varpi ) }( X_s^{ 0, x } )
      -
      G_l^{ ( \#_\varpi ) }( X_s^{ 0, y } )
    \|^{(l+1)}_{
      \mathcal{L}^{ p ( \#_\varpi + 1 ) }(
        \P ; L^{ ( \#_\varpi ) }( H, V_{l,r_l} ) 
      )
    }
    }{
      s^{(l+1)(\delta_1+\delta_2+\ldots+\delta_k)} \,
      (t-s)^{(l+1)r_l}
    }
  \, ds
  \Bigg]^{\nicefrac{1}{(l+1)}}
  \Bigg\}
\\&+
    \sum_{ \varpi \in \Pi^*_k }
    \sum_{ I \in \varpi }
    \chi^{r_l,T}_{A,\eta} \,
    T^{(
      \lambda+\nicefrac{1}{(l+1)} - r_l - \gamma - \sum^k_{i=1} \delta_i
    )} \,
  \Big[
    \tfrac{p \, (p-1)}{2}
  \Big]^{\nicefrac{l}{2}} \,
    \Big[
    \mathbb{B}\big( 1-(l+1)r_l, 1-(l+1)(\gamma+\smallsum^k_{i=1} \delta_i) \big)
    \Big]^{\nicefrac{1}{(l+1)}}
\\&\cdot
    \sup_{\mathbf{v}=(v_i)_{i\in I}\in (\nzspace{H})^{\#_I} }\,
      \sup_{ w \in (0,T] }
      \Bigg\{
    |G_l|_{ \Cb{\#_\varpi}( H, V_{l,r_l} ) } \,
    L^{ \boldsymbol{\delta} }_{\varpi\setminus\{I\}, \, p \#_\varpi } \,
      \frac{
        w^{
          \gamma + \iota_{ I \cup \{k+1\} }^{ (\boldsymbol{\delta}, 0) } 
        }
        \|
          X_w^{ \#_I, ( x, \mathbf{v} ) }
          -
          X_w^{ \#_I, ( y, \mathbf{v} ) }
        \|_{
          \mathcal{L}^{ p \#_\varpi }( \P ; H )
        }
      }{
        \prod_{ i \in I }
        \| v_i \|_{ H_{ - \delta_i } }
      }
    \Bigg\}
  \Bigg)
  .
\end{split}
\end{equation}
This shows that for all 
$ k \in \{ 1, 2, \dots, n \} $,
$ p \in [ 2 , \infty ) $,  
$
  \boldsymbol{\delta}=
  (\delta_1, \delta_2, \dots, \delta_k) \in \deltaset{k} 
$,
$ \lambda \in [ \iota^{ ( \boldsymbol{\delta}, 0 ) }_\N, \nicefrac{1}{2} ) $, 
$
\gamma \in
[0,
\lambda - \iota^{ (\boldsymbol{\delta},0) }_\N]
\cap
[0,
\nicefrac{1}{2}
-
\sum^k_{ i=1 } \delta_i  
)
$, 
$
  x, y \in H
$
it holds that
\begin{equation}
\label{eq:nonlinearity.perturb}
\begin{split}
&
  \sum^1_{l=0}
  \sup_{ \mathbf{u}=(u_1,u_2,\ldots,u_k) \in (\nzspace{H})^k }
  \sup_{ t\in(0,T] }
  \Bigg\{
  t^\lambda \, \chi^{r_l,T}_{A,\eta} \,
  \Big[
    \tfrac{p \, (p-1)}{2}
  \Big]^{\nicefrac{l}{2}}
\\&\cdot
  \Bigg[
  \int^t_0
  \Bigg(
  \frac{
    \|
      \mathbf{G}_{ k, l }^{ (x,\mathbf{u}) }( s , X_s^{ k, (x,\mathbf{u}) } )
      -
      \mathbf{G}_{ k, l }^{ (y,\mathbf{u}) }( s , X_s^{ k, (x,\mathbf{u}) } )
    \|_{
      \mathcal{L}^p( \P; V_{l,r_l} )
    }
  }{
    (t-s)^{r_l}
    \prod^k_{i=1}
    \|u_i\|_{H_{-\delta_i}}
  }
  \Bigg)^{ \!(l+1) }
  \, ds
  \Bigg]^{\nicefrac{1}{(l+1)}}
  \Bigg\}
\\&\leq
  | T \vee 1 |^{ \lceil k/2 \rceil \min\{ 1-\alpha, 1/2 - \beta 
  \} }
\\&\cdot
    \Bigg(
  \sum_{
    \varpi \in \Pi_k
  }
    L^{ \boldsymbol{\delta} }_{\varpi, p( \#_\varpi + 1 )}
    \Bigg[
    \chi^{ \alpha, T }_{ A, \eta }
    \sup_{ t\in( 0,T ] }
    \Bigg\{
    t^\lambda  
    \int_0^t
      \tfrac{
    \|
      F^{ ( \#_\varpi ) }( X_s^{ 0, x } )
      -
      F^{ ( \#_\varpi ) }( X_s^{ 0, y } )
    \|_{
      \mathcal{L}^{ p ( \#_\varpi + 1 ) }(
        \P ; L^{ ( \#_\varpi ) }( H, H_{ -\alpha } ) 
      )
    }
      }{
      ( t - s )^\alpha\,
      s^{ ( \delta_1 + \delta_2 + \ldots + \delta_k ) }
      }
    \, \diffns s
    \Bigg\}
\\ &
    +
    \chi^{ \beta, T }_{ A, \eta }
    \sup_{
      t \in (0,T]
    }
    \left\{
    t^\lambda
    \bigg[
      \tfrac{ p \, ( p - 1 ) }{ 2 }  
      \int_0^t
      \tfrac{
    \|
      B^{ ( \#_\varpi ) }( X_s^{ 0, x } )
      -
      B^{ ( \#_\varpi ) }( X_s^{ 0, y } )
    \|^2_{
      \mathcal{L}^{ p ( \#_\varpi + 1 ) }(
        \P ; L^{ ( \#_\varpi ) }( H, HS( U, H_{ -\beta } ) ) 
      )
    }
      }{
      ( t - s )^{ 2 \beta }\,
      s^{ 
        2( \delta_1 + \delta_2 + \ldots + \delta_k ) 
      }
      }
      \,\diffns s
    \bigg]^{ \nicefrac{1}{2} }
      \right\}\Bigg]
\\&+
    \sum_{ \varpi \in \Pi^*_k }
          \sum\limits_{
            I \in \varpi
          }
    \sup_{\mathbf{u}=(u_i)_{i\in I}\in (\nzspace{H})^{\#_I}}\,
            \sup_{ t \in (0,T] }
            \bigg\{
          L^{ \boldsymbol{\delta} }_{ \varpi \setminus \{I\}, p \, \#_\varpi }
            \tfrac{
              t^{
                \gamma + \iota_{ I \cup \{k+1\} }^{ (\boldsymbol{\delta}, 0) } 
              }
              \|
                X_t^{ \#_I, ( x, \mathbf{u} ) }
                -
                X_t^{ \#_I, ( y, \mathbf{u} ) }
              \|_{
                \mathcal{L}^{ p \#_\varpi }( \P ; H )
              }
            }{
              \prod_{ i \in I }
              \| u_i \|_{ H_{ - \delta_i } }
            }
\\&\cdot
      \bigg[
        \chi^{ \alpha, T }_{ A, \eta }\,
        T^{(
        \lambda + 1 - \alpha - \gamma - \sum^k_{ i=1 } \delta_i
        )}\,
        |F|_{  \Cb{ \#_\varpi }( H, H_{ -\alpha } )  }\,
        \mathbb{B}\big( 1-\alpha, 1 - \gamma - \smallsum^k_{ i=1 } \delta_i \big)
\\&+
        \chi^{ \beta, T }_{ A, \eta } \,
        T^{(
        \lambda + \nicefrac{1}{2} - \beta - \gamma - \sum^k_{ i=1 } \delta_i
        )} \,
        |B|_{  \Cb{ \#_\varpi }( H, HS( U, H_{ -\beta } ) )  }\,
        \sqrt{
        \tfrac{
          p \, (p-1)
        }{ 2 }\,
          \mathbb{B}\big( 1 - 2\beta
          ,
          1 - 2\gamma
          -
          2 \smallsum^k_{ i=1 } \delta_i  
          \big)
        }
      \bigg]
                  \bigg\}
      \Bigg).
\end{split}
\end{equation}
Combining~\eqref{eq:initial.perturb.gronwall} with~\eqref{eq:nonlinearity.perturb} yields that for all  
$ k \in \{ 1, 2, \dots, n \} $,
$ p \in [2,\infty) $, 
$
  \boldsymbol{\delta}=
  (\delta_1, \delta_2, \dots, \delta_k) \in \deltaset{k} 
$,
$ \lambda \in [ \iota^{ ( \boldsymbol{\delta}, 0 ) }_\N, \nicefrac{1}{2} ) $, 
$
\gamma \in
[0,
\lambda - \iota^{ (\boldsymbol{\delta},0) }_\N]
\cap
[0,
\nicefrac{1}{2}
-
\sum^k_{ i=1 } \delta_i  
)
$, 
$
  x,y
  \in H
$ 
it holds that 
\begin{equation}
\label{eq:derivative.difference}
\begin{split}
& 
  \sup_{\mathbf{u}=(u_1,u_2,\dots,u_k)\in (\nzspace{H})^k }
  \sup_{ t \in (0,T] }
  \frac{
  t^\lambda
    \,
    \|
      X_t^{ k, ( x, \mathbf{u} ) }
      -
      X_t^{ k, ( y, \mathbf{u} ) }
    \|_{
      \mathcal{L}^p( \P; H )
    }
  }{
    \prod^k_{ i=1 }
    \|u_i\|_{ H_{ -\delta_i } }
  }
\\&\leq
  | T \vee 1 |^{ \lceil k/2 \rceil \min\{ 1-\alpha, 1/2 - \beta 
  \} }
  \,
  \Theta_{ A, \eta, p, T }^{ 
    \alpha, \beta, \lambda
  }\big(
    | F |_{ \Cb{1}( H , H_{ - \alpha } ) }
    ,
    | B |_{ \Cb{1}( H , HS( U , H_{ - \beta } ) ) }
  \big)
\\&\cdot
    \Bigg(
  \sum_{
    \varpi \in \Pi_k
  }
    L^{ \boldsymbol{\delta} }_{\varpi, p( \#_\varpi + 1 )}
    \Bigg[
    \chi^{ \alpha, T }_{ A, \eta }
    \sup_{ t\in( 0,T ] }
    \Bigg\{
    t^\lambda  
    \int_0^t
      \tfrac{
    \|
      F^{ ( \#_\varpi ) }( X_s^{ 0, x } )
      -
      F^{ ( \#_\varpi ) }( X_s^{ 0, y } )
    \|_{
      \mathcal{L}^{ p ( \#_\varpi + 1 ) }(
        \P ; L^{ ( \#_\varpi ) }( H, H_{ -\alpha } ) 
      )
    }
      }{
      ( t - s )^\alpha\,
      s^{ ( \delta_1 + \delta_2 + \ldots + \delta_k ) }
      }
    \, \diffns s
    \Bigg\}
\\ &
    +
    \chi^{ \beta, T }_{ A, \eta }
    \sup_{
      t \in (0,T]
    }
    \left\{
    t^\lambda
    \bigg[
      \tfrac{ p \, ( p - 1 ) }{ 2 }  
      \int_0^t
      \tfrac{
    \|
      B^{ ( \#_\varpi ) }( X_s^{ 0, x } )
      -
      B^{ ( \#_\varpi ) }( X_s^{ 0, y } )
    \|^2_{
      \mathcal{L}^{ p ( \#_\varpi + 1 ) }(
        \P ; L^{ ( \#_\varpi ) }( H, HS( U, H_{ -\beta } ) ) 
      )
    }
      }{
      ( t - s )^{ 2 \beta }\,
      s^{ 
        2( \delta_1 + \delta_2 + \ldots + \delta_k ) 
      }
      }
      \,\diffns s
    \bigg]^{ \nicefrac{1}{2} }
      \right\}\Bigg]
\\&+
    \sum_{ \varpi \in \Pi^*_k }
          \sum\limits_{
            I \in \varpi
          }
    \sup_{\mathbf{u}=(u_i)_{i\in I}\in (\nzspace{H})^{\#_I}}\,
            \sup_{ t \in (0,T] }
            \bigg\{
          L^{ \boldsymbol{\delta} }_{ \varpi \setminus \{I\}, p \, \#_\varpi }
            \tfrac{
              t^{
                \gamma + \iota_{ I \cup \{k+1\} }^{ (\boldsymbol{\delta}, 0) } 
              }
              \|
                X_t^{ \#_I, ( x, \mathbf{u} ) }
                -
                X_t^{ \#_I, ( y, \mathbf{u} ) }
              \|_{
                \mathcal{L}^{ p \#_\varpi }( \P ; H )
              }
            }{
              \prod_{ i \in I }
              \| u_i \|_{ H_{ - \delta_i } }
            }
\\&\cdot
      \bigg[
        \chi^{ \alpha, T }_{ A, \eta }\,
        T^{(
        \lambda + 1 - \alpha - \gamma - \sum^k_{ i=1 } \delta_i
        )}\,
        |F|_{  \Cb{ \#_\varpi }( H, H_{ -\alpha } )  }\,
        \mathbb{B}\big( 1-\alpha, 1 - \gamma - \smallsum^k_{ i=1 } \delta_i \big)
\\&+
        \chi^{ \beta, T }_{ A, \eta } \,
        T^{(
        \lambda + \nicefrac{1}{2} - \beta - \gamma - \sum^k_{ i=1 } \delta_i
        )} \,
        |B|_{  \Cb{ \#_\varpi }( H, HS( U, H_{ -\beta } ) )  }\,
        \sqrt{
        \tfrac{
          p \, (p-1)
        }{ 2 }\,
          \mathbb{B}\big( 1 - 2\beta
          ,
          1 - 2\gamma
          -
          2 \smallsum^k_{ i=1 } \delta_i  
          \big)
        }
      \bigg]
                  \bigg\}
      \Bigg).
\end{split}
\end{equation}
In particular, this shows that for all 
$ k \in \{ 1, 2, \dots, n \} $,
$ p \in [2,\infty) $, 
$
  \boldsymbol{\delta}=
  (\delta_1, \delta_2, \dots, \delta_k) \in \deltaset{k} 
$,
$
  x,y
  \in H
$ 
it holds that 
\begin{equation}
\label{eq:derivative.difference.Lip}
\begin{split}
& 
  \sup_{\mathbf{u}=(u_1,u_2,\dots,u_k)\in (\nzspace{H})^k }
  \sup_{ t \in (0,T] }
  \frac{
  t^{ \iota^{(\boldsymbol{\delta},0)}_\N }
    \,
    \|
      X_t^{ k, ( x, \mathbf{u} ) }
      -
      X_t^{ k, ( y, \mathbf{u} ) }
    \|_{
      \mathcal{L}^p( \P; H )
    }
  }{
    \prod^k_{ i=1 }
    \|u_i\|_{ H_{ -\delta_i } }
  }
\\&\leq
  | T \vee 1 |^{ \lceil k/2 \rceil \min\{ 1-\alpha, 1/2 - \beta 
  \} }
  \,
  \Theta_{ A, \eta, p, T }^{ 
    \alpha, \beta, \iota^{(\boldsymbol{\delta},0)}_\N
  }\!\big(
    | F |_{ \Cb{1}( H , H_{ - \alpha } ) }
    ,
    | B |_{ \Cb{1}( H , HS( U , H_{ - \beta } ) ) }
  \big)
\\&\cdot
    \Bigg(
  \sum_{
    \varpi \in \Pi_k
  }
    L^{ \boldsymbol{\delta} }_{\varpi, p( \#_\varpi + 1 )}
    \Bigg[
    \chi^{ \alpha, T }_{ A, \eta }
    \sup_{ t\in( 0,T ] }
    \Bigg\{
    t^{\iota^{(\boldsymbol{\delta},0)}_\N}
    \int_0^t
      \tfrac{
    \|
      F^{ ( \#_\varpi ) }( X_s^{ 0, x } )
      -
      F^{ ( \#_\varpi ) }( X_s^{ 0, y } )
    \|_{
      \mathcal{L}^{ p ( \#_\varpi + 1 ) }(
        \P ; L^{ ( \#_\varpi ) }( H, H_{ -\alpha } ) 
      )
    }
      }{
      ( t - s )^\alpha\,
      s^{ ( \delta_1 + \delta_2 + \ldots + \delta_k ) }
      }
    \, \diffns s
    \Bigg\}
\\ &
    +
    \chi^{ \beta, T }_{ A, \eta }
    \sup_{
      t \in (0,T]
    }
    \left\{
    t^{\iota^{(\boldsymbol{\delta},0)}_\N}
    \bigg[
      \tfrac{ p \, ( p - 1 ) }{ 2 }  
      \int_0^t
      \tfrac{
    \|
      B^{ ( \#_\varpi ) }( X_s^{ 0, x } )
      -
      B^{ ( \#_\varpi ) }( X_s^{ 0, y } )
    \|^2_{
      \mathcal{L}^{ p ( \#_\varpi + 1 ) }(
        \P ; L^{ ( \#_\varpi ) }( H, HS( U, H_{ -\beta } ) ) 
      )
    }
      }{
      ( t - s )^{ 2 \beta }\,
      s^{ 
        2( \delta_1 + \delta_2 + \ldots + \delta_k ) 
      }
      }
      \,\diffns s
    \bigg]^{ \nicefrac{1}{2} }
      \right\}\Bigg]
\\&+
    \sum_{ \varpi \in \Pi^*_k }
          \sum\limits_{
            I \in \varpi
          }
    \sup_{\mathbf{u}=(u_i)_{i\in I}\in (\nzspace{H})^{\#_I}}\,
            \sup_{ t \in (0,T] }
            \bigg\{
          L^{ \boldsymbol{\delta} }_{ \varpi \setminus \{I\}, p \, \#_\varpi }
            \tfrac{
              t^{
                \iota_{ I \cup \{k+1\} }^{ (\boldsymbol{\delta}, 0) } 
              }
              \|
                X_t^{ \#_I, ( x, \mathbf{u} ) }
                -
                X_t^{ \#_I, ( y, \mathbf{u} ) }
              \|_{
                \mathcal{L}^{ p \#_\varpi }( \P ; H )
              }
            }{
              \prod_{ i \in I }
              \| u_i \|_{ H_{ - \delta_i } }
            }
\\&\cdot
      \bigg[
        \chi^{ \alpha, T }_{ A, \eta }\,
        T^{(
        \iota^{(\boldsymbol{\delta},0)}_\N + 1 - \alpha - \sum^k_{ i=1 } \delta_i
        )}\,
        |F|_{  \Cb{ \#_\varpi }( H, H_{ -\alpha } )  }\,
        \mathbb{B}\big( 1-\alpha, 1 - \smallsum^k_{ i=1 } \delta_i \big)
\\&+
        \chi^{ \beta, T }_{ A, \eta } \,
        T^{(
        \iota^{(\boldsymbol{\delta},0)}_\N + \nicefrac{1}{2} - \beta - \sum^k_{ i=1 } \delta_i
        )} \,
        |B|_{  \Cb{ \#_\varpi }( H, HS( U, H_{ -\beta } ) )  }\,
        \sqrt{
        \tfrac{
          p \, (p-1)
        }{ 2 }\,
          \mathbb{B}\big( 1 - 2\beta
          ,
          1
          -
          2 \smallsum^k_{ i=1 } \delta_i  
          \big)
        }
      \bigg]
                  \bigg\}
      \Bigg).
\end{split}
\end{equation}
Furthermore, we note that Corollary~2.8 in~\cite{AnderssonJentzenKurniawan2016arXiv} 
(with 
$
H=H
$, 
$
U=U
$, 
$ T = T $, 
$ \eta = \eta $, 
$ p = p $, 
$ \alpha = \alpha $, 
$
\hat{\alpha} = 0
$,
$ \beta = \beta $, 
$
\hat{\beta} 
= 
0
$,
$L_0=|F|_{\Cb{1}(H,H_{-\alpha})}$,  
$ \hat{L}_0 = \|F(0)\|_{H_{-\alpha}} $, 
$ L_1 = |B|_{\Cb{1}(H,HS(U,H_{-\beta}))} $, 
$
\hat{L}_1
=\|B(0)\|_{HS(U,H_{-\beta})}
$, 
$
W=W
$, 
$ A=A $,
$\mathbf{F}=\big([0,T]\times\Omega\times H\ni(t,\omega,z)\mapsto F(z) \in H_{-\alpha}\big)$, 
$\mathbf{B}=\big([0,T]\times\Omega\times H\ni(t,\omega,z)\mapsto (U\ni u \mapsto B(z)u \in H_{-\beta}) \in HS(U,H_{-\beta})\big)$, 
$ \delta = 0 $, 
$ X^1 = X^{0,x} $, 
$ X^2 = X^{0,y} $, 
$ \lambda = 0 $ 
for 
$ x,y \in H $, 
$ p \in [2,\infty) $
in the notation of Corollary~2.8 in~\cite{AnderssonJentzenKurniawan2016arXiv}) 
and~\eqref{eq:proof.derivatives} show that for all 
$ p \in [2,\infty) $, 
$ x, y \in H $ 
it holds that 
\begin{equation}
\label{eq:initial.perturb}
  \sup_{ t \in (0,T] }
  \| X^{0,x}_t - X^{0,y}_t \|_{ \lpn{p}{\P}{H} }
  \leq
  \chi^{0,T}_{ A, \eta } \,
  \|x-y\|_H \,
  \Theta^{ \alpha,\beta,0 }_{ A, \eta, p, T }
  \big(
    | F |_{ \Cb{1}( H , H_{ - \alpha } ) }
    ,
    | B |_{ \Cb{1}( H , HS( U , H_{ - \beta } ) ) }
  \big)
  < \infty
  .
\end{equation}
This implies that for all 
$ k \in \{ 1, 2, \dots, n \} $, 
$ m \in \{ 1, 2, \ldots, k \} $,  
$ p \in [2,\infty) $,
$ l \in \{ 0, 1 \} $,  
$ 
  \boldsymbol{\delta}=
  (\delta_1, \delta_2, \dots, \delta_k) \in \deltaset{k} 
$,
$
  x, y \in H
$, 
$ t \in (0,T] $ 
with $ x \neq y $
it holds that 
\begin{equation}
\begin{split}
&
  t^{
    \iota^{ (\boldsymbol{\delta},0) }_\N
  }
  \left[
  \int^t_0
    \tfrac{
      \|
      G^{(m)}_l( X^{0,x}_s ) - G^{(m)}_l( X^{0,y}_s )
      \|^{ (l+1) }_{ \lpn{p}{\P}{ L^{ (m) }( H, V_{l,r_l} ) } }
    }{
      ( t-s )^{ (l+1)r_l }\,
      s^{ (l+1)( \delta_1 + \delta_2 + \ldots + \delta_k 
      ) }
    }
  \,\diffns{s}
  \right]^{ \nicefrac{1}{( l+1 )} }
\\&\leq
  t^{
  	\iota^{ (\boldsymbol{\delta},0) }_\N
  }
  \left[
  \int^t_0
  \tfrac{
  	1
  }{
  ( t-s )^{ (l+1)r_l }\,
  s^{ (l+1)( \delta_1+\delta_2 + \ldots + \delta_k ) }
}
\,\diffns{s}
\right]^{ \nicefrac{1}{( l+1 )} }
\\&\quad\cdot
  \sup_{ s\in(0,T] }
  \|
  G^{(m)}_l( X^{0,x}_s ) - G^{(m)}_l( X^{0,y}_s )
  \|_{ \lpn{p}{\P}{ L^{ (m) }( H, V_{l,r_l} ) } }
\\&\leq
  T^{
    (\nicefrac{1}{( l+1 )} - r_l - \min\{1-\alpha,\nicefrac{1}{2}-\beta\})
  }\,
  \big|\mathbb{B}\big(
    1-(l+1)r_l
    ,
    1-(l+1) \smallsum^k_{ i=1 } \delta_i
  \big)\big|^{ \nicefrac{1}{(l+1)} }
\\&\quad\cdot
	  \sup_{ s\in(0,T] }
	  \|
	  G^{(m)}_l( X^{0,x}_s ) - G^{(m)}_l( X^{0,y}_s )
	  \|_{ \lpn{p}{\P}{ L^{ (m) }( H, V_{l,r_l} ) } }
\\&\leq
  T^{
    (\nicefrac{1}{( l+1 )} - r_l - \min\{1-\alpha,\nicefrac{1}{2}-\beta\})
  }\,
  \big|\mathbb{B}\big(
    1-(l+1)r_l
    ,
    1-(l+1) \smallsum^k_{ i=1 } \delta_i
  \big)\big|^{ \nicefrac{1}{(l+1)} }
\\&\quad\cdot
  |G_l|_{\operatorname{Lip}^m(H,V_{l,r_l})}
	  \sup_{ s\in(0,T] }
	  \|
	  X^{0,x}_s - X^{0,y}_s
	  \|_{ \lpn{p}{\P}{ H } }
\\&\leq
  T^{
    (\nicefrac{1}{( l+1 )} - r_l - \min\{1-\alpha,\nicefrac{1}{2}-\beta\})
  }\,
  \Theta^{ \alpha,\beta,0 }_{ A, \eta, p, T }
  \big(
    | F |_{ \Cb{1}( H , H_{ - \alpha } ) }
    ,
    | B |_{ \Cb{1}( H , HS( U , H_{ - \beta } ) ) }
  \big)
\\&\quad\cdot
  \big|\mathbb{B}\big(
    1-(l+1)r_l
    ,
    1-(l+1) \smallsum^k_{ i=1 } \delta_i
  \big)\big|^{ \nicefrac{1}{(l+1)} }\,
  \chi^{ 0, T }_{ A, \eta }\,
  |G_l|_{ \operatorname{Lip}^m( H,V_{l,r_l} ) }\,
  \| x-y \|_H
  .
\end{split}
\end{equation}
Combining this with~\eqref{eq:derivative.difference.Lip} establishes that for all 
$ k \in \{1,2,\ldots,n\} $, 
$ p \in [2,\infty) $, 
$ \boldsymbol{\delta}=(\delta_1,\delta_2,\ldots,\delta_k) \in \deltaset{k} $
with 
$
  |F|_{\operatorname{Lip}^k(H,H_{-\alpha})}
  +
  |B|_{\operatorname{Lip}^k(H,HS(U,H_{-\beta}))}
  < \infty
$ 
it holds that 
\begin{equation}
\label{eq:lipschitz.arg.F}
\begin{split}
&
  \sup_{\substack{x,y\in H,\\ x\neq y}}
  \sup_{ \mathbf{u}=(u_1,u_2,\ldots,u_k) \in (\nzspace{H})^k }
  \sup_{ t \in (0,T] }
  \left[
  \frac{
  t^{ \iota^{(\boldsymbol{\delta},0)}_\N }
    \,
    \|
      X_t^{ k, ( x, \mathbf{u} ) }
      -
      X_t^{ k, ( y, \mathbf{u} ) }
    \|_{
      \mathcal{L}^p( \P; H )
    }
  }{
    \|x-y\|_H
    \prod^k_{ i=1 }
    \|u_i\|_{ H_{ -\delta_i } }
  }
  \right]
\\&\leq
  | T \vee 1 |^{ \lceil k/2 \rceil \min\{ 1-\alpha, 1/2 - \beta 
  \} }
  \,
  \Theta_{ A, \eta, p, T }^{ 
    \alpha, \beta, \iota^{(\boldsymbol{\delta},0)}_\N
  }\big(
    | F |_{ \Cb{1}( H , H_{ - \alpha } ) }
    ,
    | B |_{ \Cb{1}( H , HS( U , H_{ - \beta } ) ) }
  \big)
\\&\cdot
    \Bigg(
  \sum_{
    \varpi \in \Pi_k
  }
    L^{ \boldsymbol{\delta} }_{\varpi, p( \#_\varpi + 1 )} \,
  \chi^{0,T}_{A,\eta} \,
  \Theta^{ \alpha,\beta,0 }_{ A, \eta, p, T }
  \big(
    | F |_{ \Cb{1}( H , H_{ - \alpha } ) }
    ,
    | B |_{ \Cb{1}( H , HS( U , H_{ - \beta } ) ) }
  \big)
\\&\cdot
      \bigg[
        \chi^{ \alpha, T }_{ A, \eta }\,
        T^{(
        1 - \alpha - \min\{1-\alpha,\nicefrac{1}{2}-\beta\}
        )}\,
        |F|_{  \operatorname{Lip}^{ \#_\varpi }( H, H_{ -\alpha } )  }\,
        \mathbb{B}\big( 1-\alpha, 1 - \smallsum^k_{ i=1 } \delta_i \big)
\\&+
        \chi^{ \beta, T }_{ A, \eta } \,
        T^{(
        \nicefrac{1}{2} - \beta - \min\{1-\alpha,\nicefrac{1}{2}-\beta\}
        )} \,
        |B|_{  \operatorname{Lip}^{ \#_\varpi }( H, HS( U, H_{ -\beta } ) )  }\,
        \sqrt{
        \tfrac{
          p \, (p-1)
        }{ 2 }\,
          \mathbb{B}\big( 1 - 2\beta
          ,
          1
          -
          2 \smallsum^k_{ i=1 } \delta_i  
          \big)
        }
      \bigg]
\\&+
    \sum_{ \varpi \in \Pi^*_k }
          \sum\limits_{
            I \in \varpi
          }
  \sup_{\substack{x,y\in H,\\ x\neq y}}
    \sup_{\mathbf{u}=(u_i)_{i\in I}\in (\nzspace{H})^{\#_I}}\,
            \sup_{ t \in (0,T] }
            \bigg\{
          L^{ \boldsymbol{\delta} }_{ \varpi \setminus \{I\}, p \, \#_\varpi }
            \tfrac{
              t^{
                \iota_{ I \cup \{k+1\} }^{ (\boldsymbol{\delta}, 0) } 
              }
              \|
                X_t^{ \#_I, ( x, \mathbf{u} ) }
                -
                X_t^{ \#_I, ( y, \mathbf{u} ) }
              \|_{
                \mathcal{L}^{ p \#_\varpi }( \P ; H )
              }
            }{
              \|x-y\|_H
              \prod_{ i \in I }
              \| u_i \|_{ H_{ - \delta_i } }
            }
\\&\cdot
      \bigg[
        \chi^{ \alpha, T }_{ A, \eta }\,
        T^{(
        1 - \alpha - \min\{1-\alpha,\nicefrac{1}{2}-\beta\}
        )}\,
        |F|_{  \Cb{ \#_\varpi }( H, H_{ -\alpha } )  }\,
        \mathbb{B}\big( 1-\alpha, 1 - \smallsum^k_{ i=1 } \delta_i \big)
\\&+
        \chi^{ \beta, T }_{ A, \eta } \,
        T^{(
        \nicefrac{1}{2} - \beta - \min\{1-\alpha,\nicefrac{1}{2}-\beta\}
        )} \,
        |B|_{  \Cb{ \#_\varpi }( H, HS( U, H_{ -\beta } ) )  }\,
        \sqrt{
        \tfrac{
          p \, (p-1)
        }{ 2 }\,
          \mathbb{B}\big( 1 - 2\beta
          ,
          1
          -
          2 \smallsum^k_{ i=1 } \delta_i  
          \big)
        }
      \bigg]
                  \bigg\}
      \Bigg).
\end{split}
\end{equation}
Induction and~\eqref{eq:L.finite} hence imply that for all 
$ k \in \{1,2,\ldots,n\} $, 
$ p \in [2,\infty) $, 
$ \boldsymbol{\delta}=(\delta_1,\delta_2,\ldots,\delta_k) \in \deltaset{k} $
with 
$
  |F|_{\operatorname{Lip}^k(H,H_{-\alpha})}
  +
  |B|_{\operatorname{Lip}^k(H,HS(U,H_{-\beta}))}
  < \infty
$ 
it holds that 
\begin{equation}
\begin{split}
&
  \sup_{\substack{x,y\in H,\\ x\neq y}}
  \sup_{ \mathbf{u}=(u_1,u_2,\ldots,u_k) \in (\nzspace{H})^k }
  \sup_{ t \in (0,T] }
  \left[
  \frac{
  t^{ \iota^{(\boldsymbol{\delta},0)}_\N }
    \,
    \|
      X_t^{ k, ( x, \mathbf{u} ) }
      -
      X_t^{ k, ( y, \mathbf{u} ) }
    \|_{
      \mathcal{L}^p( \P; H )
    }
  }{
    \|x-y\|_H
    \prod^k_{ i=1 }
    \|u_i\|_{ H_{ -\delta_i } }
  }
  \right]
\\&\leq
  | T \vee 1 |^k
  \,
  \Theta_{ A, \eta, p, T }^{ 
    \alpha, \beta, \iota^{(\boldsymbol{\delta},0)}_\N
  }\big(
    | F |_{ \Cb{1}( H , H_{ - \alpha } ) }
    ,
    | B |_{ \Cb{1}( H , HS( U , H_{ - \beta } ) ) }
  \big)
\\&\cdot
    \Bigg(
  \sum_{
    \varpi \in \Pi_k
  }
    L^{ \boldsymbol{\delta} }_{\varpi, p( \#_\varpi + 1 )} \,
  \chi^{0,T}_{A,\eta} \,
  \Theta^{ \alpha,\beta,0 }_{ A, \eta, p, T }
  \big(
    | F |_{ \Cb{1}( H , H_{ - \alpha } ) }
    ,
    | B |_{ \Cb{1}( H , HS( U , H_{ - \beta } ) ) }
  \big)
\\&\cdot
      \bigg[
        \chi^{ \alpha, T }_{ A, \eta }\,
        |F|_{  \operatorname{Lip}^{ \#_\varpi }( H, H_{ -\alpha } )  }\,
        \mathbb{B}\big( 1-\alpha, 1 - \smallsum^k_{ i=1 } \delta_i \big)
\\&+
        \chi^{ \beta, T }_{ A, \eta } \,
        |B|_{  \operatorname{Lip}^{ \#_\varpi }( H, HS( U, H_{ -\beta } ) )  }\,
        \sqrt{
        \tfrac{
          p \, (p-1)
        }{ 2 }\,
          \mathbb{B}\big( 1 - 2\beta
          ,
          1
          -
          2 \smallsum^k_{ i=1 } \delta_i  
          \big)
        }
      \bigg]
\\&+
    \sum_{ \varpi \in \Pi^*_k }
          \sum\limits_{
            I \in \varpi
          }
          L^{ \boldsymbol{\delta} }_{ \varpi \setminus \{I\}, p \, \#_\varpi }
  \sup_{\substack{x,y\in H,\\ x\neq y}}
    \sup_{\mathbf{u}=(u_i)_{i\in I}\in (\nzspace{H})^{\#_I}}\,
            \sup_{ t \in (0,T] }
            \bigg[
            \tfrac{
              t^{
                \iota_{ I \cup \{k+1\} }^{ (\boldsymbol{\delta}, 0) } 
              }
              \|
                X_t^{ \#_I, ( x, \mathbf{u} ) }
                -
                X_t^{ \#_I, ( y, \mathbf{u} ) }
              \|_{
                \mathcal{L}^{ p \#_\varpi }( \P ; H )
              }
            }{
              \|x-y\|_H
              \prod_{ i \in I }
              \| u_i \|_{ H_{ - \delta_i } }
            }
                  \bigg]
\\&\cdot
      \bigg[
        \chi^{ \alpha, T }_{ A, \eta }\,
        |F|_{  \Cb{ \#_\varpi }( H, H_{ -\alpha } )  }\,
        \mathbb{B}\big( 1-\alpha, 1 - \smallsum^k_{ i=1 } \delta_i \big)
\\&+
        \chi^{ \beta, T }_{ A, \eta } \,
        |B|_{  \Cb{ \#_\varpi }( H, HS( U, H_{ -\beta } ) )  }\,
        \sqrt{
        \tfrac{
          p \, (p-1)
        }{ 2 }\,
          \mathbb{B}\big( 1 - 2\beta
          ,
          1
          -
          2 \smallsum^k_{ i=1 } \delta_i  
          \big)
        }
      \bigg]
      \Bigg)
      < \infty.
\end{split}
\end{equation}
This implies~\eqref{eq:a_priori_Lip} and thus completes the proof of item~\eqref{item:lem_derivative:a_priori_Lip}. 
To prove item~\eqref{item:thm_derivative.continuous} we first observe that~\eqref{eq:initial.perturb} ensures that for all 
$ x \in H $, 
$ t \in [0,T] $
it holds that 
\begin{equation}
  \limsup\nolimits_{ H \ni y \to x }
  \E\big[\!\min\{1,
    \|X^{0,x}_t-X^{0,y}_t\|_H
  \}\big]
  =0.
\end{equation}
This implies for all 
$ x \in H $, 
$ \rho \in [0,1] $, 
$ t \in [0,T] $
that 
\begin{equation}
  \limsup\nolimits_{ H \ni y \to x }
  \E\big[\!\min\{1,
    \|
    (X^{0,x}_t + \rho[ X^{0,y}_t - X^{0,x}_t ])
    -
    X^{0,x}_t
    \|_H
  \}\big]
  =0.
\end{equation}
The fact that 
$ 
  \forall\, 
  k \in \{1,2,\ldots,n\}, \,
  l \in \{ 0, 1 \} 
  \colon
  G^{(k)}_l \in 
  \mathcal{C}( H, L^{(k)}(H,V_{l,0}) )
$, 
e.g., Lemma~4.2 in 
Hutzenthaler et al.~\cite{HutzenthalerJentzenSalimova2016arXiv},
and, e.g., item~(ii) of Theorem~6.12 in Klenke~\cite{k08b}
hence ensure that for all 
$ k \in \{ 1,2,\ldots,n \} $,
$ l \in \{ 0, 1 \} $,  
$ x \in H $, 
$ \rho\in[0,1] $, 
$t\in[0,T]$, 
$ (x_m)_{m\in\N_0} \subseteq H $ 
with 
$ \limsup_{m\to\infty} \|x_m-x_0\|_H = 0 $
it holds that 
\begin{equation}
  \limsup\nolimits_{ m \to \infty }
  \E\big[\!\min\{
  1
  ,
  \|
  G^{(k)}_l( X^{0,x_0}_t + \rho[ X^{0,x_m}_t - X^{0,x_0}_t ] )
  -
  G^{(k)}_l( X^{0,x_0}_t )
  \|_{ L^{(k)}( H, V_{l,0} ) }
  \}\big]
  = 0
  .
\end{equation}
Combining this and, e.g., Lemma~4.2 in 
Hutzenthaler et al.~\cite{HutzenthalerJentzenSalimova2016arXiv} 
(with 
$ I = \{\emptyset\} $, 
$ c=1 $, 
$
  X^m(\emptyset,\omega) = 
  \|G^{(k)}_l( X^{0,x}_t(\omega) + \rho [ X^{0,x_m}_t(\omega) - X^{0,x}_t(\omega) ] ) - G^{(k)}_l( X^{0,x}_t(\omega) )\|_{L^{(k)}(H,V_{l,0})}
$
for 
$ \omega \in \Omega $, 
$ t \in [0,T] $, 
$ \rho \in [0,1] $, 
$ l \in \{0,1\} $, 
$ k \in \{1,2,\ldots,n\} $, 
$
  (x_j)_{ j \in \N }
  \in 
  \{
    y \in \mathbb{M}( \N, H )
    \colon
    \limsup_{ j \to \infty }
    \|y_j-x\|_H
    =0
  \}
$, 
$ m \in \N $, 
$ x \in H $ 
in the notation of Lemma~4.2 in 
Hutzenthaler et al.~\cite{HutzenthalerJentzenSalimova2016arXiv}) 
establishes that for all 
$ \varepsilon \in (0,\infty) $, 
$ k \in \{1,2,\ldots,n\} $, 
$ l \in \{0,1\} $, 
$ x \in H $, 
$ \rho \in [0,1] $, 
$ t \in [0,T] $, 
$ (x_m)_{ m \in \N } \subseteq H $ 
with 
$
    \limsup_{ m \to \infty }
    \|x_m-x\|_H
    =0
$ 
it holds that 
\begin{multline}
  \limsup\nolimits_{ m \to \infty } \,
  \P\big(\big\{
    \omega \in \Omega \colon
    \|G^{(k)}_l( X^{0,x}_t(\omega) + \rho [ X^{0,x_m}_t(\omega) 
    - X^{0,x}_t(\omega) ] ) 
\\
    - G^{(k)}_l( X^{0,x}_t(\omega) )\|_{L^{(k)}(H,V_{l,0})}
    \geq \varepsilon
  \big\}\big)
  =0.
\end{multline}
This, the fact that 
$
  \forall\, 
  k \in \{1,2,\ldots,n\}, \,
  l \in \{ 0,1 \}
  \colon
  \sup_{ x \in H }
  \|G^{(k)}_l(x)\|_{L^{(k)}(H,V_{l,0})}
  < \infty
$,
and, e.g., 
Proposition~4.5 in 
Hutzenthaler et al.~\cite{HutzenthalerJentzenSalimova2016arXiv} 
(with 
$ I = \{\emptyset\} $, 
$ p = p $, 
$ V = \R $, 
$
  X^m(\emptyset,\omega) = 
    \|G^{(k)}_l( X^{0,x_0}_t(\omega) + \rho[ X^{0,x_m}_t(\omega) - X^{0,x_0}_t(\omega) ] )
    -
    G^{(k)}_l( X^{0,x_0}_t(\omega) )\|_{L^{(k)}(H,V_{l,0})}
$
for 
$ \omega \in \Omega $, 
$ t \in [0,T] $, 
$ \rho \in [0,1] $, 
$ p \in (0,\infty) $, 
$ l \in \{0,1\} $, 
$ k \in \{1,2,\ldots,n\} $, 
$
  (x_j)_{ j \in \N_0 }
  \in 
  \{
    y \in \mathbb{M}( \N_0, H )
    \colon
    \limsup_{ j \to \infty }
    \|y_j-y_0\|_H
    =0
  \}
$, 
$ m \in \N_0 $ 
in the notation of Proposition~4.5 in 
Hutzenthaler et al.~\cite{HutzenthalerJentzenSalimova2016arXiv})
ensure that for all 
$ k \in \{ 1,2,\ldots,n \} $, 
$ l \in \{ 0, 1 \} $, 
$ p \in (0,\infty) $, 
$ \rho\in[0,1] $,
$t\in[0,T]$, 
$
  (x_m)_{ m \in \N_0 } \subseteq H
$ 
with 
$
    \limsup_{ m \to \infty }
    \|x_m-x_0\|_H
    =0
$
it holds that 
\begin{equation}
\label{eq:1st.derivative.conv1}
\begin{split}
  \limsup\nolimits_{ m \to \infty }
  \E\Big[
  \big\|
    G^{(k)}_l( X^{0,x_0}_t + \rho[ X^{0,x_m}_t - X^{0,x_0}_t ] )
    -
    G^{(k)}_l( X^{0,x_0}_t )
  \big\|^p_{L^{(k)}(H,V_{l,0})}
  \Big]
  = 0
  .
\end{split}
\end{equation}
Combining H\"{o}lder's inequality and  
Lebesgue's theorem of dominated convergence 
with~\eqref{eq:1st.derivative.conv1} 
(with $\rho=1$ in the notation of~\eqref{eq:1st.derivative.conv1})
yields that for all 
$ k \in \{ 1, 2, \dots, n \} $, 
$ l \in \{ 0,1 \} $, 
$ p \in [2,\infty) $, 
$ q \in ( 1,  \frac{1}{\max\{\alpha,2\beta,\nicefrac{1}{2}\}} ) $, 
$ \lambda \in [\max\{\alpha-\nicefrac{1}{q},\beta-\nicefrac{1}{(2q)}\},\infty) $, 
$ x \in H $
it holds that 
\begin{equation}
\label{eq:UI.cont}
\begin{split}
&
  \limsup_{ H\ni y \rightarrow x }
  \sup_{ t\in(0,T] }
  \bigg\{
      t^\lambda
      \bigg[
      \int_0^t
      \frac{
    \|
      G_l^{ ( k ) }( X_s^{ 0, x } )
      -
      G_l^{ ( k ) }( X_s^{ 0, y } )
    \|^{(l+1)}_{
      \mathcal{L}^p(
        \P ; L^{ ( k ) }( H, V_{l,r_l} ) 
      )
    }
      }{
      ( t - s )^{ (l+1) r_l }\,
      }
      \,\diffns s
      \bigg]^{\nicefrac{1}{(l+1)}}
      \bigg\}
\\&\leq
  \limsup_{ H\ni y \rightarrow x }
  \sup_{ t\in(0,T] }
      \bigg\{
      t^\lambda
      \left[\int_0^t
      \frac{
        1
      }{
      ( t - s )^{ q (l+1) r_l }\,
      }
      \,\diffns s  \right]^{\nicefrac{1}{[q(l+1)]}} 
\\&\quad\cdot
     \bigg[ \int_0^t
    \|
    G_l^{ ( k ) }( X_s^{ 0, x } )
    -
    G_l^{ ( k ) }( X_s^{ 0, y } )
    \|^{q(l+1)/(q-1)}_{
    	\mathcal{L}^p(
    	\P ; L^{ ( k ) }( H, V_{l,r_l} ) 
    	)
    }
      \,\diffns s   \bigg]^{\nicefrac{(q-1)}{[q(l+1)]}}
      \bigg\}
\\&=
  \limsup_{ H\ni y \rightarrow x }
  \sup_{ t\in(0,T] }
      \bigg\{
      \frac{t^{(\lambda+\nicefrac{1}{[q(l+1)]}-r_l)}}{[1-q(l+1)r_l]^{\nicefrac{1}{[q(l+1)]}}}
\\&\quad\cdot
     \bigg[ \int_0^t
    \|
      G_l^{ ( k ) }( X_s^{ 0, x } )
      -
      G_l^{ ( k ) }( X_s^{ 0, y } )
    \|^{q(l+1)/(q-1)}_{
      \mathcal{L}^p(
        \P ; L^{ ( k ) }( H, V_{l,r_l} ) 
      )
    }
      \,\diffns s   \bigg]^{\nicefrac{(q-1)}{[q(l+1)]}} 
      \bigg\}
\\&=
      \frac{T^{(\lambda+\nicefrac{1}{[q(l+1)]}-r_l)}}{[1-q(l+1)r_l]^{\nicefrac{1}{[q(l+1)]}}}
\\&\quad\cdot
  \bigg[
  \limsup_{ H\ni y \rightarrow x }
    \int_0^T
    \|
      G_l^{ ( k ) }( X_s^{ 0, x } )
      -
      G_l^{ ( k ) }( X_s^{ 0, y } )
    \|^{q(l+1)/(q-1)}_{
      \mathcal{L}^p(
        \P ; L^{ ( k ) }( H, V_{l,r_l} ) 
      )
    }
      \,\diffns s
  \bigg]^{\nicefrac{(q-1)}{[q(l+1)]}}
= 0
  .
\end{split}
\end{equation}
Moreover, observe that
the fact that 
$
\forall \, q \in ( 1,  \frac{1}{\max\{\alpha,2\beta,\nicefrac{1}{2}\}} )
\colon
0<\min\{\nicefrac{1}{q}-\alpha,\nicefrac{1}{(2q)}-\beta\}
<
\min\{1-\alpha,\nicefrac{1}{2}-\beta\}\leq \nicefrac{1}{2}
$
and~\eqref{eq:derivative.difference} 
(with 
$ k = k $, 
$ p = p $, 
$ \boldsymbol{\delta} = \mathbf{0}_k $, 
$ 
  \lambda = -\min\{\nicefrac{1}{q}-\alpha,\nicefrac{1}{(2q)}-\beta\}
$,  
$ 
  \gamma 
  = 
  \min\{1-\alpha,\nicefrac{1}{2}-\beta\} - 
  \min\{\nicefrac{1}{q}-\alpha,\nicefrac{1}{(2q)}-\beta\}
$, 
$ x = x $, 
$ y = y $
for 
$ x,y \in H $, 
$ q \in ( 1,  \frac{1}{\max\{\alpha,2\beta,\nicefrac{1}{2}\}} ) $, 
$ p \in [2,\infty) $, 
$ k \in \{ 1, 2, \dots, n \} $ 
in the notation of~\eqref{eq:derivative.difference})
imply that for all 
$ k \in \{ 1, 2, \dots, n \} $,
$ p \in [2,\infty) $, 
$ q \in ( 1,  \frac{1}{\max\{\alpha,2\beta,\nicefrac{1}{2}\}} ) $, 
$
  x,y
  \in H
$
it holds that 
\begin{equation}
\label{eq:derivative.difference.cont}
\begin{split}
& 
  \sup_{\mathbf{u}=(u_1,u_2,\dots,u_k)\in (\nzspace{H})^k }
  \sup_{ t \in (0,T] }
  \left[
  \frac{
    \|
      X_t^{ k, ( x, \mathbf{u} ) }
      -
      X_t^{ k, ( y, \mathbf{u} ) }
    \|_{
      \mathcal{L}^p( \P; H )
    }
  }{
  t^{\min\{\nicefrac{1}{q}-\alpha,\nicefrac{1}{(2q)}-\beta\}}
    \prod^k_{ i=1 }
    \|u_i\|_{ H }
  }
  \right]
\\&\leq
  | T \vee 1 |^{ \lceil k/2 \rceil \min\{ 1-\alpha, \nicefrac{1}{2} - \beta 
  \} }
  \,
  \Theta_{ A, \eta, p, T }^{ 
    \alpha, \beta, -\min\{\nicefrac{1}{q}-\alpha,\nicefrac{1}{(2q)}-\beta\}
  }\big(
    | F |_{ \Cb{1}( H , H_{ - \alpha } ) }
    ,
    | B |_{ \Cb{1}( H , HS( U , H_{ - \beta } ) ) }
  \big)
\\&\cdot
    \Bigg(
  \sum_{
    \varpi \in \Pi_k
  }
    L^{ \mathbf{0}_k }_{\varpi, p( \#_\varpi + 1 )}
    \Bigg[
    \chi^{ \alpha, T }_{ A, \eta }
    \sup_{ t\in( 0,T ] }
    \Bigg\{
    \frac{1}{t^{\min\{\nicefrac{1}{q}-\alpha,\nicefrac{1}{(2q)}-\beta\}}}
    \int_0^t
      \tfrac{
    \|
      F^{ ( \#_\varpi ) }( X_s^{ 0, x } )
      -
      F^{ ( \#_\varpi ) }( X_s^{ 0, y } )
    \|_{
      \mathcal{L}^{ p ( \#_\varpi + 1 ) }(
        \P ; L^{ ( \#_\varpi ) }( H, H_{ -\alpha } ) 
      )
    }
      }{
      ( t - s )^\alpha
      }
    \, \diffns s
    \Bigg\}
\\ &
    +
    \chi^{ \beta, T }_{ A, \eta }
    \sup_{
      t \in (0,T]
    }
    \left\{
    \frac{1}{t^{\min\{\nicefrac{1}{q}-\alpha,\nicefrac{1}{(2q)}-\beta\}}}
    \bigg[
      \tfrac{ p \, ( p - 1 ) }{ 2 }  
      \int_0^t
      \tfrac{
    \|
      B^{ ( \#_\varpi ) }( X_s^{ 0, x } )
      -
      B^{ ( \#_\varpi ) }( X_s^{ 0, y } )
    \|^2_{
      \mathcal{L}^{ p ( \#_\varpi + 1 ) }(
        \P ; L^{ ( \#_\varpi ) }( H, HS( U, H_{ -\beta } ) ) 
      )
    }
      }{
      ( t - s )^{ 2 \beta }
      }
      \,\diffns s
    \bigg]^{ \nicefrac{1}{2} }
      \right\}\Bigg]
\\&+
    \sum_{ \varpi \in \Pi^*_k }
          \sum\limits_{
            I \in \varpi
          }
    \sup_{\mathbf{u}=(u_i)_{i\in I}\in (\nzspace{H})^{\#_I}}\,
            \sup_{ t \in (0,T] }
            \bigg\{
          L^{ \mathbf{0}_k }_{ \varpi \setminus \{I\}, p \, \#_\varpi }
            \frac{
              \|
                X_t^{ \#_I, ( x, \mathbf{u} ) }
                -
                X_t^{ \#_I, ( y, \mathbf{u} ) }
              \|_{
                \mathcal{L}^{ p \#_\varpi }( \P ; H )
              }
            }{
              t^{\min\{\nicefrac{1}{q}-\alpha,\nicefrac{1}{(2q)}-\beta\}}
              \prod_{ i \in I }
              \| u_i \|_{ H }
            }
\\&\cdot
      \bigg[
        \chi^{ \alpha, T }_{ A, \eta }\,
        T^{(
        1 - \alpha - \min\{1-\alpha,\nicefrac{1}{2}-\beta\}
        )}\,
        |F|_{  \Cb{ \#_\varpi }( H, H_{ -\alpha } )  }\,
        \mathbb{B}\big( 1-\alpha, 1 - 
        \min\{1-\alpha,\nicefrac{1}{2}-\beta\} +
        \min\{\nicefrac{1}{q}-\alpha,\nicefrac{1}{(2q)}-\beta\}
        \big)
\\&+
        \chi^{ \beta, T }_{ A, \eta } \,
        T^{(
        \nicefrac{1}{2} - \beta - \min\{1-\alpha,\nicefrac{1}{2}-\beta\}
        )} \,
        |B|_{  \Cb{ \#_\varpi }( H, HS( U, H_{ -\beta } ) )  }
\\&\cdot
        \Big[
        \tfrac{
          p \, (p-1)
        }{ 2 } \,
          \mathbb{B}\big( 1 - 2\beta
          ,
          1 - 
          2\min\{1-\alpha,\nicefrac{1}{2}-\beta\} +
          2\min\{\nicefrac{1}{q}-\alpha,\nicefrac{1}{(2q)}-\beta\}
          \big)
        \Big]^{\nicefrac{1}{2}}
      \bigg]
                  \bigg\}
      \Bigg).
\end{split}
\end{equation}
Induction and \eqref{eq:UI.cont}--\eqref{eq:derivative.difference.cont} 
hence ensure that for all
$ k \in \{ 1, 2, \ldots, n \} $,  
$ p \in [2,\infty) $, 
$ q \in ( 1,  \frac{1}{\max\{\alpha,2\beta,\nicefrac{1}{2}\}} ) $, 
$ x \in H $
it holds that 
\begin{equation}
  \limsup_{ H \ni y \rightarrow x }
  \sup_{\mathbf{u}=(u_1,u_2,\dots,u_k)\in (\nzspace{H})^k }
  \sup_{t\in(0,T]}
  \left[
  \frac{
    \|
      X_t^{k,(x,\mathbf{u})}
      -
      X_t^{k,(y,\mathbf{u})}
    \|_{\mathcal{L}^p(\P;H)}
  }{
    t^{\min\{\nicefrac{1}{q}-\alpha,\nicefrac{1}{(2q)}-\beta\}}
    \prod_{i=1}^k\|u_i\|_H
  }
  \right]
  = 0
  .
\end{equation}
This and~\eqref{eq:proof.derivatives} show that for all 
$ k \in \{ 1, 2, \ldots, n \} $,  
$ p \in [2,\infty) $, 
$ q \in ( 1,  \frac{1}{\max\{\alpha,2\beta,\nicefrac{1}{2}\}} ) $, 
$ x \in H $
it holds that 
\begin{equation}
\label{eq:derivative.continuity}
\begin{split}
&
  \limsup_{ H \ni y \rightarrow x }
  \sup_{\mathbf{u}=(u_1,u_2,\dots,u_k)\in (\nzspace{H})^k }
  \sup_{t\in[0,T]}
  \left[
  \frac{
    \|
      X_t^{k,(x,\mathbf{u})}
      -
      X_t^{k,(y,\mathbf{u})}
    \|_{\mathcal{L}^p(\P;H)}
  }{
    \prod_{i=1}^k\|u_i\|_H
  }
  \right]
\\&\leq
  T^{\min\{\nicefrac{1}{q}-\alpha,\nicefrac{1}{(2q)}-\beta\}}
  \limsup_{ H \ni y \rightarrow x }
  \sup_{\mathbf{u}=(u_1,u_2,\dots,u_k)\in (\nzspace{H})^k }
  \sup_{t\in(0,T]}
  \frac{
    \|
      X_t^{k,(x,\mathbf{u})}
      -
      X_t^{k,(y,\mathbf{u})}
    \|_{\mathcal{L}^p(\P;H)}
  }{
    t^{\min\{\nicefrac{1}{q}-\alpha,\nicefrac{1}{(2q)}-\beta\}}
    \prod_{i=1}^k\|u_i\|_H
  }
  = 0
  .
\end{split}
\end{equation}
Combining~\eqref{eq:derivative.continuity} with item~\eqref{item:thm_derivative} proves item~\eqref{item:thm_derivative.continuous}.

We now prove item~\eqref{item:lem_derivative:frechet} by induction on $k\in\{1,2,\ldots,n\}$. 
Note that~\eqref{eq:initial.perturb} ensures that for all 
$ p \in (0,\infty) $ 
it holds that 
\begin{equation}
\label{eq:tildeL.finite}
  \tilde{L}_p < \infty.
\end{equation}
Furthermore, observe that 
for all 
$ l \in \{ 0, 1 \} $, 
$ \mathbf{u} = ( u_0, u_1 ) \in H^2 $, 
$ t\in[0,T] $ 
it holds that 
\begin{equation}
\label{eq:1st.diff.coeff}
    G_l( X^{ 0, u_0 + u_1 }_t )
    -
    G_l( X^{ 0, u_0 }_t )
  =
  \mathbf{\bar{G}}^{ \mathbf{u} }_{ 1,l }
  (
    t
    ,
      X^{ 0,u_0 + u_1 }_t
      -
      X^{ 0, u_0 }_t
  ).
\end{equation}
This and~\eqref{eq:proof.derivatives} imply that for all 
$ \mathbf{u}=(u_0,u_1) \in H^2 $, 
$ t \in [0,T] $ 
it holds that 
\begin{equation}
\begin{split}
&
 [X^{0,u_0+u_1}_t-X^{0,u_0}_t]_{ \P, \mathcal{B}(H) }
 =
  [e^{ t A } u_1]_{ \P, \mathcal{B}(H) }
\\&+
    \int_0^t
    e^{ ( t - s ) A }
    \,
    \mathbf{\bar{G}}^{ \mathbf{u} }_{ 1, 0 }( s, X^{0,u_0+u_1}_s-X^{0,u_0}_s )
  \, {\bf ds}
  +
  \int_0^t
    e^{ ( t - s ) A }
    \,
    \mathbf{\bar{G}}^{ \mathbf{u} }_{ 1, 1 }( s, X^{0,u_0+u_1}_s-X^{0,u_0}_s )
  \, \diffns W_s
  .  
\end{split}
\end{equation}
Combining this with the Burkholder-Davis-Gundy type inequality
in Lemma~7.7 in Da Prato \& Zabczyk~\cite{dz92}, \eqref{eq:Lip.G.1}, \eqref{eq:proof.derivatives}, and Proposition~2.7 in~\cite{AnderssonJentzenKurniawan2016arXiv} 
(with
$
H=H
$, 
$
U=U
$, 
$ T = T $, 
$ \eta = \eta $, 
$ p = p $, 
$ \alpha = 0 $, 
$
\hat{\alpha} = 0
$,
$ \beta = 0 $, 
$
\hat{\beta} 
= 
0
$,
$L_0=|F|_{\Cb{1}(H,H)}$, 
$
\hat{L}_0
=
0
$, 
$L_1=|B|_{\Cb{1}(H,HS(U,H))}$, 
$
\hat{L}_1
=
0
$, 
$
W=W
$, 
$ A=A $, 
$ \mathbf{F}=\mathbf{G}_{1,0}^{\mathbf{u}} $,
$ \mathbf{B}=\mathbf{G}_{1,1}^{\mathbf{u}} $, 
$
  \delta = 0
$, 
$ Y^1 = X^{ 0, \theta^1_1(\mathbf{u}) } - X^{ 0, \theta^1_0(\mathbf{u}) } $, 
$ Y^2 = X^{1,\mathbf{u}} $,
$ \lambda = 0 $ 
for 
$
  \mathbf{u}=(u_0,u_1)
  \in H^2
$, 
$ p \in [2,\infty) $ 
in the notation of Proposition~2.7 in~\cite{AnderssonJentzenKurniawan2016arXiv})  
ensures that for all  
$ p \in [ 2, \infty ) $, 
$
  \mathbf{u}
  \in H^2
$
it holds that 
\begin{equation}
\label{calc21}
\begin{split}
& \! \sup_{t\in[0,T]}
  \big\|
    X^{ 0, \theta^1_1(\mathbf{u}) }_t - X^{ 0, \theta^1_0(\mathbf{u}) }_t
    - X_t^{ 1, \mathbf{u} }
  \big\|_{
    \mathcal{L}^p ( \P ; H )
  }
  \leq
  \Theta_{A,\eta,p,T}^{0,0,0}
  \big(
    |F|_{\Cb{1}(H,H)},
    |B|_{\Cb{1}(H,HS(U,H))}
  \big)\\
& \! \cdot
  \sup_{t\in(0,T]}
  \Bigg[
  \Bigg\|
      \int_0^t
        e^{(t-s)A}
        \big(
          \mathbf{\bar{G}}_{1,0}^{\mathbf{u}}( s , X^{ 0,\theta^1_1(\mathbf{u}) }_s - X^{ 0,\theta^1_0(\mathbf{u}) }_s )
          -
          \mathbf{G}_{1,0}^{\mathbf{u}}( s , X^{ 0,\theta^1_1(\mathbf{u}) }_s - X^{ 0,\theta^1_0(\mathbf{u}) }_s )
        \big)
      \,{ \bf ds }\\
& \! +
      \int_0^t
        e^{(t-s)A}
        \big(
          \mathbf{\bar{G}}_{1,1}^{\mathbf{u}}( s , X^{ 0,\theta^1_1(\mathbf{u}) }_s - X^{ 0,\theta^1_0(\mathbf{u}) }_s )
          -
          \mathbf{G}_{1,1}^{\mathbf{u}}( s , X^{ 0,\theta^1_1(\mathbf{u}) }_s - X^{ 0,\theta^1_0(\mathbf{u}) }_s )
        \big)
      \,\diffns W_s
    \Bigg\|_{L^p(\P;H)}\Bigg]\\
& \! \leq
  \chi_{A,\eta}^{0,T} \,
  \Theta_{A,\eta,p,T}^{0,0,0}
  \big(
    |F|_{\Cb{1}(H,H)},
    |B|_{\Cb{1}(H,HS(U,H))}
  \big)
  \\
& \! \cdot
  \Bigg[
    \int_0^T    
        \|
          \mathbf{\bar{G}}_{1,0}^{\mathbf{u}}( s , X^{ 0,\theta^1_1(\mathbf{u}) }_s - X^{ 0,\theta^1_0(\mathbf{u}) }_s )
          -
          \mathbf{G}_{1,0}^{\mathbf{u}}( s , X^{ 0,\theta^1_1(\mathbf{u}) }_s - X^{ 0,\theta^1_0(\mathbf{u}) }_s )
        \|_{\mathcal{L}^p(\P;H)}
      \,\diffns s
      \\
& \! +
    \left[
      \tfrac{ p\, (p-1) }{ 2 }
      \int_0^T
        \|
          \mathbf{\bar{G}}_{1,1}^{\mathbf{u}}( s , X^{ 0,\theta^1_1(\mathbf{u}) }_s - X^{ 0,\theta^1_0(\mathbf{u}) }_s )
          -
          \mathbf{G}_{1,1}^{\mathbf{u}}( s , X^{ 0,\theta^1_1(\mathbf{u}) }_s - X^{ 0,\theta^1_0(\mathbf{u}) }_s )
        \|^2_{\mathcal{L}^p(\P;HS(U,H))}
      \,\diffns s
    \right]^{ \nicefrac{1}{2} }
  \Bigg].
\end{split}
\end{equation}
In addition, H\"{o}lder's inequality
yields that for all  
$ p \in [2,\infty) $, 
$ l \in \{ 0,1 \} $, 
$
  \mathbf{u}=( u_0, u_1 )
  \in H \times (\nzspace{H})
$,
$ t \in (0,T] $
it holds that 
\begin{equation}
\label{eq:1st.derivative.nonlinear.difference}
\begin{split}
& 
  \frac{
  \|
    \mathbf{\bar{G}}^{ \mathbf{u} }_{1,l}( t, X^{0,u_0+u_1}_t - X^{0,u_0}_t )
    -
    \mathbf{G}^{ \mathbf{u} }_{1,l}( t, X^{0,u_0+u_1}_t - X^{0,u_0}_t )
  \|_{ \lpn{p}{\P}{V_{l,0}} }
  }
  {
    \| u_1 \|_H
  }
\\&=
  \frac{1}{\|u_1\|_H}
  \left\|
    \int^1_0
    [
    G'_l( X^{0,u_0}_t + \rho[ X^{0,u_0+u_1}_t - X^{0,u_0}_t ] )
    -
    G'_l( X^{0,u_0}_t )
    ]
    (X^{0,u_0+u_1}_t - X^{0,u_0}_t)
    \,\diffns\rho
  \right\|_{ \lpn{p}{\P}{V_{l,0}} }  
\\&\leq
  \tilde{L}_{2p}
    \int^1_0
    \|
    G'_l( X^{0,u_0}_t + \rho[ X^{0,u_0+u_1}_t - X^{0,u_0}_t ] )
    -
    G'_l( X^{0,u_0}_t )
    \|_{ \lpn{2p}{\P}{L(H,V_{l,0})} }  
    \,\diffns\rho  
    .
\end{split}
\end{equation}
In the next step we combine~\eqref{calc21} with~\eqref{eq:1st.derivative.nonlinear.difference} 
and Jensen's inequality to obtain that for all 
$ p\in[2,\infty) $, 
$ \mathbf{u}=(u_0,u_1) \in H \times (\nzspace{H}) $
it holds that 
\begin{equation}
\label{eq:1st.infinitesimal.convergence}
\begin{split}
& \sup_{t\in[0,T]}
  \frac{
  \|
    X^{0,\theta^1_1(\mathbf{u})}_t - X^{0,\theta^1_0(\mathbf{u})}_t - X_t^{ 1,\mathbf{u} }
  \|_{
    \mathcal{L}^p ( \P ; H )
  }
  }{
    \| u_1 \|_{ H }
  }
  \leq
  \tilde{L}_{2p} \,
  \chi_{A,\eta}^{0,T} \,
  \Theta_{A,\eta,p,T}^{0,0,0}
  \big(
    |F|_{\Cb{1}(H,H)},
    |B|_{\Cb{1}(H,HS(U,H))}
  \big)
\\&\cdot
  \Bigg[
    \int_0^T 
    \int^1_0   
    \|
    F'( X^{0,u_0}_s + \rho[ X^{0,u_0+u_1}_s - X^{0,u_0}_s ] )
    - F'( X^{0,u_0}_s )
    \|_{ \lpn{2p}{\P}{L(H,H)} } 
      \,\diffns\rho 
      \,\diffns s\\
&   
    +
    \bigg[
      \tfrac{ p\, (p-1) }{ 2 }
      \int_0^T
      \int^1_0
    \|
    B'( X^{0,u_0}_s + \rho[ X^{0,u_0+u_1}_s - X^{0,u_0}_s ] )
    - B'( X^{0,u_0}_s )
    \|^2_{ \lpn{2p}{\P}{L(H,HS(U,H))} } 
      \,\diffns\rho
      \,\diffns s
    \bigg]^{ \nicefrac{1}{2} }
  \Bigg]
  .
\end{split}
\end{equation}
Furthermore, Lebesgue's theorem of dominated convergence 
and~\eqref{eq:1st.derivative.conv1}
yield that for all 
$ m \in \{ 1,2,\ldots,n \} $, 
$ l \in \{ 0,1 \} $,  
$ p \in [2,\infty) $,
$ u_0 \in H $
it holds that 
\begin{equation}
\label{eq:1st.derivative.integral.limit}
\begin{split}
&
  \limsup_{ H \ni u_1 \rightarrow 0 }
  \int^T_0
  \int^1_0
      \|
      G^{(m)}_l( X^{0,u_0}_s + \rho[ X^{0,u_0+u_1}_s - X^{0,u_0}_s ] )
      -
      G^{(m)}_l( X^{0,u_0}_s )
      \|^{ (l+1) }_{ \lpn{p}{\P}{ L^{(m)}( H, V_{l,0} ) } }
  \,\diffns\rho
  \,\diffns{s}
  = 0
  .
\end{split}
\end{equation}
Combining~\eqref{eq:1st.infinitesimal.convergence} 
with~\eqref{eq:tildeL.finite} and~\eqref{eq:1st.derivative.integral.limit} establishes  item~\eqref{item:lem_derivative:frechet} in the base case $ k=1 $.
For the induction step 
$
  \{ 1,2,\ldots,n-1 \} \ni k
  \to k+1 \in \{ 2,3,\ldots,n \}
$ 
assume that there exists a natural number 
$ k \in\{ 1,2,\ldots,n-1 \} $
such that item~\eqref{item:lem_derivative:frechet} holds 
for 
$k=1$, $k=2$, $\ldots\,$, $k=k$. 
Note that item~\eqref{item:lem_derivative:a_priori} ensures that for all 
$
  m \in \{ 1, 2, \ldots, n \}
$,  
$
  p \in (0,\infty)
$, 
$ x, y \in H $, 
$ v \in \nzspace{H} $
it holds that 
$
  d_{ m,p }(x,y) + \tilde{d}_{ m,p }(x,v) < \infty
$.
We also note that item~\eqref{item:thm_derivative.continuous} and the induction hypothesis assure that for all 
$
  m \in \{ 1, 2, \ldots, k \}
$, 
$
  p \in (0,\infty)
$, 
$
  x \in H
$ 
it holds that 
\begin{equation}
\label{eq:distance.convergence}
  \limsup_{ H \ni y \rightarrow x }
  d_{m,p}(x,y)
  =0
  \qquad
  \text{and}
  \qquad
  \limsup_{ \nzspace{H} \ni v \rightarrow 0 }
  \tilde{d}_{m,p}(x,v)
  =
  0
  .
\end{equation}
Next observe that~\eqref{eq:G.def} shows that
for all 
$ l \in \{ 0, 1 \} $,
$
  \mathbf{u}=(u_0,u_1,\ldots,u_{k+1}) 
  \in H^{k+2}
$,
$ t\in [0,T] $
it holds that 
\begin{equation}
\begin{split}
&
  \mathbf{G}^{\theta^{k+1}_1(\mathbf{u})}_{k,l}( t,X^{ k,\theta^{k+1}_1(\mathbf{u}) }_t ) =
  G'_l( X^{0,u_0+u_{k+1}}_t ) X^{ k,\theta^{k+1}_1(\mathbf{u}) }_t
\\&\quad+
  {\sum_{ \varpi \in \Pi^*_k }}
  G^{ (\#_\varpi) }_l( X^{0,u_0+u_{k+1}}_t )
  \big( 
        X_t^{ \#_{I^\varpi_1}, [ \theta^{k+1}_1(\mathbf{u}) ]_1^{ \varpi } }
        ,
        X_t^{ \#_{I^\varpi_2}, [ \theta^{k+1}_1(\mathbf{u}) ]_2^{ \varpi } }
        ,
        \dots
        ,
        X_t^{ \#_{I^\varpi_{\#_\varpi}}, [ \theta^{k+1}_1(\mathbf{u}) ]_{ \#_\varpi }^{ \varpi } }
  \big)
\end{split}
\end{equation}
and
\begin{equation}
\begin{split}
&
  \mathbf{G}^{\theta^{k+1}_0(\mathbf{u})}_{k,l}( t,X^{ k,\theta^{k+1}_0(\mathbf{u}) }_t ) 
  =
  G'_l( X^{0,u_0}_t ) X^{ k,\theta^{k+1}_0(\mathbf{u}) }_t
\\&\quad+
  {\sum_{ \varpi \in \Pi^*_k }}
  G^{ (\#_\varpi) }_l( X^{0,u_0}_t )
  \big( 
        X_t^{ \#_{I^\varpi_1}, [ \theta^{k+1}_0(\mathbf{u}) ]_1^{ \varpi } }
        ,
        X_t^{ \#_{I^\varpi_2}, [ \theta^{k+1}_0(\mathbf{u}) ]_2^{ \varpi } }
        ,
        \dots
        ,
        X_t^{ \#_{I^\varpi_{\#_\varpi}}, [ \theta^{k+1}_0(\mathbf{u}) ]_{ \#_\varpi }^{ \varpi } }
  \big).
\end{split}
\end{equation}
This implies that for all 
$ l \in \{ 0, 1 \} $,
$
  \mathbf{u}=(u_0,u_1,\ldots,u_{k+1}) 
  \in H^{k+2}
$,
$ t\in [0,T] $
it holds that 
\begin{equation}
\begin{split}
&
  \mathbf{G}^{\theta^{k+1}_1(\mathbf{u})}_{k,l}( t,X^{ k,\theta^{k+1}_1(\mathbf{u}) }_t )
  -
  \mathbf{G}^{\theta^{k+1}_0(\mathbf{u})}_{k,l}( t,X^{ k,\theta^{k+1}_0(\mathbf{u}) }_t )
\\&=
  G'_l( X^{0,u_0+u_{k+1}}_t ) X^{ k,\theta^{k+1}_1(\mathbf{u}) }_t
  -
  G'_l( X^{0,u_0}_t ) X^{ k,\theta^{k+1}_0(\mathbf{u}) }_t
\\&\quad+
  {\sum_{ \varpi \in \Pi^*_k }}
  \Big[
  G^{ (\#_\varpi) }_l( X^{0,u_0+u_{k+1}}_t )
  \big( 
        X_t^{ \#_{I^\varpi_1}, [ \theta^{k+1}_1(\mathbf{u}) ]_1^{ \varpi } }
        ,
        X_t^{ \#_{I^\varpi_2}, [ \theta^{k+1}_1(\mathbf{u}) ]_2^{ \varpi } }
        ,
        \dots
        ,
        X_t^{ \#_{I^\varpi_{\#_\varpi}}, [ \theta^{k+1}_1(\mathbf{u}) ]_{ \#_\varpi }^{ \varpi } }
  \big)
\\&\quad-
  G^{ (\#_\varpi) }_l( X^{0,u_0}_t )
  \big( 
        X_t^{ \#_{I^\varpi_1}, [ \theta^{k+1}_0(\mathbf{u}) ]_1^{ \varpi } }
        ,
        X_t^{ \#_{I^\varpi_2}, [ \theta^{k+1}_0(\mathbf{u}) ]_2^{ \varpi } }
        ,
        \dots
        ,
        X_t^{ \#_{I^\varpi_{\#_\varpi}}, [ \theta^{k+1}_0(\mathbf{u}) ]_{ \#_\varpi }^{ \varpi } }
  \big)
  \Big]
\\&=
  G'_l( X^{0,u_0}_t ) (X^{ k,\theta^{k+1}_1(\mathbf{u}) }_t-X^{ k,\theta^{k+1}_0(\mathbf{u}) }_t)
  +
  [G'_l( X^{0,u_0+u_{k+1}}_t )-G'_l( X^{0,u_0}_t )] X^{ k,\theta^{k+1}_1(\mathbf{u}) }_t
\\&\quad+
  {\sum_{ \varpi \in \Pi^*_k }}
  \Big[
  [G^{ (\#_\varpi) }_l( X^{0,u_0+u_{k+1}}_t )-G^{ (\#_\varpi) }_l( X^{0,u_0}_t )]
  \big( 
        X_t^{ \#_{I^\varpi_1}, [ \theta^{k+1}_1(\mathbf{u}) ]_1^{ \varpi } }
        ,
        X_t^{ \#_{I^\varpi_2}, [ \theta^{k+1}_1(\mathbf{u}) ]_2^{ \varpi } }
        ,
        \dots
        ,
\\&\quad
        X_t^{ \#_{I^\varpi_{\#_\varpi}}, [ \theta^{k+1}_1(\mathbf{u}) ]_{ \#_\varpi }^{ \varpi } }
  \big)
+
  G^{ (\#_\varpi) }_l( X^{0,u_0}_t )
  \big( 
        X_t^{ \#_{I^\varpi_1}, [ \theta^{k+1}_1(\mathbf{u}) ]_1^{ \varpi } }
        ,
        X_t^{ \#_{I^\varpi_2}, [ \theta^{k+1}_1(\mathbf{u}) ]_2^{ \varpi } }
        ,
        \dots
        ,
        X_t^{ \#_{I^\varpi_{\#_\varpi}}, [ \theta^{k+1}_1(\mathbf{u}) ]_{ \#_\varpi }^{ \varpi } }
  \big)
\\&\quad-
  G^{ (\#_\varpi) }_l( X^{0,u_0}_t )
  \big( 
        X_t^{ \#_{I^\varpi_1}, [ \theta^{k+1}_0(\mathbf{u}) ]_1^{ \varpi } }
        ,
        X_t^{ \#_{I^\varpi_2}, [ \theta^{k+1}_0(\mathbf{u}) ]_2^{ \varpi } }
        ,
        \dots
        ,
        X_t^{ \#_{I^\varpi_{\#_\varpi}}, [ \theta^{k+1}_0(\mathbf{u}) ]_{ \#_\varpi }^{ \varpi } }
  \big)
  \Big]
  .
\end{split}
\end{equation}
The fundamental theorem of calculus and~\eqref{eq:barG.def} hence yield that for all 
$ l \in \{ 0, 1 \} $,
$
  \mathbf{u}
  \in H^{k+2}
$,
$ t\in [0,T] $
it holds that 
\begin{equation}
\label{eq:kth.diff.coeff}
\begin{split}
&
  \mathbf{G}^{\theta^{k+1}_1(\mathbf{u})}_{k,l}( t,X^{ k,\theta^{k+1}_1(\mathbf{u}) }_t )
  -
  \mathbf{G}^{\theta^{k+1}_0(\mathbf{u})}_{k,l}( t,X^{ k,\theta^{k+1}_0(\mathbf{u}) }_t )
=
  \mathbf{\bar{G}}^{\mathbf{u}}_{k+1,l}
  ( 
  t,
  X^{ k,\theta^{k+1}_1(\mathbf{u}) }_t - X^{ k,\theta^{k+1}_0(\mathbf{u}) }_t 
  )
  .
\end{split}
\end{equation}
This, \eqref{eq:proof.derivatives}, and~\eqref{eq:G.def} imply that for all 
$
  \mathbf{u} \in H^{k+2}
$, 
$ t\in[0,T] $
it holds that 
\begin{equation}
\label{eq:derivative.processes}
\begin{split}
&
 \big[ X^{k,\theta^{k+1}_1(\mathbf{u})}_t-X^{k,\theta^{k+1}_0(\mathbf{u})}_t \big]_{ \P, \mathcal{B}(H) }
\\&=
    \int_0^t
    e^{ ( t - s ) A }
    \big[
  \mathbf{G}^{\theta^{k+1}_1(\mathbf{u})}_{k,0}( s,X^{ k,\theta^{k+1}_1(\mathbf{u}) }_s )
  -
  \mathbf{G}^{\theta^{k+1}_0(\mathbf{u})}_{k,0}( s,X^{ k,\theta^{k+1}_0(\mathbf{u}) }_s )
    \big]
  \, {\bf ds}
\\&\quad+
  \int_0^t
    e^{ ( t - s ) A }
    \big[
  \mathbf{G}^{\theta^{k+1}_1(\mathbf{u})}_{k,1}( s,X^{ k,\theta^{k+1}_1(\mathbf{u}) }_s )
  -
  \mathbf{G}^{\theta^{k+1}_0(\mathbf{u})}_{k,1}( s,X^{ k,\theta^{k+1}_0(\mathbf{u}) }_s )
    \big]
  \, \diffns W_s
\\&=
    \int_0^t
    e^{ ( t - s ) A }
    \,
    \mathbf{\bar{G}}^{ \mathbf{u} }_{ k+1, 0 }( s, X^{k,\theta^{k+1}_1(\mathbf{u})}_s-X^{k,\theta^{k+1}_0(\mathbf{u})}_s )
  \, {\bf ds}
\\&\quad+
  \int_0^t
    e^{ ( t - s ) A }
    \,
    \mathbf{\bar{G}}^{ \mathbf{u} }_{ k+1, 1 }( s, X^{k,\theta^{k+1}_1(\mathbf{u})}_s-X^{k,\theta^{k+1}_0(\mathbf{u})}_s )
  \, \diffns W_s
  .  
\end{split}
\end{equation}
Combining this with~\eqref{eq:Lip.G.n}, \eqref{eq:singular.G.n}, \eqref{eq:proof.derivatives}, and 
Proposition~2.7 in~\cite{AnderssonJentzenKurniawan2016arXiv} (with
$
H=H
$, 
$
U=U
$, 
$ T = T $, 
$ \eta = \eta $, 
$ p = p $, 
$ \alpha = 0 $, 
$
\hat{\alpha} = 0
$,
$ \beta = 0 $, 
$
\hat{\beta} 
= 
0
$,
$L_0=|F|_{\Cb{1}(H,H)}$, 
$
\hat{L}_0
=
    \smallsum_{ \varpi \in \Pi_{ k + 1 }^{ * } }
    | F |_{ 
      \Cb{ \#_\varpi }( H, H ) 
    }
      \smallprod_{ i = 1 }^{ \#_\varpi }
      \big\|\stochval{X^{
                  \#_{I^\varpi_i}, 
                  [ \mathbf{u} ]_i^{ \varpi }
                }}\big\|_{ \mathbb{L}^{ p \#_\varpi } }
$, 
$L_1=|B|_{\Cb{1}(H,HS(U,H))}$, 
$
\hat{L}_1
=
    \smallsum_{ \varpi \in \Pi_{ k + 1 }^{ * } }
    | B |_{ 
      \Cb{ \#_\varpi }( H, HS(U,H) ) 
    }
      \smallprod_{ i = 1 }^{ \#_\varpi }
      \big\|\stochval{X^{
                  \#_{I^\varpi_i}, 
                  [ \mathbf{u} ]_i^{ \varpi }
                }}\big\|_{ \mathbb{L}^{ p \#_\varpi } }
$, 
$
W=W
$, 
$ A=A $, 
$ \mathbf{F}=\mathbf{G}_{k+1,0}^{\mathbf{u}} $,
$ \mathbf{B}=\mathbf{G}_{k+1,1}^{\mathbf{u}} $, 
$
  \delta = 0
$, 
$ Y^1 = X^{ k,\theta^{k+1}_1(\mathbf{u}) } - X^{ k,\theta^{k+1}_0(\mathbf{u}) } $, 
$ Y^2 = X^{k+1,\mathbf{u}} $,
$ \lambda = 0 $ 
for 
$
  \mathbf{u}
  \in H^{k+2}
$, 
$ p \in [2,\infty) $
in the notation of Proposition~2.7 in~\cite{AnderssonJentzenKurniawan2016arXiv})  
implies that for all  
$ p \in [ 2, \infty ) $, 
$
  \mathbf{u}
  \in H^{k+2}
$
it holds that 
\begin{equation}
\begin{split}
& \sup_{t\in[0,T]}
  \big\|
    X^{ k,\theta^{k+1}_1(\mathbf{u}) }_t - X^{ k,\theta^{k+1}_0(\mathbf{u}) }_t
    - X_t^{ k+1,\mathbf{u} }
  \big\|_{
    \mathcal{L}^p ( \P ; H )
  }
  \leq
  \Theta_{A,\eta,p,T}^{0,0,0}
  \big(
    |F|_{\Cb{1}(H,H)},
    |B|_{\Cb{1}(H,HS(U,H))}
  \big)\\
& \cdot
  \sup_{t\in(0,T]}
  \Bigg[
  \Bigg\|
      \int_0^t
        e^{(t-s)A}
        \big(
          \mathbf{\bar{G}}_{k+1,0}^{\mathbf{u}}( s , X^{ k,\theta^{k+1}_1(\mathbf{u}) }_s - X^{ k,\theta^{k+1}_0(\mathbf{u}) }_s )
          -
          \mathbf{G}_{k+1,0}^{\mathbf{u}}( s , X^{ k,\theta^{k+1}_1(\mathbf{u}) }_s - X^{ k,\theta^{k+1}_0(\mathbf{u}) }_s )
        \big)
      \,{ \bf ds }\\
& +
      \int_0^t
        e^{(t-s)A}
        \big(
          \mathbf{\bar{G}}_{k+1,1}^{\mathbf{u}}( s , X^{ k,\theta^{k+1}_1(\mathbf{u}) }_s - X^{ k,\theta^{k+1}_0(\mathbf{u}) }_s )
          -
          \mathbf{G}_{k+1,1}^{\mathbf{u}}( s , X^{ k,\theta^{k+1}_1(\mathbf{u}) }_s - X^{ k,\theta^{k+1}_0(\mathbf{u}) }_s )
        \big)
      \,\diffns W_s
    \Bigg\|_{L^p(\P;H)}\Bigg]
  .
\end{split}
\end{equation}
The Burkholder-Davis-Gundy type inequality
in Lemma~7.7 in Da Prato \& Zabczyk~\cite{dz92} hence shows that 
for all  
$ p \in [ 2, \infty ) $, 
$
  \mathbf{u}
  \in H^{k+2}
$
it holds that 
\begin{equation}
\label{calc2}
\begin{split}
& \sup_{t\in[0,T]}
  \big\|
    X^{ k,\theta^{k+1}_1(\mathbf{u}) }_t - X^{ k,\theta^{k+1}_0(\mathbf{u}) }_t
    - X_t^{ k+1,\mathbf{u} }
  \big\|_{
    \mathcal{L}^p ( \P ; H )
  }
  \leq
  \chi_{A,\eta}^{0,T} \,
  \Theta_{A,\eta,p,T}^{0,0,0}
  \big(
    |F|_{\Cb{1}(H,H)},
    |B|_{\Cb{1}(H,HS(U,H))}
  \big)
  \\
& \cdot
  \Bigg[
    \int_0^T    
        \|
          \mathbf{\bar{G}}_{k+1,0}^{\mathbf{u}}( t , X^{ k,\theta^{k+1}_1(\mathbf{u}) }_t - X^{ k,\theta^{k+1}_0(\mathbf{u}) }_t )
          -
          \mathbf{G}_{k+1,0}^{\mathbf{u}}( t , X^{ k,\theta^{k+1}_1(\mathbf{u}) }_t - X^{ k,\theta^{k+1}_0(\mathbf{u}) }_t )
        \|_{\mathcal{L}^p(\P;H)}
      \,\diffns t
+
    \left[
      \tfrac{ p\, (p-1) }{ 2 }
    \right]^{ \nicefrac{1}{2} }
\\&\cdot
    \left[
      \int_0^T
        \|
          \mathbf{\bar{G}}_{k+1,1}^{\mathbf{u}}( t , X^{ k,\theta^{k+1}_1(\mathbf{u}) }_t - X^{ k,\theta^{k+1}_0(\mathbf{u}) }_t )
          -
          \mathbf{G}_{k+1,1}^{\mathbf{u}}( t , X^{ k,\theta^{k+1}_1(\mathbf{u}) }_t - X^{ k,\theta^{k+1}_0(\mathbf{u}) }_t )
        \|^2_{\mathcal{L}^p(\P;HS(U,H))}
      \,\diffns t
    \right]^{ \nicefrac{1}{2} }
  \Bigg]
  .
\end{split}
\end{equation}
Next observe that for all $m\in\N$ 
it holds that 
\begin{equation}
\begin{split}
&
  \Pi_{m+1}
  =
  \Big\{ 
  \varpi \cup \big\{\{m+1\}\big\}
  \colon 
  \varpi \in \Pi_m 
  \Big\}
\\&
  \biguplus
  \Big\{
    \big\{
      I^\varpi_1, I^\varpi_2, \ldots, I^\varpi_{i-1}, 
      I^\varpi_i \cup \{m+1\}, I^\varpi_{i+1}, I^\varpi_{i+2}, 
      \ldots, I^\varpi_{\#_\varpi}
    \big\}
    \colon
    i \in \{1,2,\ldots,\#_\varpi\}, \,
    \varpi \in \Pi_m
  \Big\}
  .
\end{split}
\end{equation}
This implies that for all $m\in\N$ 
it holds that 
\begin{equation}
\label{eq:recursive.rep}
\begin{split}
&
  \Pi^*_{m+1}
  =
  \Big\{ 
  \varpi \cup \big\{\{m+1\}\big\}
  \colon 
  \varpi \in \Pi_m 
  \Big\}
\\&
  \biguplus
  \Big\{
    \big\{
      I^\varpi_1, I^\varpi_2, \ldots, I^\varpi_{i-1}, 
      I^\varpi_i \cup \{m+1\}, I^\varpi_{i+1}, I^\varpi_{i+2}, 
      \ldots, I^\varpi_{\#_\varpi}
    \big\}
    \colon
    i \in \{1,2,\ldots,\#_\varpi\}, \,
    \varpi \in \Pi^*_m
  \Big\}
\\&=
  \Big\{\big\{\{1,2,\ldots,m\},\{m+1\}\big\}\Big\}
  \biguplus
  \bigg[
  \bigcup_{\varpi\in\Pi^*_m}
  \Big(
  \Big\{ 
  \varpi \cup \big\{\{m+1\}\big\}
  \Big\}
\\&
  \biguplus
  \Big\{
    \big\{
      I^\varpi_1, I^\varpi_2, \ldots, I^\varpi_{i-1}, 
      I^\varpi_i \cup \{m+1\}, I^\varpi_{i+1}, I^\varpi_{i+2}, 
      \ldots, I^\varpi_{\#_\varpi}
    \big\}
    \colon
    i \in \{1,2,\ldots,\#_\varpi\}
  \Big\}\Big)\bigg]
  .
\end{split}
\end{equation}
This and~\eqref{eq:G.def} prove that for all 
$ l\in\{0,1\} $, 
$
  \mathbf{u} 
  = ( u_0, u_1, \dots, u_{k+1} ) 
  \in H^{k+2}
$,
$ x \in H $, 
$ t \in [0,T] $ 
it holds that
\begin{equation}
\begin{split}
&\!\!\!\!
  \mathbf{G}^{\mathbf{u}}_{ k+1,l }( t, x )
  =
  G'_l( X^{0,u_0}_t ) x
  +
  {\sum_{ \varpi \in \Pi^*_{k+1} }}
  G^{ (\#_\varpi) }_l( X^{0,u_0}_t )
  \big( 
        X_t^{ \#_{I^\varpi_1}, [ \mathbf{u} ]_1^{ \varpi } }
        ,
        X_t^{ \#_{I^\varpi_2}, [ \mathbf{u} ]_2^{ \varpi } }
        ,
        \dots
        ,
        X_t^{ \#_{I^\varpi_{\#_\varpi}}, [\mathbf{u} ]_{ \#_\varpi }^{ \varpi } }
  \big)
\\&\!\!\!\!=
  G'_l( X^{0,u_0}_t ) \, x
  +
  G''_l( X^{0,u_0}_t )\big(
    X^{ k,\theta^{k+1}_0(\mathbf{u}) }_t
    ,
    X^{ 1,( u_0, u_{k+1} ) }_t
  \big)
\\&\!\!\!\!+
  \sum_{ \varpi \in \Pi^*_k }
  \Bigg[
    G^{ (\#_\varpi + 1) }_l( X^{0,u_0}_t )\Big(
      X^{ \#_{I^\varpi_1}, [\theta^{k+1}_0(\mathbf{u})]^\varpi_1 }_t
      ,
      X^{ \#_{I^\varpi_2}, [\theta^{k+1}_0(\mathbf{u})]^\varpi_2 }_t
      ,
      \ldots
      ,
      X^{ \#_{I^\varpi_{\#_\varpi}}, [\theta^{k+1}_0(\mathbf{u})]^\varpi_{\#_\varpi} }_t
      ,
      X^{ 1,(u_0,u_{k+1}) }_t
    \Big)
\\&\!\!\!\!+\sum^{\#_\varpi}_{ i=1 }
    G^{ (\#_\varpi) }_l( X^{0,u_0}_t )\Big(
      X^{ \#_{I^\varpi_1}, [\theta^{k+1}_0(\mathbf{u})]^\varpi_1 }_t
      ,
      X^{ \#_{I^\varpi_2}, [\theta^{k+1}_0(\mathbf{u})]^\varpi_2 }_t
      ,
      \dots
      ,
      X^{ \#_{I^\varpi_{i-1}}, [\theta^{k+1}_0(\mathbf{u})]^\varpi_{i-1} }_t
      ,
\\&\!\!\!\!\quad
      X^{ \#_{I^\varpi_i}+1, ([\theta^{k+1}_0(\mathbf{u})]^\varpi_i, u_{k+1}) }_t
      ,
      X^{ \#_{I^\varpi_{i+1}}, [\theta^{k+1}_0(\mathbf{u})]^\varpi_{i+1} }_t
      ,
      X^{ \#_{I^\varpi_{i+2}}, [\theta^{k+1}_0(\mathbf{u})]^\varpi_{i+2} }_t
      ,
      \ldots
      ,
      X^{ \#_{I^\varpi_{\#_\varpi}}, [\theta^{k+1}_0(\mathbf{u})]^\varpi_{\#_\varpi} }_t
    \Big)
  \Bigg].
\end{split}
\end{equation}
Moreover, observe that~\eqref{eq:barG.def} shows that for all 
$ l\in\{0,1\} $, 
$
  \mathbf{u} 
  = ( u_0, u_1, \dots, u_{k+1} ) 
  \in H^{k+2}
$,
$ x \in H $, 
$ t \in [0,T] $ 
it holds that
\begin{equation}
\begin{split}
&
  \mathbf{\bar{G}}^{ \mathbf{u} }_{
    k+1, l
  }( t, x )
  =
    G'_l( X_t^{ 0, u_0 } ) \, x
\\&+
    \int^1_0
    G_l''\big(
      X_t^{ 0, u_0 } + \rho [ X^{0,u_0+u_{k+1}}_t - X^{0,u_0}_t ]
    \big)
    \big(
      X_t^{ k, \theta^{k+1}_1( \mathbf{u} ) } ,
      X^{0,u_0+u_{k+1}}_t - X^{0,u_0}_t
    \big)
  \, \diffns\rho
\\
&
  +
  \sum_{
    \varpi \in \Pi_k^{ * }
  }
  \Bigg[
    \int^1_0
    G_l^{ ( \#_\varpi + 1 ) }\big(
      X_t^{ 0, u_0 } 
      + 
      \rho [X^{0,u_0+u_{k+1}}_t - X^{0,u_0}_t]
    \big)
    \big(
        X_t^{ \#_{I^\varpi_1}, [ \theta^{k+1}_1(\mathbf{u}) ]_1^{ \varpi } }
        ,
        X_t^{ \#_{I^\varpi_2}, [ \theta^{k+1}_1(\mathbf{u}) ]_2^{ \varpi } }
        ,
        \dots
        ,
\\&\quad
        X_t^{ \#_{I^\varpi_{\#_\varpi}}, [ \theta^{k+1}_1(\mathbf{u}) ]_{ \#_\varpi }^{ \varpi } }
      ,
      X^{0,u_0+u_{k+1}}_t - X^{0,u_0}_t
    \big)
    \, \diffns\rho
\\&+
  \sum^{\#_\varpi}_{ i=1 }
  \displaystyle
    G^{ ( \#_\varpi ) }_l( X_t^{ 0, u_0 } )\big(
      X_t^{ \#_{I^\varpi_1}, [ \theta^{k+1}_0(\mathbf{u}) ]_1^{ \varpi } }
      ,
      X_t^{ \#_{I^\varpi_2}, [ \theta^{k+1}_0(\mathbf{u}) ]_2^{ \varpi } }
      ,
      \dots
      ,
      X_t^{ \#_{I^\varpi_{i-1}}, [ \theta^{k+1}_0(\mathbf{u}) ]_{i-1}^{ \varpi } }
      ,
      X_t^{ \#_{I^\varpi_i}, [\theta^{k+1}_1(\mathbf{u})]^\varpi_i }
\\&\quad-
      X_t^{ \#_{I^\varpi_i}, [ \theta^{k+1}_0(\mathbf{u}) ]_i^{ \varpi } }
      ,
      X_t^{ \#_{I^\varpi_{i+1}}, [ \theta^{k+1}_1( \mathbf{u} ) ]_{i+1}^\varpi }
      ,
      X_t^{ \#_{I^\varpi_{i+2}}, [ \theta^{k+1}_1( \mathbf{u} ) ]_{i+2}^\varpi }
      ,
      \dots
      ,
      X_t^{ \#_{I^\varpi_{\#_\varpi}}, [ \theta^{k+1}_1( \mathbf{u} ) ]_{ \#_\varpi }^\varpi }
    \big)
  \Bigg]
  .
\end{split}
\end{equation}
This implies that for all 
$ l\in\{0,1\} $, 
$
  \mathbf{u} 
  = ( u_0, u_1, \dots, u_{k+1} ) 
  \in H^{k+2}
$,
$ t \in [0,T] $
it holds that 
\begin{equation}
\begin{split}
&\!\!\!\!
  \mathbf{\bar{G}}^{\mathbf{u}}_{ k+1,l }( t, X^{k,\theta^{k+1}_1(\mathbf{u})}_t - X^{k,\theta^{k+1}_0(\mathbf{u})}_t )
  -
  \mathbf{G}^{\mathbf{u}}_{ k+1,l }( t, X^{k,\theta^{k+1}_1(\mathbf{u})}_t - X^{k,\theta^{k+1}_0(\mathbf{u})}_t )
\\&\!\!\!\!=
  \sum_{
    \varpi \in \Pi_k^{ * }
  }
  \sum^{\#_\varpi}_{ i=1 }
    \Big[
    G^{ ( \#_\varpi ) }_l( X_t^{ 0,u_0 } )\big(
      X_t^{ \#_{I^\varpi_1}, [ \theta^{k+1}_0(\mathbf{u}) ]_1^{ \varpi } }
      ,
      X_t^{ \#_{I^\varpi_2}, [ \theta^{k+1}_0(\mathbf{u}) ]_2^{ \varpi } }
      ,
      \dots
      ,
      X_t^{ \#_{I^\varpi_{i-1}}, [ \theta^{k+1}_0(\mathbf{u}) ]_{i-1}^{ \varpi } }
      ,
\\&\!\!\!\!
      X_t^{ \#_{I^\varpi_i}, [\theta^{k+1}_1(\mathbf{u})]^\varpi_i }
      -
      X_t^{ \#_{I^\varpi_i}, [ \theta^{k+1}_0(\mathbf{u}) ]_i^{ \varpi } }
      -
      X_t^{ \#_{I^\varpi_i}+1, ( [ \theta^{k+1}_0(\mathbf{u}) ]_i^{ \varpi }, u_{k+1} ) }
      ,
      X_t^{ \#_{I^\varpi_{i+1}}, [ \theta^{k+1}_1( \mathbf{u} ) ]_{i+1}^\varpi }
      ,
\\&\!\!\!\!
      X_t^{ \#_{I^\varpi_{i+2}}, [ \theta^{k+1}_1( \mathbf{u} ) ]_{i+2}^\varpi }
      ,
      \dots
      ,
      X_t^{ \#_{I^\varpi_{\#_\varpi}}, [ \theta^{k+1}_1( \mathbf{u} ) ]_{ \#_\varpi }^\varpi }
    \big)
\\&\!\!\!\!+
    G^{ ( \#_\varpi ) }_l( X_t^{ 0,u_0 } )
    \big(
      X_t^{ \#_{I^\varpi_1}, [ \theta^{k+1}_0(\mathbf{u}) ]_1^{ \varpi } }
      ,
      X_t^{ \#_{I^\varpi_2}, [ \theta^{k+1}_0(\mathbf{u}) ]_2^{ \varpi } }
      ,
      \dots
      ,
      X_t^{ \#_{I^\varpi_{i-1}}, [ \theta^{k+1}_0(\mathbf{u}) ]_{i-1}^{ \varpi } }
      ,
\\&\!\!\!\!
      X_t^{ \#_{I^\varpi_i}+1, ( [ \theta^{k+1}_0(\mathbf{u}) ]_i^{ \varpi }, u_{k+1} ) }
      ,
      X_t^{ \#_{I^\varpi_{i+1}}, [ \theta^{k+1}_1( \mathbf{u} ) ]_{i+1}^{ \varpi } }
      ,
      X_t^{ \#_{I^\varpi_{i+2}}, [ \theta^{k+1}_1( \mathbf{u} ) ]_{i+2}^{ \varpi } }
      ,
      \dots
      ,
      X_t^{ \#_{I^\varpi_{\#_\varpi}}, [ \theta^{k+1}_1( \mathbf{u} ) ]_{ \#_\varpi }^{ \varpi } } 
    \big) 
\\&\!\!\!\!-
    G^{ ( \#_\varpi ) }_l( X_t^{ 0,u_0 } )
    \big(
      X_t^{ \#_{I^\varpi_1}, [ \theta^{k+1}_0(\mathbf{u}) ]_1^{ \varpi } }
      ,
      X_t^{ \#_{I^\varpi_2}, [ \theta^{k+1}_0(\mathbf{u}) ]_2^{ \varpi } }
      ,
      \dots
      ,
      X_t^{ \#_{I^\varpi_{i-1}}, [ \theta^{k+1}_0(\mathbf{u}) ]_{i-1}^{ \varpi } }
      ,
\\&\!\!\!\!
      X_t^{ \#_{I^\varpi_i}+1, ( [ \theta^{k+1}_0(\mathbf{u}) ]_i^{ \varpi }, u_{k+1} ) }
      ,
      X_t^{ \#_{I^\varpi_{i+1}}, [ \theta^{k+1}_0( \mathbf{u} ) ]_{i+1}^{ \varpi } }
      ,
      X_t^{ \#_{I^\varpi_{i+2}}, [ \theta^{k+1}_0( \mathbf{u} ) ]_{i+2}^{ \varpi } }
      ,
      \dots
      ,
      X_t^{ \#_{I^\varpi_{\#_\varpi}}, [ \theta^{k+1}_0( \mathbf{u} ) ]_{ \#_\varpi }^{ \varpi } } 
    \big) 
    \Big]  
\\
&\!\!\!\!
    +
      \sum_{
        \varpi \in \Pi_k
      }
    \bigg[
    \int^1_0
    \big[
    G_l^{ ( \#_\varpi + 1 ) }\big(
      X_t^{ 0, u_0 } 
      + 
      \rho [X^{0,u_0+u_{k+1}}_t-X^{0,u_0}_t]
    \big)
    -
    G_l^{ ( \#_\varpi + 1 ) }\big(
      X_t^{ 0,u_0 } 
    \big)
    \big]
    \big(
      X_t^{ \#_{I^\varpi_1}, [ \theta^{k+1}_1( \mathbf{u} ) ]_1^\varpi }
      ,
\\&\!\!\!\!
      X_t^{ \#_{I^\varpi_2}, [ \theta^{k+1}_1( \mathbf{u} ) ]_2^\varpi }
      ,
      \dots
      ,
      X_t^{ \#_{I^\varpi_{\#_\varpi}}, [ \theta^{k+1}_1( \mathbf{u} ) ]_{ \#_\varpi }^{ \varpi } }
      ,
      X^{0,u_0+u_{k+1}}_t-X^{0,u_0}_t
    \big)
    \, \diffns\rho
\\&\!\!\!\!+
    G_l^{ ( \#_\varpi + 1 ) }(
      X_t^{ 0,u_0 } 
    )\big(
    X_t^{ \#_{I^\varpi_1}, [ \theta^{k+1}_0( \mathbf{u} ) ]_1^\varpi }
    ,
    X_t^{ \#_{I^\varpi_2}, [ \theta^{k+1}_0( \mathbf{u} ) ]_2^\varpi }
    ,
    \ldots
    ,
    X_t^{ \#_{I^\varpi_{\#_\varpi}}, [ \theta^{k+1}_0( \mathbf{u} ) ]_{\#_\varpi}^\varpi }
    ,
\\&\!\!\!\!
    X^{0,u_0+u_{k+1}}_t-X^{0,u_0}_t
    -
    X_t^{ 1,( u_0, u_{k+1} ) }
    \big)
\\&\!\!\!\!+
    G_l^{ ( \#_\varpi + 1 ) }(
      X_t^{ 0,u_0 } 
    )\big(
    X_t^{ \#_{I^\varpi_1}, [ \theta^{k+1}_1( \mathbf{u} ) ]_1^\varpi }
    ,
    X_t^{ \#_{I^\varpi_2}, [ \theta^{k+1}_1( \mathbf{u} ) ]_2^\varpi }
    ,
    \ldots
    ,
    X_t^{ \#_{I^\varpi_{\#_\varpi}}, [ \theta^{k+1}_1( \mathbf{u} ) ]_{\#_\varpi}^\varpi }
    ,
    X^{0,u_0+u_{k+1}}_t-X^{0,u_0}_t
    \big)
\\&\!\!\!\!-
    G_l^{ ( \#_\varpi + 1 ) }(
      X_t^{ 0,u_0 } 
    )\big(
    X_t^{ \#_{I^\varpi_1}, [ \theta^{k+1}_0( \mathbf{u} ) ]_1^\varpi }
    ,
    X_t^{ \#_{I^\varpi_2}, [ \theta^{k+1}_0( \mathbf{u} ) ]_2^\varpi }
    ,
    \ldots
    ,
    X_t^{ \#_{I^\varpi_{\#_\varpi}}, [ \theta^{k+1}_0( \mathbf{u} ) ]_{\#_\varpi}^\varpi }
    ,
    X^{0,u_0+u_{k+1}}_t-X^{0,u_0}_t
    \big)
    \bigg]
    .
\end{split}
\end{equation}
This assures that for all 
$ l\in\{0,1\} $, 
$
  \mathbf{u} 
  = ( u_0, u_1, \dots, u_{k+1} ) 
  \in H^{k+2}
$,
$ t \in [0,T] $
it holds that 
\begin{equation}
\label{eq:explain.start}
\begin{split}
&\!\!
  \mathbf{\bar{G}}^{\mathbf{u}}_{ k+1,l }( t, X^{k,\theta^{k+1}_1(\mathbf{u})}_t - X^{k,\theta^{k+1}_0(\mathbf{u})}_t )
  -
  \mathbf{G}^{\mathbf{u}}_{ k+1,l }( t, X^{k,\theta^{k+1}_1(\mathbf{u})}_t - X^{k,\theta^{k+1}_0(\mathbf{u})}_t )
\\&\!\!=
  \sum_{
    \varpi \in \Pi_k^{ * }
  }
  \sum^{\#_\varpi}_{ i=1 }
    \Bigg[
    G^{ ( \#_\varpi ) }_l( X_t^{ 0,u_0 } )\big(
      X_t^{ \#_{I^\varpi_1}, [ \theta^{k+1}_0(\mathbf{u}) ]_1^{ \varpi } }
      ,
      X_t^{ \#_{I^\varpi_2}, [ \theta^{k+1}_0(\mathbf{u}) ]_2^{ \varpi } }
      ,
      \dots
      ,
      X_t^{ \#_{I^\varpi_{i-1}}, [ \theta^{k+1}_0(\mathbf{u}) ]_{i-1}^{ \varpi } }
      ,
\\&\!\!
      X_t^{ \#_{I^\varpi_i}, [\theta^{k+1}_1(\mathbf{u})]^\varpi_i }
      -
      X_t^{ \#_{I^\varpi_i}, [ \theta^{k+1}_0(\mathbf{u}) ]_i^{ \varpi } }
      -
      X_t^{ \#_{I^\varpi_i}+1, ( [ \theta^{k+1}_0(\mathbf{u}) ]_i^{ \varpi }, u_{k+1} ) }
      ,
      X_t^{ \#_{I^\varpi_{i+1}}, [ \theta^{k+1}_1( \mathbf{u} ) ]_{i+1}^\varpi }
      ,
\\&\!\!
      X_t^{ \#_{I^\varpi_{i+2}}, [ \theta^{k+1}_1( \mathbf{u} ) ]_{i+2}^\varpi }
      ,
      \dots
      ,
      X_t^{ \#_{I^\varpi_{\#_\varpi}}, [ \theta^{k+1}_1( \mathbf{u} ) ]_{ \#_\varpi }^\varpi }
    \big)
\\&\!\!+
    \sum^{\#_\varpi}_{j=i+1}
    G^{ ( \#_\varpi ) }_l( X_t^{ 0,u_0 } )
    \big(
      X_t^{ \#_{I^\varpi_1}, [ \theta^{k+1}_0(\mathbf{u}) ]_1^{ \varpi } }
      ,
      X_t^{ \#_{I^\varpi_2}, [ \theta^{k+1}_0(\mathbf{u}) ]_2^{ \varpi } }
      ,
      \dots
      ,
      X_t^{ \#_{I^\varpi_{i-1}}, [ \theta^{k+1}_0(\mathbf{u}) ]_{i-1}^{ \varpi } }
      ,
\\&\!\!
      X_t^{ \#_{I^\varpi_i}+1, ( [ \theta^{k+1}_0(\mathbf{u}) ]_i^{ \varpi }, u_{k+1} ) }
      ,
      X_t^{ \#_{I^\varpi_{i+1}}, [ \theta^{k+1}_0( \mathbf{u} ) ]_{i+1}^{ \varpi } }
      ,
      X_t^{ \#_{I^\varpi_{i+2}}, [ \theta^{k+1}_0( \mathbf{u} ) ]_{i+2}^{ \varpi } }
      ,
      \dots
      ,
      X_t^{ \#_{I^\varpi_{j-1}}, [ \theta^{k+1}_0( \mathbf{u} ) ]_{ j-1 }^{ \varpi } }
      ,
\\&\!\!
      X_t^{ \#_{I^\varpi_j}, [ \theta^{k+1}_1( \mathbf{u} ) ]_j^{ \varpi } }
      -
      X_t^{ \#_{I^\varpi_j}, [ \theta^{k+1}_0( \mathbf{u} ) ]_j^{ \varpi } }
      ,
      X_t^{ \#_{I^\varpi_{j+1}}, [ \theta^{k+1}_1( \mathbf{u} ) ]_{ j+1 }^{ \varpi } }
      ,
      X_t^{ \#_{I^\varpi_{j+2}}, [ \theta^{k+1}_1( \mathbf{u} ) ]_{ j+2 }^{ \varpi } }
      ,
      \dots 
      ,
      X_t^{ \#_{I^\varpi_{\#_\varpi}}, [ \theta^{k+1}_1( \mathbf{u} ) ]_{ \#_\varpi }^{ \varpi } } 
    \big) 
    \Bigg]  
\\
&\!\!
    +
      \sum_{
        \varpi \in \Pi_k
      }
    \Bigg[
    \int^1_0
    \big[
    G_l^{ ( \#_\varpi + 1 ) }\big(
      X_t^{ 0, u_0 } 
      + 
      \rho [X^{0,u_0+u_{k+1}}_t-X^{0,u_0}_t]
    \big)
    -
    G_l^{ ( \#_\varpi + 1 ) }\big(
      X_t^{ 0,u_0 } 
    \big)
    \big]
    \big(
      X_t^{ \#_{I^\varpi_1}, [ \theta^{k+1}_1( \mathbf{u} ) ]_1^\varpi }
      ,
\\&\!\!
      X_t^{ \#_{I^\varpi_2}, [ \theta^{k+1}_1( \mathbf{u} ) ]_2^\varpi }
      ,
      \dots
      ,
      X_t^{ \#_{I^\varpi_{\#_\varpi}}, [ \theta^{k+1}_1( \mathbf{u} ) ]_{ \#_\varpi }^{ \varpi } }
      ,
      X^{0,u_0+u_{k+1}}_t-X^{0,u_0}_t
    \big)
    \, \diffns\rho
\\&\!\!+
    G_l^{ ( \#_\varpi + 1 ) }(
      X_t^{ 0,u_0 } 
    )\big(
    X_t^{ \#_{I^\varpi_1}, [ \theta^{k+1}_0( \mathbf{u} ) ]_1^\varpi }
    ,
    X_t^{ \#_{I^\varpi_2}, [ \theta^{k+1}_0( \mathbf{u} ) ]_2^\varpi }
    ,
    \ldots
    ,
    X_t^{ \#_{I^\varpi_{\#_\varpi}}, [ \theta^{k+1}_0( \mathbf{u} ) ]_{\#_\varpi}^\varpi }
    ,
\\&\!\!
    X^{0,u_0+u_{k+1}}_t-X^{0,u_0}_t
    -
    X_t^{ 1,( u_0, u_{k+1} ) }
    \big)
\\&\!\!+
    \sum^{\#_\varpi}_{i=1}
    G_l^{ ( \#_\varpi + 1 ) }(
      X_t^{ 0,u_0 } 
    )\big(
    X_t^{ \#_{I^\varpi_1}, [ \theta^{k+1}_0( \mathbf{u} ) ]_1^\varpi }
    ,
    X_t^{ \#_{I^\varpi_2}, [ \theta^{k+1}_0( \mathbf{u} ) ]_2^\varpi }
    ,
    \ldots
    ,
    X_t^{ \#_{I^\varpi_{i-1}}, [ \theta^{k+1}_0( \mathbf{u} ) ]_{ i-1 }^{ \varpi } }
    ,
\\&\!\!
    X_t^{ \#_{I^\varpi_i}, [ \theta^{k+1}_1( \mathbf{u} ) ]_i^\varpi }
    -
    X_t^{ \#_{I^\varpi_i}, [ \theta^{k+1}_0( \mathbf{u} ) ]_i^\varpi }
    ,
    X_t^{ \#_{I^\varpi_{i+1}}, [ \theta^{k+1}_1( \mathbf{u} ) ]_{ i+1 }^{ \varpi } }
    ,
    X_t^{ \#_{I^\varpi_{i+2}}, [ \theta^{k+1}_1( \mathbf{u} ) ]_{ i+2 }^{ \varpi } }
    ,
    \ldots
    ,
    X_t^{ \#_{I^\varpi_{\#_\varpi}}, [ \theta^{k+1}_1( \mathbf{u} ) ]_{\#_\varpi}^\varpi }
    ,
\\&\!\!
    X^{0,u_0+u_{k+1}}_t-X^{0,u_0}_t
    \big)
    \Bigg]
    .
\end{split}
\end{equation}
Furthermore, H\"{o}lder's inequality shows that for all 
$ l\in\{0,1\} $,
$ p\in[2,\infty) $, 
$ \varpi \in \Pi^*_k $, 
$ j \in \{1,2,\ldots,\#_\varpi\} $, 
$ m \in \{j+1,j+2,\ldots,\#_\varpi\} $, 
$
  \mathbf{u} 
  = ( u_0, u_1, \dots, u_{k+1} ) 
  \in \times^{k+1}_{i=0} H^{[i]}
$,
$ t \in (0,T] $
it holds that 
\begin{equation}
\begin{split}
&
  \frac{1}{
    \prod^{k+1}_{ i=1 }
    \|u_i\|_H
  } \,
  \big\|
    G^{ ( \#_\varpi ) }_l( X_t^{ 0,u_0 } )\big(
      X_t^{ \#_{I^\varpi_1}, [ \theta^{k+1}_0(\mathbf{u}) ]_1^{ \varpi } }
      ,
      X_t^{ \#_{I^\varpi_2}, [ \theta^{k+1}_0(\mathbf{u}) ]_2^{ \varpi } }
      ,
      \dots
      ,
      X_t^{ \#_{I^\varpi_{j-1}}, [ \theta^{k+1}_0(\mathbf{u}) ]_{j-1}^{ \varpi } }
      ,
\\&\quad
      X_t^{ \#_{I^\varpi_j}, [\theta^{k+1}_1(\mathbf{u})]^\varpi_j }
      -
      X_t^{ \#_{I^\varpi_j}, [ \theta^{k+1}_0(\mathbf{u}) ]_j^{ \varpi } }
      -
      X_t^{ \#_{I^\varpi_j}+1, ( [ \theta^{k+1}_0(\mathbf{u}) ]_j^{ \varpi }, u_{k+1} ) }
      ,
      X_t^{ \#_{I^\varpi_{j+1}}, [ \theta^{k+1}_1( \mathbf{u} ) ]_{j+1}^\varpi }
      ,
\\&\quad
      X_t^{ \#_{I^\varpi_{j+2}}, [ \theta^{k+1}_1( \mathbf{u} ) ]_{j+2}^\varpi }
      ,
      \dots
      ,
      X_t^{ \#_{I^\varpi_{\#_\varpi}}, [ \theta^{k+1}_1( \mathbf{u} ) ]_{ \#_\varpi }^\varpi }
    \big)
  \big\|_{ \lpn{p}{\P}{ V_{l,0} } }
\\&\leq
  |G_l|_{ \Cb{\#_\varpi}(H, V_{l,0}) } \,
  \Bigg[
    \prod^{j-1}_{ i=1 }
    \frac{
      \|X^{\#_{I^\varpi_i}, [\theta^{k+1}_0(\mathbf{u})]^\varpi_i}_t\|_{ \lpn{p \#_\varpi}{\P}{H} }
    }{
      \prod^{ \#_{ I^\varpi_i } }_{ q=1 }
      \| u_{ I^\varpi_{ i,q } } \|_H
    }
  \Bigg]
  \Bigg[
    \prod^{ \#_\varpi }_{ i=j+1 }
    \frac{
      \|X^{\#_{I^\varpi_i}, [\theta^{k+1}_1(\mathbf{u})]^\varpi_i}_t\|_{ \lpn{p \#_\varpi}{\P}{H} }
    }{
      \prod^{ \#_{ I^\varpi_i } }_{ q=1 }
      \| u_{ I^\varpi_{ i,q } } \|_H
    }
  \Bigg]
\\&\quad\cdot
    \frac{
      \|
      X_t^{ \#_{I^\varpi_j}, [\theta^{k+1}_1(\mathbf{u})]^\varpi_j }
      -
      X_t^{ \#_{I^\varpi_j}, [ \theta^{k+1}_0(\mathbf{u}) ]_j^{ \varpi } }
      -
      X_t^{ \#_{I^\varpi_j}+1, ( [ \theta^{k+1}_0(\mathbf{u}) ]_j^{ \varpi }, u_{k+1} ) }
      \|_{ \lpn{p \#_\varpi}{\P}{H} }
    }{
      \|u_{k+1}\|_H
      \prod^{ \#_{ I^\varpi_j } }_{ q=1 }
      \| u_{ I^\varpi_{ j,q } } \|_H
    }
\\&\leq
    |G_l|_{ \Cb{\#_\varpi}( H,V_{l,0} ) } \,
      L^{\mathbf{0}_k}_{ \varpi\setminus\{I^\varpi_j\}, p\,\#_\varpi }
      \,
     \tilde{d}_{
       \#_{I^\varpi_j} + 1,
       p\,\#_\varpi
     }( u_0, u_{k+1} ) \,
      {\smallprod_{ I \in \varpi \setminus \{I^\varpi_j\} }}
      t^{ -\iota^{\mathbf{0}_k}_I }
\\&\leq
    |T \vee 1|^{ \lfloor k/2 \rfloor \min\{1-\alpha,\nicefrac{1}{2}-\beta\} } \,
    |G_l|_{ \Cb{\#_\varpi}( H,V_{l,0} ) } \,
      L^{\mathbf{0}_k}_{ \varpi\setminus\{I^\varpi_j\}, p\,\#_\varpi }
      \,
     \tilde{d}_{
       \#_{I^\varpi_j} + 1,
       p\,\#_\varpi
     }( u_0, u_{k+1} )
\end{split}
\end{equation}
and 
\begin{equation}
\begin{split}
&
  \frac{1}{
    \prod^{k+1}_{ i=1 }
    \|u_i\|_H
  } \,
  \big\|
    G^{ ( \#_\varpi ) }_l( X_t^{ 0, u_0 } )
    \big(
      X_t^{ \#_{I^\varpi_1}, [ \theta^{k+1}_0(\mathbf{u}) ]_1^{ \varpi } }
      ,
      X_t^{ \#_{I^\varpi_2}, [ \theta^{k+1}_0(\mathbf{u}) ]_2^{ \varpi } }
      ,
      \dots
      ,
      X_t^{ \#_{I^\varpi_{j-1}}, [ \theta^{k+1}_0(\mathbf{u}) ]_{j-1}^{ \varpi } }
      ,
\\&\quad
      X_t^{ \#_{I^\varpi_j}+1, ( [ \theta^{k+1}_0(\mathbf{u}) ]_j^{ \varpi }, u_{k+1} ) }
      ,
      X_t^{ \#_{I^\varpi_{j+1}}, [ \theta^{k+1}_0( \mathbf{u} ) ]_{j+1}^{ \varpi } }
      ,
      X_t^{ \#_{I^\varpi_{j+2}}, [ \theta^{k+1}_0( \mathbf{u} ) ]_{j+2}^{ \varpi } }
      ,
      \dots
      ,
      X_t^{ \#_{I^\varpi_{m-1}}, [ \theta^{k+1}_0( \mathbf{u} ) ]_{ m-1 }^{ \varpi } }
      ,
\\&\quad
      X_t^{ \#_{I^\varpi_m}, [ \theta^{k+1}_1( \mathbf{u} ) ]_m^{ \varpi } }
      -
      X_t^{ \#_{I^\varpi_m}, [ \theta^{k+1}_0( \mathbf{u} ) ]_m^{ \varpi } }
      ,
      X_t^{ \#_{I^\varpi_{m+1}}, [ \theta^{k+1}_1( \mathbf{u} ) ]_{ m+1 }^{ \varpi } }
      ,
      X_t^{ \#_{I^\varpi_{m+2}}, [ \theta^{k+1}_1( \mathbf{u} ) ]_{ m+2 }^{ \varpi } },
      \dots 
      ,
\\&\quad
      X_t^{ \#_{I^\varpi_{\#_\varpi}}, [ \theta^{k+1}_1( \mathbf{u} ) ]_{ \#_\varpi }^{ \varpi } } 
    \big) 
  \big\|_{ \lpn{p}{\P}{ V_{l,0} } }
\\&\leq
  |G_l|_{ \Cb{\#_\varpi}(H, V_{l,0}) } \,
  \Bigg[
    \prod_{ i \in \{1,2,\ldots,m-1\} \setminus \{j\} }
    \frac{
      \|X^{\#_{I^\varpi_i}, [\theta^{k+1}_0(\mathbf{u})]^\varpi_i}_t\|_{ \lpn{p \#_\varpi}{\P}{H} }
    }{
      \prod^{ \#_{ I^\varpi_i } }_{ q=1 }
      \| u_{ I^\varpi_{ i,q } } \|_H
    }
  \Bigg]
\\&\quad\cdot
  \Bigg[
    \prod^{\#_\varpi}_{ i=m+1 }
    \frac{
      \|X^{\#_{I^\varpi_i}, [\theta^{k+1}_1(\mathbf{u})]^\varpi_i}_t\|_{ \lpn{p \#_\varpi}{\P}{H} }
    }{
      \prod^{ \#_{ I^\varpi_i } }_{ q=1 }
      \| u_{ I^\varpi_{ i,q } } \|_H
    }
  \Bigg]
  \Bigg[
    \frac{
      \|X^{\#_{I^\varpi_j}+1, ([\theta^{k+1}_0(\mathbf{u})]^\varpi_j,u_{k+1})}_t\|_{ \lpn{p \#_\varpi}{\P}{H} }
    }{
      \|u_{k+1}\|_H
      \prod^{ \#_{ I^\varpi_j } }_{ q=1 }
      \| u_{ I^\varpi_{ j,q } } \|_H
    }
  \Bigg]
\\&\quad\cdot
    \frac{
      \|      X_t^{ \#_{I^\varpi_m}, [ \theta^{k+1}_1( \mathbf{u} ) ]_m^{ \varpi } }
            -
            X_t^{ \#_{I^\varpi_m}, [ \theta^{k+1}_0( \mathbf{u} ) ]_m^{ \varpi } }\|_{ \lpn{p \#_\varpi}{\P}{H} }
    }{
      \prod^{ \#_{ I^\varpi_m } }_{ q=1 }
      \| u_{ I^\varpi_{ m,q } } \|_H
    }
\\&\leq
  |G_l|_{ \Cb{\#_\varpi}(H, V_{l,0}) } \,
      L^{\mathbf{0}_{k+1}}_{ \{ I^\varpi_j \cup \{k+1\} \}, p\,\#_\varpi } \,
      L^{\mathbf{0}_k}_{ \varpi\setminus\{ I^\varpi_j, \, I^\varpi_m \}, p\,\#_\varpi } \,
      d_{ \#_{I^\varpi_m}, p\,\#_\varpi }( u_0, u_0+u_{k+1} )
\\&\quad\cdot
  t^{ -\iota^{\mathbf{0}_{k+1}}_{ I^\varpi_j \cup \{k+1\} } }
  \smallprod_{ I \in \varpi \setminus\{ I^\varpi_j, \, I^\varpi_m \} }
  t^{ -\iota^{\mathbf{0}_k}_I }
\\&\leq
    |T \vee 1|^{ \lfloor k/2 \rfloor \min\{1-\alpha,\nicefrac{1}{2}-\beta\} } \,
  |G_l|_{ \Cb{\#_\varpi}(H, V_{l,0}) } \,
      L^{\mathbf{0}_{k+1}}_{ \{ I^\varpi_j \cup \{k+1\} \}, p\,\#_\varpi } \,
      L^{\mathbf{0}_k}_{ \varpi\setminus\{ I^\varpi_j, \, I^\varpi_m \}, p\,\#_\varpi }
\\&\quad\cdot
      d_{ \#_{I^\varpi_m}, p\,\#_\varpi }( u_0, u_0+u_{k+1} )
      .
\end{split}
\end{equation}
In addition, H\"{o}lder's inequality also shows that for all 
$ l\in\{0,1\} $,
$ p\in[2,\infty) $, 
$ \varpi \in \Pi_k $, 
$
  \mathbf{u} 
  = ( u_0, u_1, \dots, u_{k+1} ) 
  \in \times^{k+1}_{i=0} H^{[i]}
$,
$ t \in (0,T] $
it holds that 
\begin{equation}
\begin{split}
&
  \frac{1}{
    \prod^{k+1}_{ i=1 }
    \|u_i\|_H
  } \,
  \bigg\|
    \int^1_0
    \big[
    G_l^{ ( \#_\varpi + 1 ) }\big(
      X_t^{ 0, u_0 } 
      + 
      \rho [X^{0,u_0+u_{k+1}}_t-X^{0,u_0}_t]
    \big)
    -
    G_l^{ ( \#_\varpi + 1 ) }\big(
      X_t^{ 0,u_0 } 
    \big)
    \big]
    \big(
      X_t^{ \#_{I^\varpi_1}, [ \theta^{k+1}_1( \mathbf{u} ) ]_1^\varpi }
      ,
\\&\quad
      X_t^{ \#_{I^\varpi_2}, [ \theta^{k+1}_1( \mathbf{u} ) ]_2^\varpi }
      ,
      \dots
      ,
      X_t^{ \#_{I^\varpi_{\#_\varpi}}, [ \theta^{k+1}_1( \mathbf{u} ) ]_{ \#_\varpi }^{ \varpi } }
      ,
      X^{0,u_0+u_{k+1}}_t-X^{0,u_0}_t
    \big)
    \, \diffns\rho
  \bigg\|_{ \lpn{p}{\P}{ V_{l,0} } }
\\&\leq
  \int^1_0
  \big\|
   G_l^{ ( \#_\varpi + 1 ) }\big(
      X_t^{ 0,u_0 } 
      + 
      \rho [X^{0,u_0+u_{k+1}}_t-X^{0,u_0}_t]
    \big)  
    -
    G_l^{ ( \#_\varpi + 1 ) }\big(
      X_t^{ 0,u_0 } 
    \big)
  \big\|_{ \lpn{ p(\#_\varpi+2) }{\P}{ L^{ (\#_\varpi+1) }( H, V_{l,0} ) } }
  \,\diffns{\rho}
\\&\quad\cdot
  \Bigg[
    \prod^{\#_\varpi}_{ i=1 }
    \frac{
      \|X^{\#_{I^\varpi_i}, [\theta^{k+1}_1(\mathbf{u})]^\varpi_i}_t\|_{ \lpn{p (\#_\varpi+2)}{\P}{H} }
    }{
      \prod^{ \#_{ I^\varpi_i } }_{ q=1 }
      \| u_{ I^\varpi_{ i,q } } \|_H
    }
  \Bigg]
    \frac{
      \|X^{0,u_0+u_{k+1}}_t-X^{0,u_0}_t\|_{ \lpn{p (\#_\varpi+2)}{\P}{H} }
    }{
      \|u_{k+1}\|_H
    }
\\&\leq
  \int^1_0
  \big\|
   G_l^{ ( \#_\varpi + 1 ) }\big(
      X_t^{ 0,u_0 } 
      + 
      \rho [X^{0,u_0+u_{k+1}}_t-X^{0,u_0}_t]
    \big)  
    -
    G_l^{ ( \#_\varpi + 1 ) }\big(
      X_t^{ 0,u_0 } 
    \big)
  \big\|_{ \lpn{ p(\#_\varpi+2) }{\P}{ L^{ (\#_\varpi+1) }( H, V_{l,0} ) } }
  \,\diffns{\rho}
\\&\quad\cdot
  L^{\mathbf{0}_k}_{\varpi,p(\#_\varpi+2)} \,
  \tilde{L}_{ p(\#_\varpi+2) }
  \smallprod_{ I \in \varpi }
  t^{-\iota^{\mathbf{0}_k}_I}
\\&\leq
  \int^1_0
  \big\|
   G_l^{ ( \#_\varpi + 1 ) }\big(
      X_t^{ 0,u_0 } 
      + 
      \rho [X^{0,u_0+u_{k+1}}_t-X^{0,u_0}_t]
    \big)  
    -
    G_l^{ ( \#_\varpi + 1 ) }\big(
      X_t^{ 0,u_0 } 
    \big)
  \big\|_{ \lpn{ p(\#_\varpi+2) }{\P}{ L^{ (\#_\varpi+1) }( H, V_{l,0} ) } }
  \,\diffns{\rho}
\\&\quad\cdot
  | T \vee 1 |^{ \lfloor k/2 \rfloor \min\{1-\alpha,\nicefrac{1}{2}-\beta\} } \,
  L^{\mathbf{0}_k}_{\varpi,p(\#_\varpi+2)} \,
  \tilde{L}_{ p(\#_\varpi+2) }.
\end{split}
\end{equation}
Again H\"{o}lder's inequality assures that for all 
$ l\in\{0,1\} $,
$ p\in[2,\infty) $, 
$ \varpi \in \Pi_k $, 
$ j \in \{1,2,\ldots,\#_\varpi\} $, 
$
  \mathbf{u} 
  = ( u_0, u_1, \dots, u_{k+1} ) 
  \in \times^{k+1}_{i=0} H^{[i]}
$,
$ t \in (0,T] $ 
it holds that 
\begin{equation}
\begin{split}
&
  \frac{1}{
    \prod^{k+1}_{ i=1 }
    \|u_i\|_H
  } \,
  \big\|
    G_l^{ ( \#_\varpi + 1 ) }(
      X_t^{ 0, u_0 } 
    )\big(
    X_t^{ \#_{I^\varpi_1}, [ \theta^{k+1}_0( \mathbf{u} ) ]_1^\varpi }
    ,
    X_t^{ \#_{I^\varpi_2}, [ \theta^{k+1}_0( \mathbf{u} ) ]_2^\varpi }
    ,
    \ldots
    ,
\\&\quad
    X_t^{ \#_{I^\varpi_{\#_\varpi}}, [ \theta^{k+1}_0( \mathbf{u} ) ]_{\#_\varpi}^\varpi }
    ,
    X^{0,u_0+u_{k+1}}_t-X^{0,u_0}_t
    -
    X_t^{ 1,( u_0, u_{k+1} ) }
    \big)
  \big\|_{ \lpn{p}{\P}{ V_{l,0} } }
\\&\leq
  |G_l|_{ \Cb{\#_\varpi+1}( H, V_{l,0} ) } \,
  \Bigg[
    \prod^{\#_\varpi}_{ i=1 }
    \frac{
      \|X^{\#_{I^\varpi_i}, [\theta^{k+1}_0(\mathbf{u})]^\varpi_i}_t\|_{ \lpn{p (\#_\varpi+1)}{\P}{H} }
    }{
      \prod^{ \#_{ I^\varpi_i } }_{ q=1 }
      \| u_{ I^\varpi_{ i,q } } \|_H
    }
  \Bigg]
\\&\quad\cdot
    \frac{
      \|
    X^{0,u_0+u_{k+1}}_t-X^{0,u_0}_t
    -
    X_t^{ 1,( u_0, u_{k+1} ) }
      \|_{ \lpn{p (\#_\varpi+1)}{\P}{H} }
    }{
      \|u_{k+1}\|_H
    }
\\&\leq
  | T \vee 1 |^{ \lfloor k/2 \rfloor \min\{1-\alpha,\nicefrac{1}{2}-\beta\} } \,
  |G_l|_{ \Cb{ \#_\varpi+1 }( H, V_{l,0} ) } \,
    L^{\mathbf{0}_k}_{\varpi,p(\#_\varpi+1)}
    \,
     \tilde{d}_{ 1,p(\#_\varpi+1) }( u_0, u_{k+1} )
\end{split}
\end{equation}
and 
\begin{equation}
\label{eq:explain.end}
\begin{split}
&
  \frac{1}{
    \prod^{k+1}_{ i=1 }
    \|u_i\|_H
  } \,
  \big\|
    G_l^{ ( \#_\varpi + 1 ) }(
      X_t^{ 0,u_0 } 
    )\big(
    X_t^{ \#_{I^\varpi_1}, [ \theta^{k+1}_0( \mathbf{u} ) ]_1^\varpi }
    ,
    X_t^{ \#_{I^\varpi_2}, [ \theta^{k+1}_0( \mathbf{u} ) ]_2^\varpi }
    ,
    \ldots
    ,
    X_t^{ \#_{I^\varpi_{j-1}}, [ \theta^{k+1}_0( \mathbf{u} ) ]_{ j-1 }^{ \varpi } }
    ,
\\&\quad
    X_t^{ \#_{I^\varpi_j}, [ \theta^{k+1}_1( \mathbf{u} ) ]_j^\varpi }
    -
    X_t^{ \#_{I^\varpi_j}, [ \theta^{k+1}_0( \mathbf{u} ) ]_j^\varpi }
    ,
    X_t^{ \#_{I^\varpi_{j+1}}, [ \theta^{k+1}_1( \mathbf{u} ) ]_{ j+1 }^{ \varpi } }
    ,
    X_t^{ \#_{I^\varpi_{j+2}}, [ \theta^{k+1}_1( \mathbf{u} ) ]_{ j+2 }^{ \varpi } }
    ,
    \ldots
    ,
\\&\quad
    X_t^{ \#_{I^\varpi_{\#_\varpi}}, [ \theta^{k+1}_1( \mathbf{u} ) ]_{\#_\varpi}^\varpi }
    ,
    X^{0,u_0+u_{k+1}}_t-X^{0,u_0}_t
    \big)
  \big\|_{ \lpn{p}{\P}{ V_{l,0} } }
\\&\leq
  |G_l|_{ \Cb{\#_\varpi+1}( H, V_{l,0} ) } \,
  \Bigg[
    \prod^{j-1}_{ i=1 }
    \frac{
      \|X^{\#_{I^\varpi_i}, [\theta^{k+1}_0(\mathbf{u})]^\varpi_i}_t\|_{ \lpn{p (\#_\varpi+1)}{\P}{H} }
    }{
      \prod^{ \#_{ I^\varpi_i } }_{ q=1 }
      \| u_{ I^\varpi_{ i,q } } \|_H
    }
  \Bigg]
  \Bigg[
    \prod^{\#_\varpi}_{ i=j+1 }
    \frac{
      \|X^{\#_{I^\varpi_i}, [\theta^{k+1}_1(\mathbf{u})]^\varpi_i}_t\|_{ \lpn{p (\#_\varpi+1)}{\P}{H} }
    }{
      \prod^{ \#_{ I^\varpi_i } }_{ q=1 }
      \| u_{ I^\varpi_{ i,q } } \|_H
    }
  \Bigg]
\\&\quad\cdot
  \Bigg[
    \frac{
      \|    X_t^{ \#_{I^\varpi_j}, [ \theta^{k+1}_1( \mathbf{u} ) ]_j^\varpi }
          -
          X_t^{ \#_{I^\varpi_j}, [ \theta^{k+1}_0( \mathbf{u} ) ]_j^\varpi }\|_{ \lpn{p (\#_\varpi+1)}{\P}{H} }
    }{
      \prod^{ \#_{ I^\varpi_j } }_{ q=1 }
      \| u_{ I^\varpi_{ j,q } } \|_H
    }
  \Bigg]
    \frac{
      \|
    X^{0,u_0+u_{k+1}}_t-X^{0,u_0}_t
      \|_{ \lpn{p (\#_\varpi+1)}{\P}{H} }
    }{
      \|u_{k+1}\|_H
    }
\\&\leq
  | T \vee 1 |^{ \lfloor k/2 \rfloor \min\{1-\alpha,\nicefrac{1}{2}-\beta\} } \,
  |G_l|_{ \Cb{\#_\varpi+1}( H, V_{l,0} ) } \,
    L^{\mathbf{0}_k}_{\varpi\setminus\{I^\varpi_j\}, p(\#_\varpi+1)} \,
    \tilde{L}_{ p(\#_\varpi+1) }\,
\\&\quad\cdot
      d_{ \#_{I^\varpi_j}, p(\#_\varpi+1) }( u_0, u_0+u_{k+1} ).
\end{split}
\end{equation}
Combining~\eqref{eq:explain.start}--\eqref{eq:explain.end} yields that for all 
$ l\in\{0,1\} $, 
$ p\in[2,\infty) $, 
$
  \mathbf{u} 
  = ( u_0, u_1, \dots, u_{k+1} ) 
  \in \times^{k+1}_{i=0} H^{[i]}
$,
$ t \in (0,T] $
it holds that 
\begin{equation}
\begin{split}
&
  \frac{
    \|
    \mathbf{\bar{G}}^{\mathbf{u}}_{ k+1,l }( t, X^{k,\theta^{k+1}_1(\mathbf{u})}_t - X^{k,\theta^{k+1}_0(\mathbf{u})}_t )
    -
    \mathbf{G}^{\mathbf{u}}_{ k+1,l }( t, X^{k,\theta^{k+1}_1(\mathbf{u})}_t - X^{k,\theta^{k+1}_0(\mathbf{u})}_t )
    \|_{ \lpn{p}{\P}{V_{l,0}} }
  }{
    \prod^{k+1}_{i=1}
    \|u_i\|_H
  }
\leq
  | T \vee 1 |^k \,
\\&\cdot
    \Bigg(
    \sum_{ \varpi\in\Pi^*_k }
    |G_l|_{ \Cb{\#_\varpi}( H,V_{l,0} ) }
    \sum_{ I\in\varpi }
    \Bigg[
      L^{\mathbf{0}_k}_{ \varpi\setminus\{I\}, p\,\#_\varpi }
      \,
     \tilde{d}_{
       \#_I + 1,
       p\,\#_\varpi
     }( u_0, u_{k+1} )
\\&+
      L^{\mathbf{0}_{k+1}}_{ \{ I \cup \{k+1\} \}, p\,\#_\varpi }
      \sum_{\substack{ J\in\varpi\colon\min(J) > \min(I) }}
      L^{\mathbf{0}_k}_{ \varpi\setminus\{ I,J \}, p\,\#_\varpi } \,
      d_{ \#_J, p\,\#_\varpi }( u_0, u_0+u_{k+1} )
    \Bigg]
\\&+
  \sum_{\varpi\in\Pi_k}
  \Bigg[
  L^{\mathbf{0}_k}_{\varpi,p(\#_\varpi+2)} \,
  \tilde{L}_{ p(\#_\varpi+2) }
\\&\cdot
  \int^1_0
  \big\|
   G_l^{ ( \#_\varpi + 1 ) }\big(
      X_t^{ 0,u_0 } 
      + 
      \rho [X^{0,u_0+u_{k+1}}_t-X^{0,u_0}_t]
    \big)  
    -
    G_l^{ ( \#_\varpi + 1 ) }\big(
      X_t^{ 0,u_0 } 
    \big)
  \big\|_{ \lpn{ p(\#_\varpi+2) }{\P}{ L^{ (\#_\varpi+1) }( H, V_{l,0} ) } }
  \,\diffns{\rho}
\\&+
  |G_l|_{ \Cb{ \#_\varpi+1 }( H, V_{l,0} ) }
  \Bigg(
    L^{\mathbf{0}_k}_{\varpi,p(\#_\varpi+1)}
    \,
     \tilde{d}_{ 1,p(\#_\varpi+1) }( u_0, u_{k+1} )
\\&+
    \sum_{ I\in\varpi }
    L^{\mathbf{0}_k}_{\varpi\setminus\{I\}, p(\#_\varpi+1)} \,
    \tilde{L}_{ p(\#_\varpi+1) }\,
    \,
      d_{ \#_I, p(\#_\varpi+1) }( u_0, u_0+u_{k+1} )
  \Bigg)
  \Bigg]
  \Bigg)
  .
\end{split}
\end{equation}
This and Minkowski's inequality imply that for all 
$ l\in\{0,1\} $, 
$ p\in[2,\infty) $, 
$
  \mathbf{u} 
  = ( u_0, u_1, \dots, u_{k+1} ) 
  \in \times^{k+1}_{i=0} H^{[i]}
$
it holds that 
\begin{equation}
\begin{split}
&
  \left[
  \int^T_0
  \left(
  \frac{
    \|
    \mathbf{\bar{G}}^{\mathbf{u}}_{ k+1,l }( t, X^{k,\theta^{k+1}_1(\mathbf{u})}_t - X^{k,\theta^{k+1}_0(\mathbf{u})}_t )
    -
    \mathbf{G}^{\mathbf{u}}_{ k+1,l }( t, X^{k,\theta^{k+1}_1(\mathbf{u})}_t - X^{k,\theta^{k+1}_0(\mathbf{u})}_t )
    \|_{ \lpn{p}{\P}{V_{l,0}} }
  }{
    \prod^{k+1}_{i=1}
    \|u_i\|_H
  }
  \right)^{\!(l+1)}
  dt
  \right]^{\!\nicefrac{1}{(l+1)}}
\\&\leq
  | T \vee 1 |^k \,
  \Bigg[
  \int^T_0
    \Bigg(
    \sum_{ \varpi\in\Pi^*_k }
    |G_l|_{ \Cb{\#_\varpi}( H,V_{l,0} ) }
    \sum_{ I\in\varpi }
    \Bigg[
      L^{\mathbf{0}_k}_{ \varpi\setminus\{I\}, p\,\#_\varpi }
      \,
     \tilde{d}_{
       \#_I + 1,
       p\,\#_\varpi
     }( u_0, u_{k+1} )
\\&\quad+
      L^{\mathbf{0}_{k+1}}_{ \{ I \cup \{k+1\} \}, p\,\#_\varpi }
      \sum_{\substack{ J\in\varpi\colon\min(J) > \min(I) }}
      L^{\mathbf{0}_k}_{ \varpi\setminus\{ I,J \}, p\,\#_\varpi } \,
      d_{ \#_J, p\,\#_\varpi }( u_0, u_0+u_{k+1} )
    \Bigg]
\\&\quad+
  \sum_{\varpi\in\Pi_k}
  \Bigg[
  L^{\mathbf{0}_k}_{\varpi,p(\#_\varpi+2)} \,
  \tilde{L}_{ p(\#_\varpi+2) }
\\&\quad\cdot
  {\int^1_0}
  \big\|
   G_l^{ ( \#_\varpi + 1 ) }\big(
      X_t^{ 0,u_0 } 
      + 
      \rho [X^{0,u_0+u_{k+1}}_t-X^{0,u_0}_t]
    \big)  
    -
    G_l^{ ( \#_\varpi + 1 ) }\big(
      X_t^{ 0,u_0 } 
    \big)
  \big\|_{ \lpn{ p(\#_\varpi+2) }{\P}{ L^{ (\#_\varpi+1) }( H, V_{l,0} ) } }
  \,\diffns{\rho}
\\&\quad+
  |G_l|_{ \Cb{ \#_\varpi+1 }( H, V_{l,0} ) }
  \Bigg(
    L^{\mathbf{0}_k}_{\varpi,p(\#_\varpi+1)}
    \,
     \tilde{d}_{ 1,p(\#_\varpi+1) }( u_0, u_{k+1} )
\\&\quad+
    \sum_{ I\in\varpi }
    L^{\mathbf{0}_k}_{\varpi\setminus\{I\}, p(\#_\varpi+1)} \,
    \tilde{L}_{ p(\#_\varpi+1) }\,
    \,
      d_{ \#_I, p(\#_\varpi+1) }( u_0, u_0+u_{k+1} )
  \Bigg)
  \Bigg]
  \Bigg)^{(l+1)}
  \, dt
  \Bigg]^{\nicefrac{1}{(l+1)}}
\\&\leq
  | T \vee 1 |^k \,
  \Bigg\{
  \Bigg[
  \int^T_0
    \Bigg(
    \sum_{ \varpi\in\Pi^*_k }
    |G_l|_{ \Cb{\#_\varpi}( H,V_{l,0} ) }
    \sum_{ I\in\varpi }
    \Bigg[
      L^{\mathbf{0}_k}_{ \varpi\setminus\{I\}, p\,\#_\varpi }
      \,
     \tilde{d}_{
       \#_I + 1,
       p\,\#_\varpi
     }( u_0, u_{k+1} )
\\&\quad+
      L^{\mathbf{0}_{k+1}}_{ \{ I \cup \{k+1\} \}, p\,\#_\varpi }
      \sum_{\substack{ J\in\varpi\colon\min(J) > \min(I) }}
      L^{\mathbf{0}_k}_{ \varpi\setminus\{ I,J \}, p\,\#_\varpi } \,
      d_{ \#_J, p\,\#_\varpi }( u_0, u_0+u_{k+1} )
    \Bigg]\Bigg)^{(l+1)}
    \, dt
    \Bigg]^{\nicefrac{1}{(l+1)}}
\\&\quad+
  {\sum_{\varpi\in\Pi_k}}
  \Bigg\{
  \bigg[
  \int^T_0
  \bigg(
  L^{\mathbf{0}_k}_{\varpi,p(\#_\varpi+2)} \,
  \tilde{L}_{ p(\#_\varpi+2) }
  {\int^1_0}
  \big\|
   G_l^{ ( \#_\varpi + 1 ) }\big(
      X_t^{ 0,u_0 } 
      + 
      \rho [X^{0,u_0+u_{k+1}}_t-X^{0,u_0}_t]
    \big)  
\\&\quad
    -
    G_l^{ ( \#_\varpi + 1 ) }\big(
      X_t^{ 0,u_0 } 
    \big)
  \big\|_{ \lpn{ p(\#_\varpi+2) }{\P}{ L^{ (\#_\varpi+1) }( H, V_{l,0} ) } }
  \,\diffns{\rho}
  \bigg)^{(l+1)}
  \, dt
  \bigg]^{\nicefrac{1}{(l+1)}}
\\&\quad+
  \Bigg[
  \int^T_0
  \Bigg(
  |G_l|_{ \Cb{ \#_\varpi+1 }( H, V_{l,0} ) }
  \Bigg(
    L^{\mathbf{0}_k}_{\varpi,p(\#_\varpi+1)}
    \,
     \tilde{d}_{ 1,p(\#_\varpi+1) }( u_0, u_{k+1} )
\\&\quad+
    \sum_{ I\in\varpi }
    L^{\mathbf{0}_k}_{\varpi\setminus\{I\}, p(\#_\varpi+1)} \,
    \tilde{L}_{ p(\#_\varpi+1) }\,
    \,
      d_{ \#_I, p(\#_\varpi+1) }( u_0, u_0+u_{k+1} )
  \Bigg)
  \Bigg)^{(l+1)}
  \, dt
  \Bigg]^{\nicefrac{1}{(l+1)}}
  \Bigg\}
  \Bigg\}
  .
\end{split}
\end{equation}
Jensen's inequality hence shows that for all 
$ l\in\{0,1\} $, 
$ p\in[2,\infty) $, 
$
  \mathbf{u} 
  = ( u_0, u_1, \dots, u_{k+1} ) 
  \in \times^{k+1}_{i=0} H^{[i]}
$
it holds that 
\begin{equation}
\label{eq:kth.derivative.nonlinear.difference}
\begin{split}
&
  \left[
  \int^T_0
  \left(
  \frac{
    \|
    \mathbf{\bar{G}}^{\mathbf{u}}_{ k+1,l }( t, X^{k,\theta^{k+1}_1(\mathbf{u})}_t - X^{k,\theta^{k+1}_0(\mathbf{u})}_t )
    -
    \mathbf{G}^{\mathbf{u}}_{ k+1,l }( t, X^{k,\theta^{k+1}_1(\mathbf{u})}_t - X^{k,\theta^{k+1}_0(\mathbf{u})}_t )
    \|_{ \lpn{p}{\P}{V_{l,0}} }
  }{
    \prod^{k+1}_{i=1}
    \|u_i\|_H
  }
  \right)^{\!(l+1)}
  dt
  \right]^{\!\nicefrac{1}{(l+1)}}
\\&\leq
  | T \vee 1 |^k \,
  \Bigg\{
    \sum_{ \varpi\in\Pi^*_k }
    T^{\nicefrac{1}{(l+1)}} \,
    |G_l|_{ \Cb{\#_\varpi}( H,V_{l,0} ) }
    \sum_{ I\in\varpi }
    \Bigg[
      L^{\mathbf{0}_k}_{ \varpi\setminus\{I\}, p\,\#_\varpi }
      \,
     \tilde{d}_{
       \#_I + 1,
       p\,\#_\varpi
     }( u_0, u_{k+1} )
\\&\quad+
      L^{\mathbf{0}_{k+1}}_{ \{ I \cup \{k+1\} \}, p\,\#_\varpi }
      \sum_{\substack{ J\in\varpi\colon\min(J) > \min(I) }}
      L^{\mathbf{0}_k}_{ \varpi\setminus\{ I,J \}, p\,\#_\varpi } \,
      d_{ \#_J, p\,\#_\varpi }( u_0, u_0+u_{k+1} )
    \Bigg]
\\&\quad+
  {\sum_{\varpi\in\Pi_k}}
  \Bigg\{
  L^{\mathbf{0}_k}_{\varpi,p(\#_\varpi+2)} \,
  \tilde{L}_{ p(\#_\varpi+2) }
  \bigg[
  \int^T_0
  \bigg(
  {\int^1_0}
  \big\|
   G_l^{ ( \#_\varpi + 1 ) }\big(
      X_t^{ 0,u_0 } 
      + 
      \rho [X^{0,u_0+u_{k+1}}_t-X^{0,u_0}_t]
    \big)  
\\&\quad-
    G_l^{ ( \#_\varpi + 1 ) }\big(
      X_t^{ 0,u_0 } 
    \big)
  \big\|_{ \lpn{ p(\#_\varpi+2) }{\P}{ L^{ (\#_\varpi+1) }( H, V_{l,0} ) } }
  \,\diffns{\rho}
  \bigg)^{(l+1)}
  \, dt
  \bigg]^{\nicefrac{1}{(l+1)}}
\\&\quad+
  T^{\nicefrac{1}{(l+1)}} \,
  |G_l|_{ \Cb{ \#_\varpi+1 }( H, V_{l,0} ) }
  \Bigg(
    L^{\mathbf{0}_k}_{\varpi,p(\#_\varpi+1)}
    \,
     \tilde{d}_{ 1,p(\#_\varpi+1) }( u_0, u_{k+1} )
\\&\quad+
    \sum_{ I\in\varpi }
    L^{\mathbf{0}_k}_{\varpi\setminus\{I\}, p(\#_\varpi+1)} \,
    \tilde{L}_{ p(\#_\varpi+1) }\,
    \,
      d_{ \#_I, p(\#_\varpi+1) }( u_0, u_0+u_{k+1} )
  \Bigg)
  \Bigg\}
  \Bigg\}
\\&\leq
  | T \vee 1 |^k \,
  \Bigg\{
    \sum_{ \varpi\in\Pi^*_k }
    T^{\nicefrac{1}{(l+1)}} \,
    |G_l|_{ \Cb{\#_\varpi}( H,V_{l,0} ) }
    \sum_{ I\in\varpi }
    \Bigg[
      L^{\mathbf{0}_k}_{ \varpi\setminus\{I\}, p\,\#_\varpi }
      \,
     \tilde{d}_{
       \#_I + 1,
       p\,\#_\varpi
     }( u_0, u_{k+1} )
\\&\quad+
      L^{\mathbf{0}_{k+1}}_{ \{ I \cup \{k+1\} \}, p\,\#_\varpi }
      \sum_{\substack{ J\in\varpi\colon\min(J) > \min(I) }}
      L^{\mathbf{0}_k}_{ \varpi\setminus\{ I,J \}, p\,\#_\varpi } \,
      d_{ \#_J, p\,\#_\varpi }( u_0, u_0+u_{k+1} )
    \Bigg]
\\&\quad+
  {\sum_{\varpi\in\Pi_k}}
  \Bigg\{
  L^{\mathbf{0}_k}_{\varpi,p(\#_\varpi+2)} \,
  \tilde{L}_{ p(\#_\varpi+2) }
  \bigg[
  \int^T_0
  {\int^1_0}
  \big\|
   G_l^{ ( \#_\varpi + 1 ) }\big(
      X_t^{ 0,u_0 } 
      + 
      \rho [X^{0,u_0+u_{k+1}}_t-X^{0,u_0}_t]
    \big)  
\\&\quad-
    G_l^{ ( \#_\varpi + 1 ) }\big(
      X_t^{ 0,u_0 } 
    \big)
  \big\|^{(l+1)}_{ \lpn{ p(\#_\varpi+2) }{\P}{ L^{ (\#_\varpi+1) }( H, V_{l,0} ) } }
  \,\diffns{\rho}
  \, dt
  \bigg]^{\nicefrac{1}{(l+1)}}
\\&\quad+
  T^{\nicefrac{1}{(l+1)}} \,
  |G_l|_{ \Cb{ \#_\varpi+1 }( H, V_{l,0} ) }
  \Bigg(
    L^{\mathbf{0}_k}_{\varpi,p(\#_\varpi+1)}
    \,
     \tilde{d}_{ 1,p(\#_\varpi+1) }( u_0, u_{k+1} )
\\&\quad+
    \sum_{ I\in\varpi }
    L^{\mathbf{0}_k}_{\varpi\setminus\{I\}, p(\#_\varpi+1)} \,
    \tilde{L}_{ p(\#_\varpi+1) }\,
    \,
      d_{ \#_I, p(\#_\varpi+1) }( u_0, u_0+u_{k+1} )
  \Bigg)
  \Bigg\}
  \Bigg\}
  .
\end{split}
\end{equation}
Combining~\eqref{calc2} with~\eqref{eq:kth.derivative.nonlinear.difference}
ensures that for all 
$ p\in[2,\infty) $, 
$
  x \in H
$, 
$ u_{k+1} \in \nzspace{H} $
it holds that 
\begin{equation}
\label{eq:kth.derivative.convergence}
\begin{split}
&
  \sup_{
  \mathbf{u}=
  ( u_1, u_2, \dots, u_k ) 
  \in (\nzspace{H})^k
  }
  \sup_{t\in[0,T]}
  \frac{
    \big\|
      X^{k,(x+u_{k+1},\mathbf{u})}_t - X^{k,(x,\mathbf{u})}_t
      - X_t^{ k+1,( x, \mathbf{u}, u_{k+1} ) }
    \big\|_{
      \mathcal{L}^p( \P ; H )
    }
  }{
    \prod^{k+1}_{i=1}
    \|u_i\|_H
  }
\\&\leq
  |T\vee1|^k \,
  \chi^{ 0, T }_{ A, \eta }\,
  \Theta_{A,\eta,p,T}^{0,0,0}
  \big(
    |F|_{\Cb{1}(H,H)},
    |B|_{\Cb{1}(H,HS(U,H))}
  \big)
\\&\quad\cdot
  \Bigg\{
    \sum_{ \varpi \in \Pi^*_k }
      \Big[
        T \, |F|_{  \Cb{ \#_\varpi }( H, H )  }\,
+
        \sqrt{
          \tfrac{ p \, (p-1) }{2} \,
          T
        } \,
        |B|_{  \Cb{ \#_\varpi }( H, HS( U, H ) )  }\,       
      \Big]
\\&\quad\cdot
    \sum_{ I\in\varpi }
    \Bigg[
      L^{\mathbf{0}_k}_{ \varpi\setminus\{I\}, p\,\#_\varpi }
      \,
     \tilde{d}_{ \#_I + 1, p\,\#_\varpi }( x, u_{k+1} )
\\&\quad+
      L^{\mathbf{0}_{k+1}}_{ \{ I \cup \{k+1\} \}, p\,\#_\varpi }
      \sum_{\substack{ J\in\varpi\colon\min(J) > \min(I) }}
      L^{\mathbf{0}_k}_{ \varpi\setminus\{ I,J \}, p\,\#_\varpi } \,
      d_{\#_J,p\,\#_\varpi}( x, x+u_{k+1} )
    \Bigg]
\\&\quad+
  \sum_{\varpi\in\Pi_k}
  \Bigg[
  L^{\mathbf{0}_k}_{\varpi, p(\#_\varpi+2) }\,
  \tilde{L}_{ p(\#_\varpi+2) } \,
  \bigg(
  \int^T_0
  \int^1_0
    \|
      F^{ ( \#_\varpi + 1 ) }(
        X_s^{0,x} 
        + 
        \rho [ X^{0,x+u_{k+1}}_s - X^{0,x}_s ]
      )
\\&\quad-
      F^{ ( \#_\varpi + 1 ) }(
        X_s^{0,x} 
      )
    \|_{ \lpn{ p(\#_\varpi+2) }{\P}{ L^{ (\#_\varpi+1) }( H, H ) } }
  \,\diffns{\rho}
  \,\diffns{s}
\\&\quad+
  \bigg[
  \tfrac{ p\,(p-1) }{ 2 }
  \int^T_0
  \int^1_0
    \|
      B^{ ( \#_\varpi + 1 ) }(
        X_s^{0,x} 
        + 
        \rho [ X^{0,x+u_{k+1}}_s - X^{0,x}_s ]
      )
\\&\quad-
      B^{ ( \#_\varpi + 1 ) }(
        X_s^{0,x}
      )
    \|^2_{ \lpn{ p(\#_\varpi+2) }{\P}{ L^{ (\#_\varpi+1) }( H, HS(U,H) ) } }
  \,\diffns{\rho}
  \,\diffns{s}
  \bigg]^{\nicefrac{1}{2}}
  \bigg)
\\&\quad+
      \Big[
        T \, |F|_{  \Cb{ \#_\varpi+1 }( H, H )  }\,
+
       \sqrt{
          \tfrac{ p \, (p-1) }{2} \,
          T
        } \
        |B|_{  \Cb{ \#_\varpi+1 }( H, HS( U, H ) )  }\,
      \Big]
\\&\quad\cdot
  \Bigg(
    L^{\mathbf{0}_k}_{\varpi,p(\#_\varpi+1)}
    \,
     \tilde{d}_{ 1,p(\#_\varpi+1) }( x, u_{k+1} )
\\&\quad+
    \sum_{ I\in\varpi }
    L^{\mathbf{0}_k}_{\varpi\setminus\{I\},p(\#_\varpi+1)} \,
    \tilde{L}_{ p(\#_\varpi+1) } \,
      d_{\#_I,p(\#_\varpi+1)}( x, x+u_{k+1} )
  \Bigg)
  \Bigg]
  \Bigg\}
  .
\end{split}
\end{equation}
This, \eqref{eq:1st.derivative.integral.limit}, and~\eqref{eq:distance.convergence} 
establish item~\eqref{item:lem_derivative:frechet} in the case $k+1$. 
Induction thus completes the proof of item~\eqref{item:lem_derivative:frechet}. 

Combining item~\eqref{item:thm_derivative}, 
item~\eqref{item:thm_derivative.continuous}, and item~\eqref{item:lem_derivative:frechet}
with item~\eqref{item:lem_derivative:a_priori} establishes 
item~\eqref{item:lem_derivative:smoothness} 
and item~\eqref{item:lem_derivative:representation}. 
Next we note that~\eqref{eq:k.linear} and item~\eqref{item:lem_derivative:a_priori} ensure that for all 
$ k \in \{1,2,\ldots,n\} $, 
$ p \in [2,\infty) $, 
$ x \in H $, 
$ t \in [0,T] $ 
it holds that 
\begin{equation}
\label{eq:time.derivative.linearity}
  \big(
    H^k \ni \mathbf{u} \mapsto
    [X^{k,(x,\mathbf{u})}_t]_{\P,\mathcal{B}(H)}
    \in \lpnb{p}{\P}{H}
  \big) \in L^{(k)}(H,\lpnb{p}{\P}{H}).
\end{equation}
In addition, item~\eqref{item:thm_derivative.continuous} ensures that for all 
$ k \in \{1,2,\ldots,n\} $, 
$ p \in [2,\infty) $, 
$ t \in [0,T] $
it holds that 
\begin{multline}
\label{eq:time.derivative.continuity}
  \big(
  H \ni x \mapsto
  \big[
    H^k \ni \mathbf{u} \mapsto
    [X^{k,(x,\mathbf{u})}_t]_{\P,\mathcal{B}(H)}
    \in \lpnb{p}{\P}{H}
  \big] \in L^{(k)}(H,\lpnb{p}{\P}{H})
  \big) 
\\
  \in \mathcal{C}(H,L^{(k)}(H,\lpnb{p}{\P}{H})).
\end{multline}
Combining~\eqref{eq:time.derivative.linearity} and~\eqref{eq:time.derivative.continuity} with item~\eqref{item:lem_derivative:a_priori} and  item~\eqref{item:lem_derivative:frechet} proves item~\eqref{item:time.derivative.smoothness} and item~\eqref{item:time.derivative.representation}.
The proof of Theorem~\ref{lem:derivative_processes} is thus completed.

\end{proof}

\section*{Acknowledgements}

Stig Larsson and Christoph Schwab are gratefully acknowledged for a number of useful comments.
This project has been supported through the SNSF-Research project 200021\_156603 
``Numerical approximations of nonlinear stochastic ordinary and partial differential equations" 
and the ETH Research Grant ETH-47 15-2 
``Mild stochastic calculus and numerical approximations for nonlinear stochastic evolution equations with L\'{e}vy noise".

\bibliographystyle{acm}
\bibliography{Bib/bibfile}
\end{document}